\theoremstyle{plain}
\newtheorem*{theorem*}{Theorem}
\newtheorem{theorem}{Theorem}[section]
\newtheorem{proposition}[theorem]{Proposition}
\newtheorem{lemma}[theorem]{Lemma}
\newtheorem{corollary}[theorem]{Corollary}
\theoremstyle{definition}
\newtheorem*{definition*}{Definition}
\newtheorem{definition}[theorem]{Definition}
\newtheorem{example}[theorem]{Example}
\newtheorem{remark}[theorem]{Remark}
\crefname{lemma}{Lemma}{Lemmas}
\DeclareMathOperator*{\im}{Im}
\DeclareMathOperator*{\supp}{supp}
\newcommand{\lan}{\mathrm{Lan}}
\DeclareMathOperator*{\rk}{rk}
\newcommand{\xtwoheadrightarrow}[1]{\overset{#1}{\twoheadrightarrow}}
\newcommand{\e}{\varepsilon}
\newcommand{\RRR}{\mathbb{R}}
\newcommand{\NNN}{\mathbb{N}}
\newcommand{\kkk}{\mathbbm{k}}
\newcommand{\BB}{\mathcal{B}}
\newcommand{\CC}{\mathcal{C}}
\newcommand{\FF}{\mathcal{F}}
\newcommand{\OO}{\mathcal{O}}
\newcommand{\RR}{\mathcal{R}}
\newcommand{\PP}{\mathcal{P}}
\newcommand{\ff}{\mathscr{f}}
\newcommand{\bnu}{\bm{\nu}}
\newcommand{\rn}{\RRR^n}
\newcommand{\rp}{\RRR_{\geq 0}}
\newcommand{\rzp}{\RRR_{\geq 0}}
\newcommand{\vect}{\mathrm{\bf{vec}}}
\newcommand{\Vect}{\mathrm{\bf{Vec}}}
\newcommand{\ms}{\hs(\RR^{\bullet\to 2\bullet}(\FP))}
\newcommand{\msd}{\hs(\RR^{\delta\to 2\delta}(\FP))}
\newcommand{\msdt}{\hs(\RR^{\delta\to 2\delta}(\tFP))}
\newcommand{\msdk}{\hs(\RR^{\delta_k\to 2\delta_k}(\FP))}
\newcommand{\mshdk}{\hs(\RR^{\hat\delta_k\to 2\hat\delta_k}(\FP))}
\newcommand{\mszero}{\hzero(\RR^{\bullet\to 2\bullet}(\FP))}
\newcommand{\msone}{\hone(\RR^{\bullet\to 2\bullet}(\FP))}
\newcommand{\Rinc}{\RR^{\bullet\to 2\bullet}}
\newcommand{\oneo}{\bm{1}_0}
\newcommand{\thresh}{D}
\newcommand{\bmx}{{\bm{x}}}
\newcommand{\bmy}{\bm{y}}
\newcommand{\bmz}{\bm{z}}
\newcommand{\bmt}{\bm{t}}
\newcommand{\bmv}{\bm{v}}
\newcommand{\topc}{\mathrm{\bf{Top}}}
\newcommand{\id}{\mathrm{Id}}
\newcommand{\rhox}{\varrho_{\scriptscriptstyle X}}
\newcommand{\gr}{\mathrm{gr}}
\newcommand{\Homology}{H}
\newcommand{\hs}{\Homology_*}
\newcommand{\tff}{\tilde{\ff}}
\newcommand{\ffp}{\restr {\ff} P}
\newcommand{\tffp}{{\tilde{\ff}}|_P}
\newcommand{\hsf}{\ensuremath{\hs( \ff)}}
\newcommand{\hzero}{\Homology_0}
\newcommand{\hone}{\Homology_1}
\newcommand{\hi}{\Homology_i}
\newcommand{\honef}{\ensuremath{\hone \left( \ff\right)}}
\newcommand\restr[2]{{  \left.\kern-\nulldelimiterspace #1 \vphantom{\big|} \right|_{#2}  }}
\newcommand{\macrohsTFP}[1]{\ensuremath{\hs \left( #1(\tffp)\right)}}
\newcommand{\hsobtffp}{\macrohsTFP{\OO^{\bullet}}}
\newcommand{\hscbtffp}{\macrohsTFP{\CC^{\bullet}}}
\newcommand{\hsrbtffp}{\macrohsTFP{\RR^{\bullet\to 2\bullet}}}
\newcommand{\FP}{\ff|_P}
\newcommand{\OdFP}{\OO^{\delta}(\FP)}
\newcommand{\OeFP}{\OO^{\e}(\FP)}
\newcommand{\OddFP}{\OO^{2\delta}(\FP)}
\newcommand{\RdFP}{\RR^{\delta}(\FP)}
\newcommand{\CdFP}{\CC^{\delta}(\FP)}
\newcommand{\rprn}{\rp\times\rn}
\newcommand{\di}{d_{\mathrm{i}}}
\newcommand{\db}{d_{\mathrm{b}}}
\newcommand{\lanf}{\lan_{\iota}\hsf}
\newcommand{\lanfone}{\lan_{\iota}\honef}
\newcommand{\lanftop}{\lan_{\iota}\FF}
\newcommand{\dBo}{\db^{\bm{1}}}
\newcommand{\dBv}{\db^{\oneo}}
\newcommand{\dIo}{\di^{\bm{1}}}
\newcommand{\dIv}{\di^{\oneo}}
\newcommand{\dB}{\db^{\bmv}}
\newcommand{\dI}{\di^{\bmv}}
\newcommand{\dH}{d_{\textnormal{H}}}
\newcommand{\fp}{fp}
\newcommand{\even}{2\NNN}
\newcommand{\odd}{2\NNN+1}
\newcommand{\ind}[1]{\mathds{1}_{#1}}
\newcommand{\setprobmeas}[1]{\ensuremath{\mathcal{P}\left({#1}\right)}}
\newcommand{\abstd}[1]{\ensuremath{\mathcal{P}_{a,b}\left(#1\right)}}
\newcommand{\oracle}[0]{{\hs(\ff)}}
\newcommand{\oracleHi}[0]{{\Homology_i ( \ff)}}
\newcommand{\oracleHzero}[0]{{\Homology_0 \left( \ff\right)}}
\newcommand{\estoffsetd}[0]{\ensuremath{{\hs \left( \mathcal O^{\delta_k} \left( \restr {\ff}{X_k}\right)\right)}}}
\newcommand{\estoffsetdh}[0]{\ensuremath{{\hs \left( \mathcal O^{\hat\delta_k} \left( \restr {\ff}{X_k}\right)\right)}}}
\newcommand{\estcechd}[0]{\ensuremath{{\hs \left( {\mathcal{{C}}}^{\delta_k} \left( \restr {\ff}{X_k}\right)\right)}}}
\newcommand{\estcechdh}[0]{\ensuremath{{\hs \left({\mathcal{{C}}}^{\hat\delta_k} \left( \restr {\ff}{X_k}\right)\right)}}}
\newcommand{\estripsd}[0]{\ensuremath{{\hs \left( \mathcal R^{\delta_k\to 2\delta_k} \left( \restr {\ff}{X_k}\right)\right)}}}
\newcommand{\estripsdh}[0]{\ensuremath{{\hs \left( \mathcal R^{\hat\delta_k\to 2\hat\delta_k} \left( \restr {\ff}{X_k}\right)\right)}}}
\newcommand{\estoffsetdHi}[0]{\ensuremath{{\Homology_i \left( \mathcal O^{\delta_k} \left( \restr {\ff}{X_k}\right)\right)}}}
\newcommand{\estcechdHi}[0]{\ensuremath{{\Homology_i \left( {\mathcal{{C}}}^{\delta_k} \left( \restr {\ff}{X_k}\right)\right)}}}
\newcommand{\estripsdHi}[0]{\ensuremath{{\Homology_i \left( \mathcal R^{\delta_k\to 2\delta_k} \left( \restr {\ff}{X_k}\right)\right)}}}
\newcommand{\estgeneric}[0]{\ensuremath{\widehat{\hsf}_k}}
\newcommand{\estgenericHi}[0]{\ensuremath{\widehat{{H_i}(\ff)_k}}}
\newcommand{\estgenericHzero}[0]{\ensuremath{\widehat{H_0(\ff)}_k}}
\newcommand{\dd}{\,\mathrm{d}}
\newcommand{\setmeasurableset}[1]{\ensuremath{\mathcal{#1}}}
\newcommand{\Pbb}[0]{\ensuremath{\mathbb P}}
\newcommand{\statcompl}[1]{\ensuremath{#1^{\textnormal{c}}}}
\newcommand{\Ebb}[0]{\ensuremath{\mathbb E}}
\newcommand{\measspacetuple}[1]{\ensuremath{\left( #1, \setmeasurableset{#1}\right)}}
\newcommand{\diam}{\mathrm{diam}}
\newlist{assumptions}{enumerate}{1}
\setlist[assumptions,1]{label=\textnormal{(A\arabic*)}, ref=\textnormal{A\arabic*}}
\crefname{assumptionsi}{Assumption}{Assumptions}
\Crefname{assumptionsi}{Assumption}{Assumptions}
\definecolor{calpolypomonagreen}{rgb}{0.12, 0.3, 0.17}
\newcommand{\tFP}{\tff |_ P}
\newcommand{\MMA}{\texttt{MMA}}
\DeclareMathOperator*{\argmax}{argmax}
\DeclareMathOperator*{\argmin}{argmin}
\title[Estimating the persistent homology of $\rn$-valued functions]{Estimating the persistent homology of $\rn$-valued functions\\using function-geometric multifiltrations}
\author{Ethan André${}^1$, Jingyi Li${}^2$, David Loiseaux${}^3$, Steve Oudot${}^4$}
\address{${}^1$École Normale Supérieure, Paris, France. \url{eandre02@clipper.ens.psl.eu}}
\address{${}^2$École Polytechnique and Inria Saclay, Palaiseau, France. \url{jingyi.li@polytechnique.edu}}
\address{${}^3$Inria Saclay, Palaiseau, France. \url{david.loiseaux@inria.fr}}
\address{${}^4$École Polytechnique and Inria Saclay, Palaiseau, France. \url{steve.oudot@inria.fr}}
\begin{document}

\begin{abstract}
  Given an unknown $\rn$-valued function $\ff$ on a metric space~$X$, can we approximate the persistent homology of~$\ff$ from a finite sampling of~$X$ with known pairwise distances and function values? This question has been answered in the case $n=1$, assuming $\ff$ is Lipschitz continuous and $X$ is a sufficiently regular geodesic metric space, and using filtered geometric complexes with fixed scale parameter for the approximation.  In this paper we answer the question for arbitrary $n$, under similar assumptions and using function-geometric multifiltrations. Our analysis offers a different view on these multifiltrations by focusing on their approximation properties rather than on their stability properties. We also leverage the multiparameter setting to provide insight into the influence of the scale parameter, whose choice is central to this type of approach. From a practical standpoint, we show that our approximation results are robust to  input noise,  and that function-geometric multifiltrations have good statistical convergence properties. We also provide an algorithm to compute our estimators, and we use its implementation  to conduct extensive experiments, on both synthetic and real biological data, in order to validate our theoretical results.
\end{abstract}

\maketitle

\bibliographystyle{plain}

\section{Introduction}\label{sec:introduction}
\subsection*{Context}

Let $X$ be a metric space and $\ff\colon X\to\rn$ a function, both unknown. Suppose we are given a finite point cloud $P$ sampled from~$X$, such that the pairwise distances between the points of~$P$ are known, as well as the function values at the points of~$P$. Given this input, can we approximate the persistent homology $\hs(\ff)$ of~$\ff$ (i.e., the persistence module induced in homology from the multifiltration of $X$ by the sublevel sets of~$\ff$) with provable guarantees?
This question was addressed in~\cite{chazal2011scalar} in the case~$n=1$. The authors proposed an estimator based on a nested pair of Rips complexes $\RR^{\delta}(P)\subseteq \RR^{2\delta}(P)$, where $\delta$ is a user-defined parameter, which they filtered by the values of~$\ff$ at the vertices using lower-star filtrations. The estimator was then the image of the morphism of $1$-parameter persistence modules induced in homology by the inclusion $\RR^{\delta}(P)\hookrightarrow\RR^{2\delta}(P)$. Assuming both the domain~$X$ and the function~$\ff$ are sufficiently regular (typically, $X$ is a compact geodesic metric space with positive convexity radius~$\rhox$, and $\ff$ is $c$-Lipschitz continuous), and $P$ is an $\e$-sample of~$X$ in the geodesic distance for some small enough value of~$\e$ (specifically, $\e<\rhox/4$), they proved that the estimator is $2c\delta$-interleaved with~$\hs(\ff)$ for any choice of parameter $\delta$ within the range $[2\e, \rhox/2)$. Thus, in cases where $\e$ is known or can be estimated reliably, one can get an $O(2c\e)$-interleaving with the  target $\hs(\ff)$, hence an $O(2c\e)$-matching with its barcode, by the algebraic stability theorem~\cite{bauer2015induced,chazal2009proximity,chazal2016structure}. Under a reasonable sampling model, $\e$ goes to zero as the number of sample points goes to infinity, hence so does the approximation error, which means that the proposed estimator is consistent.
This approach has since been applied in various contexts, notably in clustering where the algorithm ToMATo~\cite{chazal2013persistence} is an instance of the method in homology degree~$0$. 
However, two fundamental questions were left open in~\cite{chazal2011scalar,chazal2013persistence}:
\begin{enumerate}
\item[Q1] how to estimate $\e$, or to choose parameter~$\delta$ directly, when $\e$ is unknown;
\item[Q2] how to generalize the approach to arbitrary~$n$, i.e., to vector-valued functions~$\ff$.
\end{enumerate}
Q2 is precisely the question we asked at the beginning. Q1 is related to the general problem of scale parameter estimation, which has received a lot of attention in statistics, including in the context of topological data analysis~(TDA). Particularly relevant to our problem in the case~$n=1$ are~\cite{bobrowski2017topological,rolle2024stable},
where deviation bounds are leveraged to provide asymptotic rules on the choice of the neighborhood size versus the sample size and function level, for estimating~$\hzero(\ff)$ when $\ff$ is a density function, and for the related problem of estimating the homology of a fixed level set of a density or regression function. 
This kind of approach departs from the general philosophy underpinning persistent homology, which is to avoid choosing the scale parameter a priori by  considering the persistent information across all scales. 
More in line with this philosophy is the work on {\em Persistable}~\cite{scoccola2023persistable}, which provides users with a tool \`a la RIVET~\cite{lesnick-wright} to investigate the structure of a bifiltration parametrized by scale and density level, in order to make an informed choice for the scale  (possibly as a function of the density level). This work is close in spirit to ours, although tied to the case~$n=1$ with a specific choice of function.

\begin{figure}[t]
    \centering
    \includegraphics[scale=0.8]{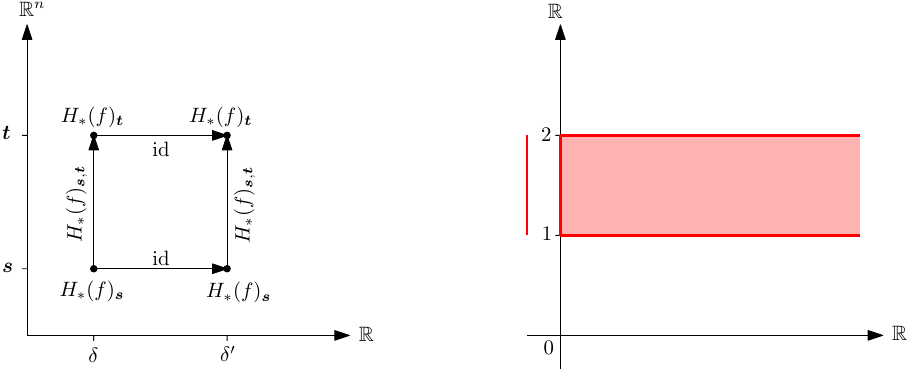}
    \caption{\textbf{Left}: an illustration of the left Kan extension $\lanf:\rprn\to\vect$, with identity maps horizontally and the structural morphisms of $\oracle$ vertically. \textbf{Right}: (case $n=1$) the left Kan extension of the interval module $\kkk^{[1,2]}$ is the interval module $\kkk^{\rp\times[1,2]}$ (in red). }
    \label{fig:example_kan_extension_R2}
\end{figure}

\begin{figure}[t]
	\centering
	\includegraphics[scale=0.25]{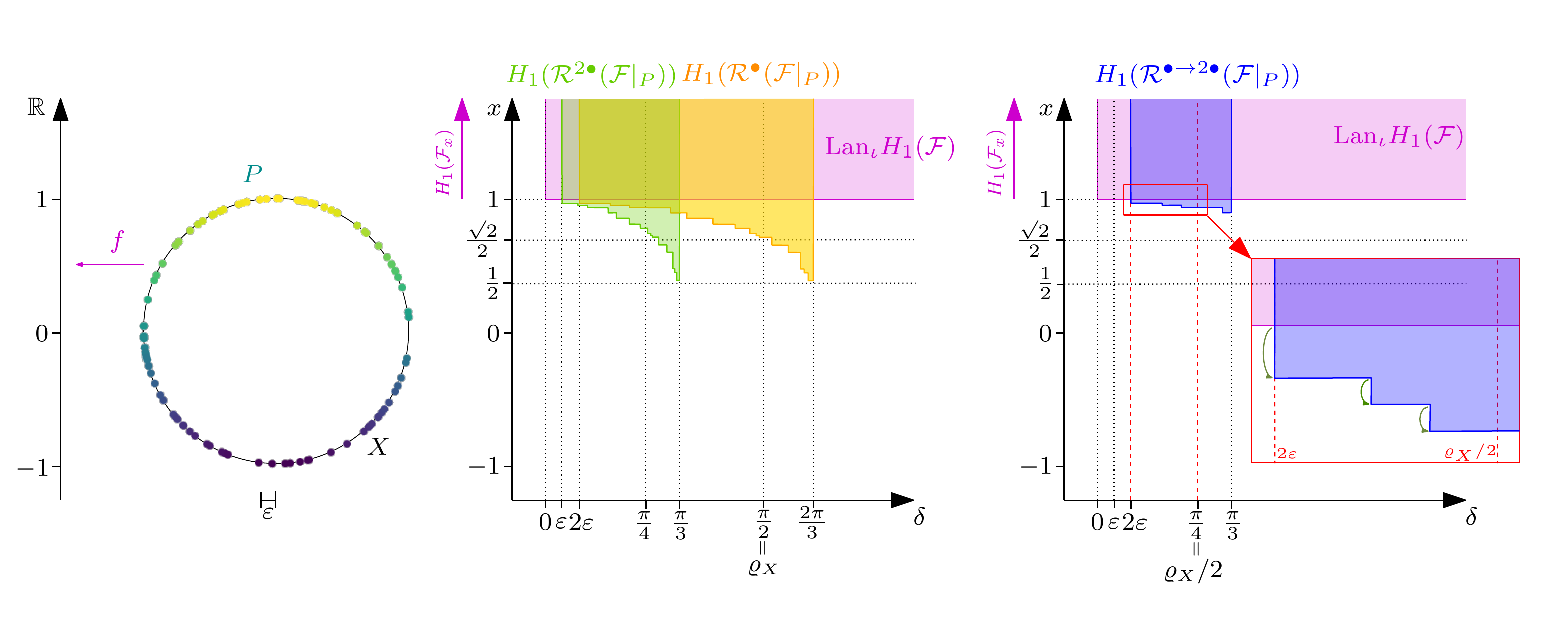}
	\caption{
         Left: the vertical height function on a sampled unit circle in the plane. Distances on the circle are given  by arclength. Center: $\hone(\ff)$ has a single interval summand (in magenta), starting at height~$1$, which extends to the free module $\lan_\iota \hone(\ff)$ generated at~$(0,1)$ in~$\rp\times\RRR$. The modules $\hone(\RR^{\bullet}(\FP))$ (in yellow) and $\hone(\RR^{2\bullet}(\FP))$ (in green) are interval modules in this simple scenario. Right: the estimator $\hone(\RR^{\bullet\to 2\bullet}(\FP))$ (in blue), which is also an interval module, approximates the target~$\lan_\iota \hone(\ff)$ in the vertical interleaving distance within any slab $[2\e, \delta_0]\times \RRR$ with $\delta_0<\rhox/2$, as per \cref{thm:estimator_varying_radius}. In turn, the vertical interleaving between the two modules implies a vertical matching between their multigraded Betti numbers within the slab (illustrated by green arrows in the close-up view), as per \cref{cor:stab_inv_rprn}.
         }
	\label{fig:example_M_from_rips}
\end{figure}

\subsection*{Contributions}
In this paper we address both Q1 and~Q2 using a unified persistence-based approach. 

For Q2 we study the extension of the estimator of~\cite{chazal2011scalar} to
vector-valued functions~$\ff\colon X\to\rn$ and generalize its approximation
guarantee to this setting. Specifically, we consider the nested pair of Rips
complexes $\RR^{\delta}(P)\subseteq \RR^{2\delta}(P)$, for a fixed~$\delta$,
which we filter by the values of~$\ff$ at the vertices using lower-star
filtrations (where the codomain~$\rn$ is equipped with the product order);
then, our estimator is the image of the morphism of $n$-parameter persistence
modules induced in homology by the inclusion
$\RR^{\delta}(P)\hookrightarrow\RR^{2\delta}(P)$ between filtered complexes. We
denote by $\msd$ this estimator. For the sake of generality we assume
$\ff$ to be $\omega$-continuous for some modulus of continuity $\omega$, that
is: $\lVert \ff(x)-\ff(y) \rVert_\infty \leq \omega(d_X(x,y))$ for any $x,y\in X$,
where $\omega\colon \rp\to\rp$ is a non-decreasing  subadditive
function such that $\omega(\delta)$ goes to~$0$ as $\delta$ does.
  This includes Lipschitz or H\"older continuous functions as special cases.
  Under this assumption, our first main result
  (Theorem~\ref{thm:estimator_fixed_radius}) states that the estimator~$\msd$
  is $\omega(2\delta)$-interleaved with its target~$\hs(\ff)$ for any choice of
  $\delta$ within the range~$[2\e, \rhox/2)$, under the same regularity
  condition on $X$ and $\e$-sampling condition on~$P$ as before. This
  generalizes the main result of~\cite{chazal2011scalar} to arbitrary~$n$. Our
  proof takes inspiration from the one in~\cite{chazal2011scalar} but uses a
  novel, purely diagrammatic formulation that clarifies it and makes it hold for any~$n\geq 1$.
  In the course of the proof we study two other related estimators:
  $\OO^{\delta}(\FP)$, which is based on the filtration of the $\delta$-offset
  of~$P$ by the sublevel sets of~$\ff$; and $\CC^{\delta}(\FP)$, which is based
  on the filtration of the $\delta$-\v{C}ech complex of~$P$ by the sublevel
  sets~of~$\ff$. 
  
\begin{figure}[tbp!]
  \centering
  \begin{subfigure}[b]{0.44\textwidth}
    \centering
    \includegraphics[width=\linewidth]{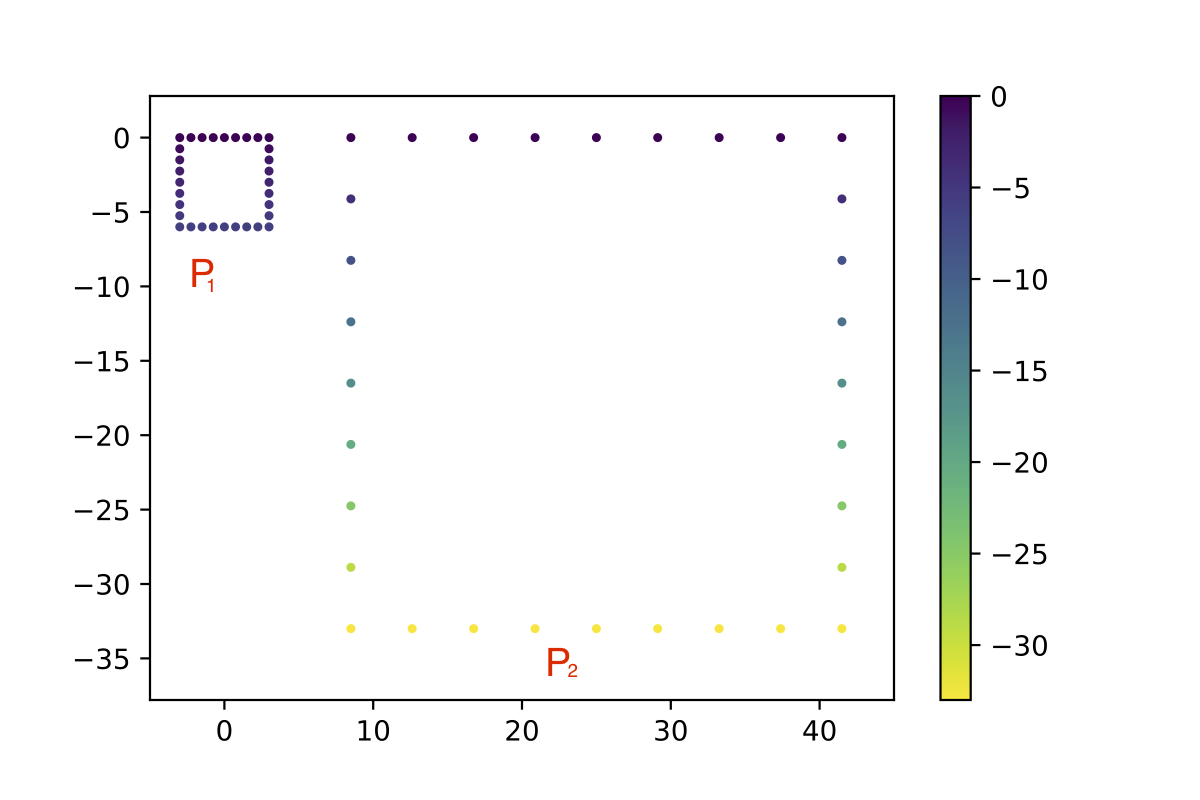}
    \caption{}\label{fig:intro_merge_int:a}
  \end{subfigure}
  \begin{subfigure}[b]{0.44\textwidth}
    \centering
    \includegraphics[width=\linewidth]{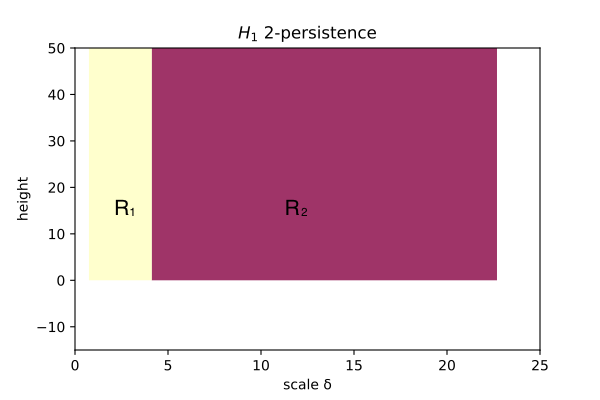}
    \caption{}\label{fig:intro_merge_int:c}
  \end{subfigure}
  \vspace{0.5cm}
  \begin{subfigure}[b]{0.44\textwidth}
    \centering
    \includegraphics[width=\linewidth]{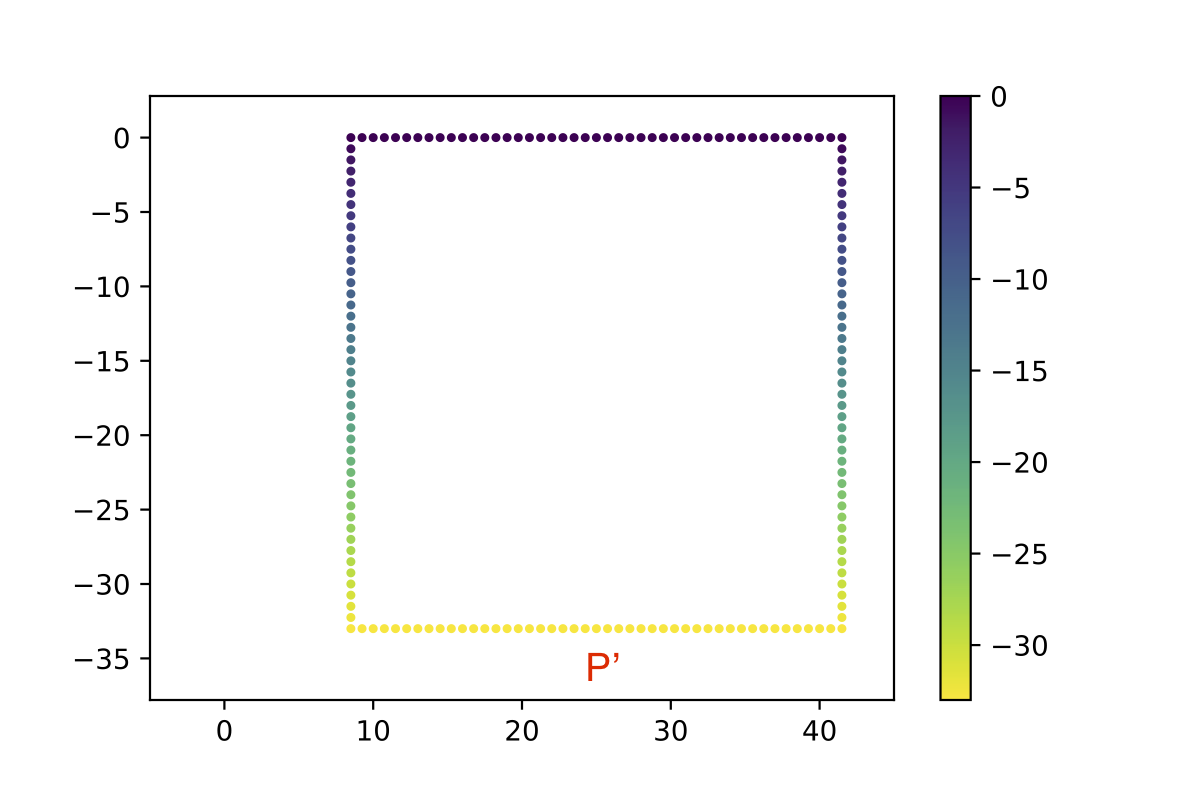}
    \caption{}\label{fig:intro_merge_int:b}
  \end{subfigure}
  \begin{subfigure}[b]{0.44\textwidth}
    \centering
    \includegraphics[width=\linewidth]{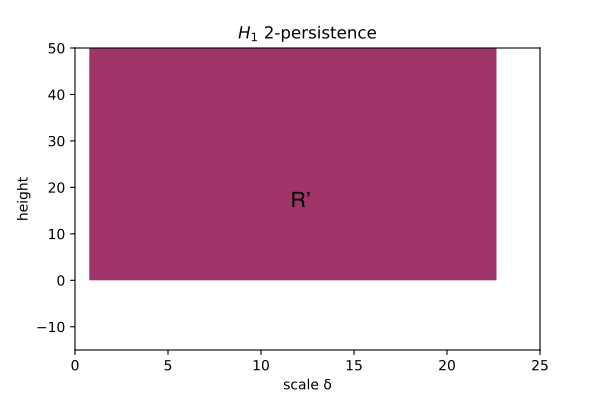}
    \caption{}\label{fig:intro_merge_int:d}
  \end{subfigure}
  \caption{Contrasting Theorem~\ref{thm:estimator_varying_radius} with Theorem~\ref{thm:estimator_fixed_radius}. \textbf{(a)}: the input is $P=P_1\sqcup P_2$,
  where $P_1$ and $P_2$ are two point clouds regularly sampled from two disjoint  squares $X_1, X_2$ in the plane. Distances within each square are shortest-path distances along the boundary, while distances between squares are infinite. Here $\ff$ is the vertical height function, and its persistent homology in degree~$1$ is considered.
  \textbf{(b)}: the estimator $\msone$ is an interval-decomposable module whose summands have half-open rectangle supports, respectively $R_1$ (in yellow) and $R_2$ (in red). The scalings of $X_1, X_2$, and their respective sampling densities, have been adjusted so that $R_1\cap R_2=\emptyset$ while $R_1\cup R_2$ is a single rectangle~$R'$ shown in subfigure~\textbf{(d)}. In turn, the interval module with support~$R'$ can be realized as the degree~$1$ persistent homology of another sample~$P'$ from a square in the plane, shown in subfigure~\textbf{(c)}.
\cref{thm:estimator_varying_radius} guarantees that $\msone$ is interleaved with $\lan_\iota\hone(\ff|_{X_1})$ over $R_1$ and with $\lan_\iota\hone(\ff|_{X_2})$ over $R_2$, leading to the two summands in~\textbf{(b)}. By contrast,  \cref{thm:estimator_fixed_radius} only guarantees interleavings within the vertical slices, which is not sufficient to discriminate the module with two summands in~\textbf{(b)} from the module with a single summand in~\textbf{(d)}. 
  }
  \label{fig:vertical_interleaving_wins_exact}
\end{figure}

For Q1 we propose two complementary approaches. The first one infers a relevant choice of~$\delta$ under an appropriate statistical model. This model assumes the points~$P$ are sampled i.i.d. according to some unkown probability distribution~$\mu$ that is supposed to be $(a,b)$-standard (\cref{def:ab_std_meas}). When parameters $a,b$ are known, we show that there is an explicit optimal choice~$\delta_k$ for parameter~$\delta$, as a function of the number~$k$ of sample points, that makes the sequence of approximations $ \left( {\msdk} \right)_{k\in \mathbb{N}}$ a {\em quasi-minimax} estimator of~$\hs(\ff)$. When $a,b$ are unknown, as would generally be the case in practice, we show that $\delta_k$ can be effectively estimated under our statistical model, so that the sequence $ \left( {\mshdk} \right)_{k\in \mathbb{N}}$ is again a quasi-minimax estimator of~$\hs(\ff)$. These statements are gathered in our second main result (\cref{thm:stat_results_overview}).

Our second approach to addressing~Q1 is meant for cases where our statistical model (or any statistical alternative) does not hold. It follows the general philosophy underpinning persistent homology by letting~$\delta$ vary over~$\rp$ and treating it as an extra parameter for persistence. This means that our estimator now becomes the $(n+1)$-parameter persistence module $\ms$ indexed over~$\rprn$. In order to enable comparisons, the target $\oracle$ must also be extended to $\rprn$, which we do by taking its left Kan extension~$\lanf$ along the poset embedding $\iota: \rn \stackrel{\cong}{\longrightarrow} \{0\}\times\rn\hookrightarrow\rprn$. The extension~$\lanf$ merely contains copies of~$\hs(\ff)$ in each vertical hyperplane~$\{\delta\}\times\rn$, connected by horizontal identities, as illustrated in Figure~\ref{fig:example_kan_extension_R2}. 
Our third main result (Theorem~\ref{thm:estimator_varying_radius}) states that the $n$-dimensional interleavings happening between $\hs(\ff)$ and  $\msd$ inside each vertical hyperplane $\{\delta\}\times\rn$ for $\delta\in [2\e, \rhox/2)$ commute with the horizontal morphisms in $\ms$ and in $\lanf$, so that, within any vertical slab~$[2\e,\delta_0]\times\rn$ with $\delta_0<\rhox/2$, they all together form a {\em vertical $\omega(2\delta_0)$-interleaving} (i.e., a $\omega(2\delta_0)$-interleaving along the direction $(0,1,\cdots, 1)^\top$ in $\rprn$) between $\ms$ and $\lanf$. 
This property, which is stronger than having an ordinary interleaving (Remark~\ref{rem:vert_wins}), intuitively implies that prominent features in $\oracle$ are not just present in $\msd$ at individual scales~$\delta$ in the range $[2\e, \delta_0]$, but also persist across large ranges of scales in $\ms$. In particular, they are not ephemeral in~$\rprn$ and should be detected by fine enough invariants of $(n+1)$-parameter modules. Thus, instead of probing $\rp$ in search for a good value for~$\delta$ as in~\cite{chazal2011scalar,chazal2013persistence}, we can now use the structure of the $(n+1)$-parameter module~$\ms$ to identify a range of  relevant values for parameter~$\delta$.  
  See Figure~\ref{fig:example_M_from_rips} for an illustration, and Figure~\ref{fig:vertical_interleaving_wins_exact} for a comparison of Theorem~\ref{thm:estimator_varying_radius} with Theorem~\ref{thm:estimator_fixed_radius}.

Since the vertical interleaving between $\ms$ and $\lanf$ implies an ordinary interleaving, it induces guarantees on the approximation of any interleaving-stable invariant of $\lanf$ by the corresponding invariant of $\ms$. Among these invariants, some behave particularly nicely with respect to vertical interleavings.  We illustrate this with multigraded Betti numbers, showing that the vertical interleaving between  $\ms$ and $\lanf$ within a slab leads to a vertical bottleneck matching between their multigraded Betti numbers within that slab (Corollary~\ref{cor:stab_inv_rprn}), as can be seen in the close-up view in Figure~\ref{fig:example_M_from_rips}.

In practice, function values may be corrupted with measurement noise, meanwhile geodesic distances may have to be approximated from the input data. Our framework accounts for these imprecisions by suitably adapting the parameters of the estimators and their approximation bounds (\Cref{prop:estimator_noise_fixed_radius,prop:estimator_rips_noise_distance_fixed_radius,prop:estimator_noise_varying_radius,prop:estimator_rips_noise_distance_varying_radius}).

Still for practical purposes, we provide an algorithm (\Cref{alg:compute_presentation_imfg}) for computing a free presentation of $\ms$, and thus also of $\msd$ for any fixed $\delta\geq 0$, in time that is comparable to that of computing a free presentation of the persistent homology of a single multifiltration. From a free presentation, a variety of invariants for $\ms$ and $\msd$, including their multigraded Betti numbers, can be derived using existing software. We have implemented our algorithm in the \texttt{multipers} library~\cite{multipers} and used it in our experiments.

\subsection*{Connection to function-geometric multifiltrations}
Our two main estimators derive in a certain way from the \emph{function-Rips multifiltration}~$\RR^{\bullet}(\FP)$. Specifically, $\ms$ is a {\em horizontal smoothing} (i.e., a smoothing in the sense of~\cite{chazal2016structure} along the first coordinate axis in~$\rprn$) of~$\hs(\RR^{\bullet}(\FP))$, while $\msd$ is the restriction of that smoothing to some fixed vertical hyperplane $\{\delta\}\times\rn$.  Similarly, the estimators $\OO^{\delta}(\FP)$ and $\CC^{\delta}(\FP)$ are restrictions, to that same hyperplane, of the {\em function-offset multifiltration}~$\OO^{\bullet}(\FP)$ and {\em function-\v{C}ech multifiltration}~$\CC^{\bullet}(\FP)$, respectively. As such, all the estimators considered in the paper are based on what we designate as {\em function-geometric multifiltrations}, which combine geometric filtrations with sublevel sets of functions. The introduction of $\CC^{\bullet}(\FP)$ and $\RR^{\bullet}(\FP)$ dates back to the first TDA paper on multiparameter persistence~\cite{carlsson2007theory}, but the line of work on function-geometric multifiltrations really started a few years ago, with the study of their stability under perturbations of the input and in particular under the presence of outliers. Several provably stable function-geometric multifiltrations have been introduced, including the \emph{multicover} bifiltration, the \emph{subdivision-\v{C}ech} and \emph{subdivision-Rips} bifiltrations, the \emph{degree-\v{C}ech} and \emph{degree-Rips} bifiltrations. The related work focuses on their stability, computation, or approximation by sparser filtrations on finite point clouds---see~\cite{alonso2024sparse,alonso2024delaunay,blumberg2024stability,buchet2024sparse,edelsbrunner2021multi,hellmer2024density,lesnick2024nerve,lesnick2024sparse,lesnick-wright,rolle2024stable,sheehy2012multicover}.  Our work offers a different perspective on function-geometric multifiltrations, by studying their convergence properties as estimators of a filtered metric space~$(X, \ff)$.
Our work also goes beyond the case~$n=1$, considering functions~$\ff$ valued in~$\rn$ for arbitrary~$n$. 

\subsection*{Structure of the paper}
We provide background material in \cref{sec:background}. Our problem statement, estimators, and main results are detailed in \cref{sec:results}.
Their proofs are given in
\cref{sec:fix_radius,sec:vary_radius,sec:stat}.  In \cref{sec:estim_comput} we provide our algorithm for computing presentations of images of morphisms between finitely presented persistence modules.
Finally, in \cref{sec:experiments} we present experimental results that illustrate our theoretical guarantees and we investigate additional properties of our estimators.

\section{Background}\label{sec:background}
We assume some familiarity with basic category theory,  algebraic topology, measure theory, and topological data
analysis.  Let $\topc$ denote the category of topological spaces, and $\vect$ (resp. $\Vect$) the category of finite-dimensional (resp. all) vector spaces over a fixed field~$\kkk$. We also fix an arbitrary degree in homology, denoted
by~$*$, and we write $H_*(-)$ for singular homology groups in degree~$*$ with coefficients in~$\kkk$.

\subsection*{Filtrations and persistence modules}
We see $\rn$ as the product of $n$ copies of the real line, equipped with the product order noted~$\leq$. Thus, two
points $\bmx, \bmy\in\rn$ satisfy $\bmx\leq\bmy$ whenever $\bmx_i\leq\bmy_i$ for all $1\leq i\leq n$. We denote by
$(-\infty, \bmx]$  the downset $\{\bmy\in\rn \mid \bmy\leq\bmx\}$, and by $[\bmx, +\infty)$ the upset $\{\bmy\in\rn
\mid \bmy\geq\bmx\}$.
An \emph{$n$-parameter filtration} is a functor $\FF:\rn\to\topc$ whose constituent maps are inclusions, which means
that we have a topological space $\FF_{\bmx}$ for all $\bmx\in\rn$ and an inclusion $\FF_{\bmx, \bmy}\colon
\FF_{\bmx}\hookrightarrow\FF_{\bmy}$ for all $\bmx\leq\bmy\in\rn$.
An \emph{$n$-parameter persistence module} is a functor $M:\rn\to\Vect$, which
means that we have a $\kkk$-vector space~$M_{\bmx}$ for all $\bmx\in\rn$ and a
$\kkk$-linear map $M_{\bmx, \bmy}\colon M_{\bmx}\to M_{\bmy}$ for all
$\bmx\leq\bmy\in\rn$, such that $M_{\bmx, \bmx} = \id_{M_{\bmx}}$ and
$M_{\bmx,\bmz}= M_{\bmy,\bmz}\circ M_{\bmx,\bmy}$ for all
$\bmx\leq\bmy\leq\bmz\in\rn$.
{A {\em morphism} of persistence modules $M\to N$ is a natural
	transformation between functors, i.e., a family of linear maps
	$\varphi_{\bmx}\colon M_{\bmx}\to N_{\bmx}$ such that $N_{\bmx,\bmy} \circ
	\varphi_{\bmx} = \varphi_{\bmy}\circ M_{\bmx,\bmy}$ for all $\bmx\leq
\bmy\in\rn$.}

\subsection*{Geodesic metric spaces}\label{def:geodesic_space}
Throughout the paper, unless otherwise specified, $(X, d_X)$ is a compact
geodesic metric space. This means that, for all $x,y\in X$, there is a
shortest continuous path from~$x$ to~$y$ in~$X$, of length equal
to~$d_X(x,y)<\infty$.
In particular, the space is path-connected.
Let $P$ be a finite set of points of $X$. Given $\varepsilon>0$, we say
$P$ is a \emph{geodesic $\varepsilon$-sample} of $X$ if $\dH(P,X)<\e$, where $\dH$ denotes the Hausdorff distance in
$(X,d_X)$.
Define $B_X(x, r)$ as the open geodesic ball centered at $x\in X$ with radius
$r$, given by $B_X(x, r) = \{x' \in X \mid d_X(x, x') < r\}.$
A ball $B_X(x, r)$ is said to be \emph{convex} if, for any pair of points $y$ and $y'$ in $B_X(x, r)$, the shortest path in $X$ that connects $y$ to $y'$ is
unique and included in $B_X(x,r)$. Let
$\varrho(x):=\inf\{r>0\mid \text{$B(x,r)$ is not convex}\}$, and let $\rhox:=\inf\{\varrho(x) \mid x\in X \}\geq 0$. This quantity, called the  \emph{convexity radius} of~$X$, plays an important role because intersections of convex sets are convex and therefore contractible (see~\cref{sec:contractible_proof}), so the Nerve
Lemma~\cite[Corollary~4G.3]{hatcher} applies to unions of balls of radii less than~$\rhox$.

\subsection*{Modulus of continuity}\label{def:modulus_continuity}
A \emph{modulus of continuity} is a  non-decreasing  sub-additive
function $\omega:\rp\to\rp$ such that
$\omega(\delta)\xrightarrow[\delta\to 0]{}0$.
We say a function $\ff:
X\to \rn$ admits $\omega$ as a modulus of continuity, or that $\ff$ is $\omega$-continuous, if
\begin{equation}
	\left\lVert \ff(x) - \ff(y) \right\rVert_{\infty} \le \omega \left( d_X(x,y)\right), \quad \text{ for all $x,y\in X$.}
\end{equation}
If $\omega$ is of the form $\omega(x)=c\, x$ for a fixed constant $c\in\rp$, then a function $\ff$ that admits $\omega$
as a modulus of continuity is  $c$-Lipschitz continuous. More generally, if $\omega(x)=c\, x^\alpha$ for constants
$c\in\rp$ and $\alpha\in\RRR_{>0}$, then  $\ff$ is H\"older continuous of order~$\alpha$.

\subsection*{Functional and geometric filtrations}
Given a geodesic metric space $(X, d_X)$ and a function $\ff \colon X\to\rn$, the \emph{sublevel filtration}
$\FF:\rn\to\topc$ of $\ff$ is defined by $\FF_{\bmx}:=\ff^{-1}((-\infty,\bmx])$ for all $\bmx\in\rn$. The {\em
persistent homology} of~$\FF$ (and, by extension, of~$\ff$) is the persistence module
$\hs(\FF)$ (also written $\hs(\ff)$) defined by $\hs(\FF)_{\bmx} :=
\hs(\FF_{\bmx})$ and
$\hs(\FF)_{\bmx,\bmy} := \hs(\FF_{\bmx, \bmy})$ for all
$\bmx\leq\bmy\in\rn$.
When $\hs(\FF_{\bmx})$
	is finite-dimensional for all $\bmx\in\rn$, we say that $\hs(\ff)$
is \emph{pointwise finite-dimensional} (pfd).  

Given a finite set $P$ of points in X, and a real parameter $\delta \geq 0$, we call \emph{$\delta$-offset} of $P$ the
union of open geodesic balls $\OO^{\delta}(P):=\bigcup_{p\in P}B_X(p,\delta)$. We call \emph{$\delta$-\v{C}ech Complex}
of~$P$ the nerve of the collection of balls $\{B_X(p,\delta) \mid p\in P\}$, defined as the abstract simplicial complex
$\CC^{\delta}(P):=\{ \emptyset\neq \sigma\subseteq P \mid \bigcap_{p\in\sigma} B_X(p,\delta) \neq \emptyset\}$. For
$\delta<\rhox$, the balls $B_X(p,\delta)$ and their intersections are either empty or convex, so the Nerve
Lemma~\cite[Corollary~4G.3]{hatcher} ensures that $\OO^{\delta}(P)$ and $\CC^{\delta}(P)$ are homotopy equivalent,
which implies that their homology groups $\hs(\OO^{\delta}(P))$ and $\hs(\CC^{\delta}(P))$ are isomorphic. The
following persistent version of the Nerve Lemma ensures that the isomorphism is natural in both~$\delta$ and~$P$:
\begin{lemma}[\cite{chazal2011scalar}]
	\label{lem:nerve_theorem_commute}
	Let $P\subseteq P'$ be finite sets of points in a geodesic metric space~$(X, d_X)$. For any $\delta \leq \delta' <
	\rhox$, the following square commutes, where the isomorphisms are the ones provided by the Nerve
	Lemma~\cite[Corollary~4G.3]{hatcher}, and where the other two arrows are induced in homology by inclusions at the
	topological level:
	\[
		\begin{tikzcd}
			\hs(\CC^\delta(P)) \arrow[r] \arrow[d,"\cong"] & \hs(\CC^{\delta'}(P')) \arrow[d,"\cong"] \\
			\hs(\OO^{\delta}(P)) \arrow[r] & \hs(\OO^{\delta'}(P')).
		\end{tikzcd}
	\]
\end{lemma}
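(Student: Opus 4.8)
The plan is to invoke the classical functorial Nerve Lemma and then carefully check that the naturality it provides is exactly what makes the square commute. Recall that for $\delta < \rhox$ the cover $\mathcal{U}^\delta(P) := \{B_X(p,\delta) \mid p \in P\}$ is a good cover: every nonempty finite intersection of its members is convex, hence contractible. Under an inclusion $P \subseteq P'$ and a parameter increase $\delta \leq \delta' < \rhox$, there is an evident map of covers $\mathcal{U}^\delta(P) \to \mathcal{U}^{\delta'}(P')$ in the sense that each $B_X(p,\delta)$ is included in $B_X(p,\delta')$, and this is compatible with the inclusion on index sets $P \hookrightarrow P'$. First I would record that this datum induces, on the one hand, the simplicial map $\CC^\delta(P) \hookrightarrow \CC^{\delta'}(P')$ (a map is well-defined because if $\bigcap_{p\in\sigma} B_X(p,\delta) \neq \emptyset$ then a fortiori $\bigcap_{p\in\sigma} B_X(p,\delta') \neq \emptyset$), and on the other hand the topological inclusion $\OO^\delta(P) \hookrightarrow \OO^{\delta'}(P')$ of the unions of the covers.

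Next I would appeal to a functorial version of the Nerve Lemma --- the one behind \cite[Corollary~4G.3]{hatcher}, or more precisely the statement that for a good cover the canonical map from the nerve to the union (or the union to the nerve, depending on the direction one sets up) is a homotopy equivalence, and that this equivalence can be realized naturally with respect to maps of good covers. Concretely, one builds the Mayer--Vietoris blowup (homotopy colimit) $\|\mathcal{U}^\delta(P)\|$, which receives a natural map to $\OO^\delta(P)$ (a homotopy equivalence because all the intersections are contractible, in fact convex hence a genuine deformation retraction datum is available) and a natural map to the geometric realization of $\CC^\delta(P)$ (a homotopy equivalence because the pieces indexed by a simplex are contractible). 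Both of these constructions are functorial in the map of covers $\mathcal{U}^\delta(P) \to \mathcal{U}^{\delta'}(P')$, so applying $\hs(-)$ yields a commuting diagram
\[
\begin{tikzcd}
\hs(\CC^\delta(P)) \arrow[d] & \hs(\|\mathcal{U}^\delta(P)\|) \arrow[l,"\cong"'] \arrow[r,"\cong"] \arrow[d] & \hs(\OO^\delta(P)) \arrow[d] \\
\hs(\CC^{\delta'}(P')) & \hs(\|\mathcal{U}^{\delta'}(P')\|) \arrow[l,"\cong"] \arrow[r,"\cong"'] & \hs(\OO^{\delta'}(P')),
\end{tikzcd}
\]
and composing the isomorphisms along each row gives exactly the natural isomorphism $\hs(\CC^\bullet(\cdot)) \cong \hs(\OO^\bullet(\cdot))$ whose commutativity is asserted. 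The outer square of this diagram is precisely the claimed square once one identifies the composite isomorphisms with ``the ones provided by the Nerve Lemma''.

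I would then tie up the hypotheses: the condition $\delta' < \rhox$ (together with $\delta \leq \delta'$, so also $\delta < \rhox$) is exactly what guarantees that \emph{both} covers $\mathcal{U}^\delta(P)$ and $\mathcal{U}^{\delta'}(P')$ are good --- every finite intersection of balls of radius less than $\rhox$ is convex or empty, using the stated fact that intersections of convex sets are convex and convex sets are contractible --- so the two horizontal pairs of arrows are genuinely isomorphisms, and the blowup comparison maps are genuinely defined and natural.

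The main obstacle is purely bookkeeping: asserting that the homotopy equivalences in the Nerve Lemma can be chosen \emph{naturally} in the map of covers, rather than merely existing pointwise. The cleanest way around this is to not try to make the maps $\|\mathcal{U}^\delta(P)\| \to \CC^\delta(P)$ and $\|\mathcal{U}^\delta(P)\| \to \OO^\delta(P)$ ``the standard Nerve Lemma maps'' by hand, but instead to observe that the Mayer--Vietoris blowup is a visibly functorial construction on the poset of (pairs consisting of a finite point set and an admissible radius), so functoriality of the two comparison maps is automatic, and the only nontrivial input is that they are objectwise homotopy equivalences --- which is the classical Nerve Lemma plus contractibility of convex sets and their intersections. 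One should also double-check that the map induced on nerves by a map of covers indexed compatibly over $P \hookrightarrow P'$ coincides, up to the canonical identifications, with the simplicial inclusion $\CC^\delta(P) \hookrightarrow \CC^{\delta'}(P')$, and similarly on unions; this is immediate from unwinding definitions. Since the statement is attributed to \cite{chazal2011scalar}, an acceptable alternative is simply to cite that reference and sketch only the blowup argument for completeness.
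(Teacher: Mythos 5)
The paper does not supply its own proof of this lemma---it is stated with a citation to \cite{chazal2011scalar} and used as a black box---so there is no internal argument to compare against. Your blowup approach is the standard and correct way to establish the persistent/functorial Nerve Lemma, and it is essentially the argument underlying the cited result: replace both the nerve and the union by the Mayer--Vietoris blowup $\|\mathcal{U}^\delta(P)\|$, observe that the blowup and its two comparison maps are manifestly functorial in the pair $(P,\delta)$, check that the induced maps on nerves and on unions are the simplicial inclusion and the topological inclusion respectively, and conclude by two applications of the (non-persistent) Nerve Lemma. Your identification of the role of the hypothesis $\delta' < \rhox$ is also correct: it makes both covers good, which is exactly what is needed.

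One small imprecision worth fixing: you justify the equivalence $\|\mathcal{U}^\delta(P)\| \to \OO^\delta(P)$ by appealing to contractibility of the intersections, but that projection is a homotopy equivalence for \emph{any} open (more generally, numerable) cover of a paracompact space---no goodness of the cover is required; here $\OO^\delta(P)$ is metrizable so the partition-of-unity argument applies. It is the other leg, $\|\mathcal{U}^\delta(P)\| \to |\CC^\delta(P)|$, that genuinely needs all nonempty finite intersections to be contractible, i.e., needs $\delta < \rhox$. You do state the correct reason for the nerve leg, so this is a matter of attributing the hypothesis to the right arrow rather than a gap in the argument. Also, when you claim that the composite of your two blowup isomorphisms ``is'' the isomorphism from Hatcher's Corollary~4G.3, that is true up to the canonical identifications but is exactly the sort of routine check you flag at the end; stating it as a one-line compatibility observation (that Hatcher's map factors through the blowup) would make the identification airtight.
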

We call \emph{$\delta$-Rips complex} of~$P$ the abstract simplicial complex $\RR^{\delta}(P):= \{
\emptyset\neq\sigma\subseteq P \mid \max_{p,q\in\sigma} d_X(p,q) < \delta\}$.
\v{C}ech and Rips complexes of~$P$ are related as follows---see e.g.~\cite{co-tpbr-08}:
\begin{equation}\label{equ:include_cech_rips}
	\CC^{\delta}(P) \subseteq \RR^{2\delta}(P) \subseteq \CC^{2\delta}(P), \text{ for all $\delta\geq 0$}.
\end{equation}
Varying parameter~$\delta$ from $0$ to $+\infty$ yields respectively the {\em offset
filtration} $\OO^{\bullet}(P) := \left\{\OO^\delta(P)\right\}_{\delta\in\rp}$,
the {\em \v Cech filtration} $\CC^{\bullet}(P) :=
\left\{\CC^\delta(P)\right\}_{\delta\in\rp}$, and the {\em Rips filtration}
$\RR^{\bullet}(P) := \left\{\RR^\delta(P)\right\}_{\delta\in\rp}$.

\subsection*{Interval-decomposable and finitely presentable modules}

A non-empty subset $J\subseteq \rn$ is called an \emph{interval}  if it is convex and connected with respect to the
product order on~$\rn$.
The corresponding {\em interval module} $\kkk^J\colon:\rn\to\vect$ with support~$J$ is defined by:
\[
	\kkk^J_{\bmx} =
	\begin{cases}
		\kkk  &  \text{if}\ \bmx \in J  \\
		0   &  \text{otherwise}
	\end{cases}
	\quad \quad \quad
	\kkk^J_{\bmx, \bmy} =
	\begin{cases}
		\id_{\kkk}  &  \text{if}\ \bmx \leq \bmy \in J  \\
		0   &  \text{otherwise.}
	\end{cases}
\]
Given a multiset $\mathcal{D}$ of intervals in~$\rn$, we use the shorthand $\kkk^{\mathcal{D}}$ for the direct sum
$\bigoplus_{J\in\mathcal{D}}\kkk^J$.  A persistence module $M:\rn\to\vect$ is called \emph{interval-decomposable} if
$M\cong \kkk^{\BB(M)}$ for some multiset $\BB(M)$ of intervals in $\rn$: this multiset is then
unique~\cite{botnan2020decomposition}
and called the \emph{barcode}~of~$M$.

In the paper we call \emph{free interval} any upset $[\bmx,+\infty)$.
A module $M$ is free if it is interval-decomposable and its barcode $\BB(M)$ is made exclusively of free intervals.
Then, the \emph{rank} of~$M$ is the size of~$\BB(M)$. A persistence module $M$ is \emph{finitely presentable}~(\fp{})
if it is isomorphic to the cokernel of a morphism between two free $\rn$-indexed persistence modules of finite rank.
Such a morphism is referred to as a \emph{free presentation} of $M$.

In the following we will sometimes consider restrictions of modules or barcodes to {\em vertical slabs}, which are
intervals of the form $[a,b]\times\RRR^{n-1}$ for some $a\leq b\in\RRR$. When $A\subseteq\rn$ is a vertical slab, any
interval in~$\rn$ restricts to an interval in~$A$.

\subsection*{Morphisms of free modules and multigraded matrices}
We use the terminology and notations in \cite{lesnick2022computing} for multigraded matrices.
Let $M:\rn\to\vect$ be a persistence module. We say a non-zero element $v\in M$ is \emph{homogeneous} if $v\in
M_{\bmz}$ for some $\bmz\in\rn$. In this case, we write $\gr(v)=\bmz$. We define a \emph{basis} for a free persistence
module $F$ to be a minimal homogeneous set of generators.

Suppose we are given a basis $B=(b_j)_j$ for a finitely generated free persistence module $F$, with basis elements
ordered arbitrarily. We denote by $b_j$ the $j^{\text{th}}$ basis element of $B$. For $\bmz\in\rn$, any element $v\in
F_{\bmz}$ can be written as a $\kkk$-linear combination of basis elements with grades less than or equal to~$\bmz$:
\begin{equation}
	v = \sum\limits_{j:\gr(b_j)\leq\bmz} [v]^B_j F_{\gr(b_j),\bmz}(b_j).
\end{equation}
We write $[v]^B=([v]^B_j)_{1\leq j\leq |B|}$ for the vector of field coefficients completed with zeros at indices~$j$
for which $\gr(b_j)\nleq\bmz$.

Let $F'$ be another finitely generated free persistence module, and $B'=(b'_i)_i$ a basis for~$F'$ whose elements are
also ordered arbitrarily. A morphism $\gamma: F\to F'$ can then be represented up to natural isomorphism by a
$|B'|\times|B|$ matrix~$\Gamma$ with coefficients in~$\kkk$,
whose rows and columns are labeled by grades in~$\rn$ as follows:
\begin{itemize}
	\item the label of the $j^{\text{th}}$ column is $\gr(b_j)$,
	\item the label of the $i^{\text{th}}$ row is $\gr(b'_i)$.
\end{itemize}
This way, given some implicit choice of bases that we omit in our notations, any finite free presentation of a \fp{} persistence module is represented by a multigraded matrix $\Gamma$. We denote the $j^{\text{th}}$ column of~$\Gamma$ by $\Gamma[*,j]$, with label~$\gr(\Gamma[*,j])$, and we denote the $i^{\text{th}}$ row of~$\Gamma$ by $\Gamma[i,*]$, with label~$\gr(\Gamma[i,*])$.

\subsection*{Interleavings}

Let $\bmv\in\rzp^n$ be a fixed vector, and let $\varepsilon\geq 0$.
Given any functors $F,G:\rn\to\CC$ to some category~$\CC$, we say $F$ and $G$ are
\emph{$\varepsilon\bm{v}$-interleaved} if there are two natural transformations $f:F\to G[\varepsilon\bmv]$ and $g:G\to
F[\varepsilon\bmv]$ such that $g[\e\bmv]\circ f = \varphi^{2\varepsilon\bmv}_{F}$ and $f[\e\bmv]\circ g =
\varphi^{2\e\bmv}_{G}$. Here, $-[\e\bmv]$ denotes the $\e\bmv$-shift functor, defined on objects by $F[\e\bmv]_{\bmx} :=
F_{\bmx+\e\bmv}$ and on morphisms by $F[\e\bmv]_{\bmx, \bmy} := F_{\bmx+\e\bmv, \bmy+\e\bmv}$ for all
$\bmx\leq\bmy\in\rn$. Meanwhile, $\varphi^{2\e\bmv}_{F}\colon F\to F[2\e\bmv]$ and $\varphi^{2\e\bmv}_{G}\colon G\to
G[2\e\bmv]$ denote the natural morphisms from $F$ and $G$ to their respective $2\e\bmv$-shifts, defined by
$(\varphi^{2\e\bmv}_{F})_{\bmx} := F_{\bmx, \bmx+2\e\bmv}$ and $(\varphi^{2\e\bmv}_{G})_{\bmx} := G_{\bmx,
\bmx+2\e\bmv}$ for all $\bmx\in\rn$. The \emph{$\bmv$-interleaving distance} is defined as follows:
\[\dI(F,G) = \inf\{ \varepsilon \geq 0 \mid \text{  there exists an $\varepsilon\bmv$-interleaving between $F$ and
$G$}\}\in[0,+\infty].\]
In the following, we consider two specific types of interleavings:
\begin{enumerate}
	\item \textbf{Ordinary interleavings}:  Let
		$\bmv=\bm{1}:=(1,\cdots,1)^T$. Then, $\e\bm{1}$-interleavings are just ordinary $\varepsilon$-interleavings from the
		TDA literature,
  and $\dIo$ is called the \emph{ordinary interleaving distance}.
	\item \textbf{Vertical interleavings}: Let
		$\bmv=\bm{1}_0:= (0,1,\cdots,1)^T.$ Then, $\e\bm{1}_0$-interleavings are called \emph{vertical
		$\varepsilon$-interleavings}, and $\dIv$ is called the \emph{vertical interleaving distance}.
\end{enumerate}

\begin{remark}\label{rem:vert_wins}
	Vertical interleaving implies ordinary interleaving. Indeed, the vertical interleaving maps can be composed with the
	horizontal structure morphisms of the modules to form an ordinary interleaving. The amplitude~$\e$ of the interleaving
	does not change in the process. Therefore, vertical interleaving is a stronger condition than ordinary interleaving.
\end{remark}

\subsection*{Bottleneck matchings}
Let $\BB, \CC$ be multisets of intervals in $\rn$. Let $\bmv\in\rzp^n$ and $\e\geq 0$. An \emph{$\e\bmv$-bottleneck
matching} between $\BB$ and $\CC$ consists of a bijection $h:\BB'\to\CC'$ for some $\BB'\subseteq \BB$ and $\CC'
\subseteq \CC$ such that:
\begin{itemize}
	\item for all $I\in\BB'$, the interval modules $\kkk^I$ and $\kkk^{h(I)}$ are $\e\bmv$-interleaved;
	\item for all $I\in\BB\backslash \BB'\sqcup \CC\backslash \CC'$, the interval module $\kkk^I$ is $\e\bmv$-interleaved
		with the zero module.
\end{itemize}
The \emph{$\bmv$-bottleneck distance} is defined as follows:
\[
	\dB(\BB,\CC)=\inf\{ \e\geq 0\mid \text{there exists an $\e\bmv$-bottleneck matching between $\BB$ and
	$\CC$}\}\in[0,+\infty].
\]
In the following, we call \emph{ordinary $\varepsilon$-bottleneck matching} an $\e\bmv$-bottleneck matching with
$\bmv=\bm{1}$, and \emph{vertical $\e$-bottleneck matching} an $\e\bmv$-bottleneck matching with $\bmv=\oneo$.

\subsection*{Multigraded Betti numbers}

A \fp{} persistence module over $\rn$ is projective if and only if it is free. A \emph{projective resolution} of
a fp module~$M$ is an exact sequence \( \cdots \xrightarrow{\partial_3} P_2 \xrightarrow{\partial_2} P_1
\xrightarrow{\partial_1} P_0 \twoheadrightarrow M\), denoted by $P_\bullet\twoheadrightarrow M$,  where each $P_i$ is
fp and projective (hence free).
The resolution is \emph{minimal} if, for each homological degree $ i \in \NNN$, the free module \( P_i \) is of minimal
rank among all possible $i$-th terms in any projective resolution of $M$. The minimal projective resolution, when it
exists, is unique up to isomorphism, which implies that the barcode of each term $P_i$ is uniquely defined; this
barcode is called the \emph{multigraded Betti number} of $M$ in homological degree $i$, denoted by $ \beta_i(M)$. By
Hilbert's syzygy theorem, every \fp{} $\rn$-persistence module has a minimal projective
resolution~$P_\bullet\twoheadrightarrow M$ of  \emph{length}  at most~$n$, where the length is defined as the largest
$i \in \mathbb{N}$ such that $P_i \neq 0$. We let $\beta_{2\NNN}(M):=\bigsqcup_{i\in 2\NNN} \beta_i(M)$ and
$\beta_{2\NNN+1}(M):=\bigsqcup_{i\in 2\NNN+1} \beta_i(M)$.

The stability of mutigraded Betti numbers has been established as follows in the ordinary interleaving and bottleneck
distances:
\begin{theorem}[\cite{oudot2024stability}]
	\label{thm:stability_betti}
 For any \fp{} modules $M,N:\rn\to\vect$:
	\[
		\dBo(\beta_{\even}(M)\sqcup \beta_{\odd}(N),\beta_{\even} (N)\sqcup \beta_{\odd}(M))\leq
		\begin{cases}
			(n^2-1)\dIo(M,N), & \text{if $n>1$},\\
			2\dIo(M,N), & \text{if $n=1$}.
		\end{cases}
	\]
\end{theorem}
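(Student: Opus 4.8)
The plan is to reduce stability of multigraded Betti numbers to the already-available stability of rank decompositions (\Cref{thm:stability_rank-decomp}), exploiting that the ``signed Betti function'' $\beta_{\even}(M)-\beta_{\odd}(M)$ is recoverable from the rank invariant of $M$ via a Koszul/M\"obius-inversion formula; the case $n=1$ is handled separately and more cheaply through the classical isometry theorem. For $n=1$, observe that $\beta_0(M)$ and $\beta_1(M)$ are exactly the multisets of left and, respectively, right endpoints of the bars of $\BB(M)$. Given $\e>\dIo(M,N)$, algebraic stability~\cite{bauer2015induced,chazal2016structure} yields an $\e$-matching between $\BB(M)$ and $\BB(N)$; I would convert it into a bottleneck matching between $\beta_0(M)\sqcup\beta_1(N)$ and $\beta_0(N)\sqcup\beta_1(M)$ by sending, for each matched pair of bars $[a,b)\leftrightarrow[a',b')$, $a\mapsto a'$ and $b'\mapsto b$ (cost $\le\e$ each, the two endpoints landing on the same ``even'', resp. ``odd'', side), and by pairing, for each unmatched bar $[a,b)$ of $M$ (so $b-a\le 2\e$), its endpoint $a\in\beta_0(M)$ with its endpoint $b\in\beta_1(M)$, which sit on opposite sides of the target matching, at cost $b-a\le 2\e$ (symmetrically for unmatched bars of $N$). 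Since a free interval is never $\e$-interleaved with the zero module, every point must indeed be matched; letting $\e\downarrow\dIo(M,N)$ gives the bound $2\,\dIo(M,N)$.

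For $n>1$ I would first pass to the discrete setting: an \fp{} $\rn$-module is determined by its restriction to a sufficiently fine grid $\tfrac1k\ZZZ^n$, an $\e\bm 1$-interleaving restricts to such a grid with amplitude $\lceil k\e\rceil/k$, and Betti multisets as well as the bottleneck and interleaving distances all rescale linearly under refinement; so it suffices to treat $\ZZZ^n$-indexed modules and let $k\to\infty$. Over $\ZZZ^n$ the multigraded Betti numbers are the barcodes of the homology of the Koszul complex of $M$, and a Hilbert-series/M\"obius-inversion computation shows that for \emph{any} rank decomposition $(\CC_\sp,\CC_\sm)$ of $M$ one has, as $\ZZZ$-valued functions on $\ZZZ^n$,
\[
\beta_{\even}(M)-\beta_{\odd}(M)=\sum_{R\in\CC_\sp}\partial R-\sum_{R\in\CC_\sm}\partial R,
\]
where $\partial R$ is the signed multiset of the (finite) vertices of the right-open rectangle $R$: the vertex attaining the upper bound of $R$ in the coordinates of a set $T\subseteq\onen$ carries the sign $(-1)^{|T|}$ and lands in $\beta_{|T|}$. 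In particular $\beta_{\even}(M)-\beta_{\odd}(M)$ depends only on $\mathrm{rk}(M)$, hence only on the minimal rank decomposition $\mrd_\spm(M)$.

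With this in hand, I would feed the minimal rank decompositions of $M$ and $N$ into \Cref{thm:stability_rank-decomp}, which (for $\e$ close to $\dIo(M,N)$) makes the multisets of rectangles $\mrd_\sp(M)\sqcup\mrd_\sm(N)$ and $\mrd_\sp(N)\sqcup\mrd_\sm(M)$ $\e$-close in the matching distance. The goal is then to transfer this closeness, vertex by vertex through the displayed identity, into a genuine bottleneck matching between $\beta_{\even}(M)\sqcup\beta_{\odd}(N)$ and $\beta_{\even}(N)\sqcup\beta_{\odd}(M)$: ``thick'' rectangles matched to ``thick'' ones displace all their vertices by $O(\e)$; a ``thin'' rectangle (width $\le 2\e$ in some coordinate $i$) has its vertices organized into pairs differing only in coordinate $i$ by $\le 2\e$ and carrying opposite signs, hence lying on opposite sides of the target matching, and one pairs each such pair internally. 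Book-keeping how these $O(\e)$ displacements accumulate across the $n$ homological stages of the Koszul complex — and, crucially, how the matching distance's weight factor $\omega$ degrades on skew lines — is what produces the constant $n^2-1$.

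The main obstacle is precisely this transfer. \Cref{thm:stability_rank-decomp} is phrased in the matching distance, so it controls line restrictions only up to the weight $\omega$, and does not by itself furnish a single global bijection of rectangles with $\e$-close vertices; extracting one, and tracking the compounding of errors so as to land exactly on $n^2-1$, is the technical heart of the argument. (An alternative, ``resolution-level'' route — lifting the interleaving morphisms $M\rightleftarrows N$ to chain maps between minimal free resolutions and using that a homotopy between chain maps of minimal complexes vanishes after tensoring with $\kkk$ — runs into the dual difficulty that the Betti data then appears as a direct sum of point modules, which are $\e$-trivial, so one must instead carry it as the corresponding sum of \emph{free} modules and keep precise track of which grades the interleaving maps connect.) The discretization step, by contrast, is routine, although it does require checking compatibility of minimal resolutions and interleavings with grid refinement, which uses the \fp{} hypothesis.
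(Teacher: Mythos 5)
Your $n=1$ argument is correct and elementary: over $\RRR$ the Betti numbers in degrees $0$ and $1$ are exactly the multisets of left and finite right endpoints of bars, algebraic stability hands you a matching of bars, and the conversion to a bottleneck matching of endpoint multisets (matched bars: endpoint to endpoint; short unmatched bars: left endpoint to its own right endpoint, which lands on the opposite side of the disjoint union) does give cost $\le 2\e$. This is a genuinely different and cheaper route than the paper uses for that case, and it works.

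The $n>1$ argument, however, has a gap that I do not think is merely ``technical book-keeping,'' and it goes beyond the matching-distance issue you already flag. The deeper obstruction is that the rank invariant --- and hence the minimal rank decomposition, and hence the Koszul/M\"obius identity you write --- determines only the \emph{signed} Betti function $\beta_{\even}(M)-\beta_{\odd}(M)$ as a $\ZZZ$-valued function, not the individual multisets $\beta_{\even}(M)$ and $\beta_{\odd}(M)$. Minimality of the resolution forbids common grades between consecutive $\beta_i,\beta_{i+1}$, but not between $\beta_i,\beta_{i+2}$; more generally there can be arbitrary ``internal cancellation'' between the even and odd multisets that is invisible to the rank invariant. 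Two modules can have identical rank invariants (hence identical $\mrd_\spm$) yet Betti multisets that are far apart in bottleneck distance, once the cancelling pairs are reinstated. So even if \Cref{thm:stability_rank-decomp} were upgraded from a matching-distance bound to a genuine bottleneck matching of rectangles (your stated obstacle), the vertex-by-vertex transfer would still fail: the mass your identity moves around is the cancelled mass, not the actual mass in $\beta_{\even}(M)\sqcup\beta_{\odd}(N)$ and $\beta_{\even}(N)\sqcup\beta_{\odd}(M)$. The statement you are trying to prove controls information the rank invariant simply does not contain.

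The paper's route (visible in the Appendix, which adapts the proof of this cited theorem to vertical interleavings) sidesteps this entirely. It never goes through the rank invariant: it takes minimal free resolutions $P_\bullet\twoheadrightarrow M$, $Q_\bullet\twoheadrightarrow N$ of length $\le n$ (Hilbert syzygy), invokes a persistence-theoretic Schanuel's lemma to show that the free modules $\bigoplus_i P_{2i}\oplus\bigoplus_i Q_{2i+1}$ and $\bigoplus_i P_{2i+1}\oplus\bigoplus_i Q_{2i}$ are $(n+1)\e$-interleaved whenever $M,N$ are $\e$-interleaved, and then applies the bottleneck stability theorem for free modules (from \cite{bakke2021stability}), whose constant is $(n-1)$ for $n>1$ and $1$ for $n=1$. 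The product $(n-1)(n+1)=n^2-1$ (resp. $2\cdot 1$) gives the stated bound. Because the argument manipulates the actual free modules and not their signed shadow, cancellation never enters, and the constant falls out of two modular lemmas rather than a delicate transfer. Your ``resolution-level'' aside is pointing toward this; the ingredient you would want there is precisely Schanuel's lemma rather than a lift of the interleaving maps to chain maps.
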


More precisely, the result from~\cite{oudot2024stability} says that, if $M$ and $N$ are ordinarily $\e$-interleaved,
then there exists an $O(\e)$-matching between barcodes $\beta_{\even}(M)\sqcup \beta_{\odd}(N)$ and $\beta_{\even}
(N)\sqcup \beta_{\odd}(M)$. In the case of a vertical $\e$-interleaving between $M$ and $N$, the conclusion can be
strengthened with the existence of a vertical $O(\e)$-matching between the barcodes, for a slightly larger constant
factor:
\begin{theorem}\label{thm:stability_betti_vertical}
	Let  $a\leq b\in\RRR$. For any \fp{} modules ${M,N\colon [a,b]\times\rn\to \vect}$:
	\[
		\dBv(\beta_{\even}(M)\sqcup \beta_{\odd}(N),\beta_{\even}(N)\sqcup \beta_{\odd}(M))\leq
		\begin{cases}
			(n-1)(n+2)\dIv(M,N), & \text{if $n>1$},\\
			3\dIv(M,N), & \text{if $n=1$}.
		\end{cases}
	\]
\end{theorem}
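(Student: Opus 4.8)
The plan is to mimic the proof of Theorem~\ref{thm:stability_betti} but to carry out the reduction to one dimension along the vertical direction~$\bm{1}_0$ instead of the diagonal~$\bm{1}$. Recall that the proof of Theorem~\ref{thm:stability_betti} in~\cite{oudot2024stability} proceeds roughly as follows: given an ordinary $\e$-interleaving between $M$ and $N$, one restricts to a generic affine line of slope $\bm{1}$, on which the interleaving becomes an ordinary $\e$-interleaving of $1$-parameter modules, uses the $1$-parameter bottleneck stability of Betti numbers on that line (the factor $2$), and then reassembles the local matchings into a global bottleneck matching of the $\rn$-Betti numbers, paying an extra combinatorial factor that accounts for the fact that a generator/relation of~$M$ contributes to Betti numbers of $M$ in several nearby degrees when projected onto the line (this is where $(n^2-1)$ comes from). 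I would run the same machinery verbatim, with one change: the "lines" along which I restrict are now the vertical fibers $\{\delta\}\times\rn$ is \emph{not} a line — so instead I restrict to vertical \emph{rays} $\{(\delta, t+c_2,\dots,t+c_n) : t\in\RRR\}$ for fixed $\delta\in[a,b]$ and fixed offsets $c_i$, i.e., lines of direction $\bm{1}_0$ inside a chosen hyperplane $\{\delta\}\times\rn$. On such a line the vertical $\e$-interleaving restricts to an ordinary $\e$-interleaving of $1$-parameter modules, because the shift by $\e\bm{1}_0$ moves points along the line's direction.

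The key steps, in order, would be: (1) Fix $\delta\in[a,b]$; the restrictions $M|_{\{\delta\}\times\rn}$ and $N|_{\{\delta\}\times\rn}$ are \fp{} $\rn$-modules (restriction of an \fp{} module to a coordinate hyperplane is \fp{}), and the vertical $\e$-interleaving restricts to an ordinary $\e$-interleaving between them. (2) Apply Theorem~\ref{thm:stability_betti} (the non-vertical statement) to these two $\rn$-modules on the hyperplane $\{\delta\}\times\rn$: this gives an ordinary $\bigl((n-1)^2-1\bigr)$-... wait, the hyperplane is $(n-1)$-dimensional, so the factor is $(n-1)^2-1=(n-1)(n+1)$ if $n-1>1$, and $2$ if $n-1=1$. (3) Now account for the difference between Betti numbers of the restricted module $M|_{\{\delta\}\times\rn}$ and the restriction $\beta_i(M)|_{\{\delta\}\times\rn}$ of the Betti numbers: a minimal presentation of $M$ restricts to a (generally non-minimal) presentation of $M|_{\{\delta\}\times\rn}$, and cancelling the non-minimal part shifts each degree-$i$ generator of the restriction into degrees $i$ and $i+1$ of the ambient module — this is the extra "$+1$ in the homological degree" bookkeeping, contributing the passage from $(n-1)(n+1)$ to $(n-1)(n+2)$, and from $2$ to $3$ when $n=1$. (4) Finally, take the supremum over $\delta\in[a,b]$ and over the choice of line direction within each hyperplane, and reassemble; since the interleaving amplitude is uniform in $\delta$ and the combinatorial factor is uniform, the resulting vertical bottleneck matching has amplitude bounded by the stated constant times $\dIv(M,N)$.

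The main obstacle I expect is step~(3): controlling precisely how the multigraded Betti numbers of the ambient module $M$ on $[a,b]\times\rn$ relate, after restriction to a hyperplane $\{\delta\}\times\rn$, to the multigraded Betti numbers of the restricted module — i.e., showing that minimal-resolution cancellation spreads a degree-$i$ syzygy of the restriction across at most degrees $i$ and $i+1$ of $M$, so that the even/odd partition is respected up to this one-step ambiguity. This is exactly the lemma that powers the $(n^2-1)$ in Theorem~\ref{thm:stability_betti}; here one needs its relative version for the inclusion of a coordinate hyperplane, which should follow from the standard fact that restricting a free $\rn$-module to $\{\delta\}\times\rn$ yields a free module whose generators sit in the same (projected) degree, together with the minimal-resolution uniqueness. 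Since the excerpt says the argument is "a more or less direct adaptation," I would keep the write-up in the appendix terse, reusing the combinatorial lemmas of~\cite{oudot2024stability} as black boxes and only spelling out the vertical-line restriction and the bookkeeping of the extra homological shift.
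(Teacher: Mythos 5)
Your plan diverges from the paper's, and the divergence is exactly at the step you yourself flag as the ``main obstacle.'' The paper's proof does \emph{not} proceed by restricting the (general) modules $M,N$ to hyperplanes $\{\delta\}\times\rn$ and then trying to compare $\beta_i(M)|_{\{\delta\}\times\rn}$ with $\beta_i(M|_{\{\delta\}\times\rn})$. That comparison is genuinely delicate, because the restriction of a minimal resolution of $M$ to a vertical hyperplane is typically \emph{not} a minimal resolution of the restriction (distinct free generators with different $\delta$-coordinates can become identified after restriction, causing cancellations that move terms across homological degrees), and you do not supply an argument controlling this. The paper sidesteps the issue entirely by applying the persistent Schanuel lemma \emph{first}, over the full $(n+1)$-parameter domain $[a,b]\times\rn$: Lemma~\ref{lem:cor9_betti_paper} turns the vertical $\e$-interleaving between $M$ and $N$ into a vertical $(n+2)\e$-interleaving between the two \emph{free} modules $\bigoplus P_{\even}\oplus \bigoplus Q_{\odd}$ and $\bigoplus P_{\odd}\oplus \bigoplus Q_{\even}$ (here $n+2$ comes from the resolution length $n+1$ by Hilbert's syzygy theorem over $[a,b]\times\rn$). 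Only then does it restrict: for free modules the decomposition by the starting $\delta$-coordinate $\alpha(J)$ of each interval summand behaves perfectly well under vertical interleaving (Lemma~\ref{lemma:nonzero_morphism_same_region} and Proposition~\ref{prop:submodules_interleaving}), giving a clean vertical analogue of the free-module stability theorem (Lemma~\ref{lem:stability_free_flow}) with factor $n-1$. The product $(n-1)(n+2)$ falls out directly, with no need to compare ``Betti numbers of a restriction'' to ``restriction of Betti numbers.'' In short: Schanuel first, restrict later.

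There are also two smaller slips worth fixing. First, your recollection of the proof of Theorem~\ref{thm:stability_betti} in~\cite{oudot2024stability} as a ``restrict to generic diagonal lines, match $1$-parameter Betti numbers, and reassemble'' argument does not match the structure the appendix is adapting from, which is Schanuel + the Bakke--Lesnick free-module stability theorem (the paper says so explicitly in the first paragraph of Appendix~\ref{sec:proof_stability_inv}). Second, the hyperplane $\{\delta\}\times\rn\subset [a,b]\times\rn\cong\RRR^{n+1}$ is $n$-dimensional, not $(n-1)$-dimensional, and in any case $(n-1)^2-1=n(n-2)\neq(n-1)(n+1)$; you reach $(n-1)(n+1)$ only by coincidence. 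Finally, even granting a per-hyperplane ordinary bottleneck matching, you would still need an argument (analogous to Proposition~\ref{prop:submodules_interleaving}) showing that these can be glued across $\delta\in[a,b]$ into a single \emph{vertical} bottleneck matching of intervals in $[a,b]\times\rn$; that gluing is precisely what the free-module decomposition by $\alpha(J)$ provides, and it would need an independent replacement under your approach.
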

The proof mirrors that of Theorem~\ref{thm:stability_betti} and is provided in Appendix~\ref{sec:proof_stability_inv}
for completeness.

{
\subsection*{Statistics}\label{background:stats}\
In what follows, we fix a \emph{probability space} $(\Omega, \mathcal A, \Pbb)$, where $\Pbb$ is a measure of total mass~$1$, i.e., $\Pbb(\Omega)=1$, on the measurable space $(\Omega,\mathcal A)$.

Given another measurable space~$\measspacetuple E$,
a \emph{random variable} {taking values in} $\measspacetuple E$ is a measurable function $Z\colon (\Omega,
	\mathcal A)\to \measspacetuple E$.
We adopt the following terminology and notations, where $\PP(E)$ denotes the set of probability measures on~$\measspacetuple E$:
\begin{enumerate}[label=\textnormal{(\textit{\roman*})}]
	\item The {\em law} of a random variable $Z$ on $(E,\setmeasurableset E)$ is the
	      probability measure $P_Z\in \PP(E)$ defined by:
	      \begin{equation*}
		      \forall A\in \setmeasurableset E, \quad P_Z(A) := \Pbb \left(
		      Z^{-1}(A)\right) = \Pbb \left( Z\in A \right),
	      \end{equation*}
	      where we denote ${Z \in A}$ by $Z^{-1}(A)$.
We write $Z \sim \mu$ when $\mu = P_Z$.
	\item When $Z$ is integrable, its \emph{expectation} is defined as
	      \begin{equation*}
		      \Ebb(Z) = \int_\Omega Z(\omega)\dd \Pbb(\omega) = \int_{E} z \dd P_Z(z).
	      \end{equation*}
	      If $Z\sim\mu$, we write $\Ebb_{Z\sim \mu}$ for $\Ebb$ when
	      the law of $Z$ is not clear from the context.

	\item Random variables $Z_1,\dots,Z_k$ on~$\measspacetuple E$ are said to be \emph{(mutually) independent} if the law of the joint random variable $(Z_1,\dots,Z_k)$ on~$\measspacetuple {E\times\cdots\times E}$ is the product measure $P_{Z_1}\otimes P_{Z_2}\otimes\cdots\otimes P_{Z_k}$.
	\item Given a probability measure $\mu \in \PP(E)$, a \emph{$k$-sample} from~$\mu$ is a $k$-tuple $(Z_1,\dots,Z_k)$ of random variables that are independent and identically distributed (i.i.d.) with law~$\mu$.
	\item For a metric space $(X,d_X)$, we always consider its associated canonical
	      measurable space $(X, \mathcal{B}(X))$, where $\mathcal{B}(X)$ denotes the Borel
	      $\sigma$-algebra of $X$, i.e., the $\sigma$-algebra generated by the open sets
	      of $(X,d_X)$. 
\end{enumerate}

\begin{definition}[$(a,b)$-standard measure]\label{def:ab_std_meas}
	Given a metric space~$(X,d_X)$ and two real numbers $a>0$ and $b> 0$, a probability measure~$\mu$  on~$X$ is said to be \emph{$(a,b)$-standard} if 
	\begin{equation*}
		\forall x\in \supp(\mu) \textnormal{ and } r>0,\quad  \mu(B_{X}(x,r)) \ge \min \left\{ 1,  ar^b \right\} .
	\end{equation*}
	We denote by
	$\abstd X$
	the set of $(a,b)$-standard {probability} measures on $X$.
\end{definition}

\begin{definition}[Convergence rates]\label{def:convergence_rate}
	Let $(Z_k)_{k\in\NNN}$ be a sequence of random variables taking values in a metric space $(X,d_X)$,
	and let $Z$ be another random variable.
	The sequence $(Z_k)_{k\in\NNN}$ is said to {\em converge to $Z$} if it does so in expectation, i.e.,
	if $\Ebb \left( d_X(Z_k,Z) \right)\xrightarrow[k\to \infty]{}
		0$,
		where $d_X(Z_k,Z)$ is regarded as a random variable taking values in $(\RRR, |\cdot|)$.
	Given a non-increasing sequence of positive numbers $ (\xi_k)_{k\in \mathbb{N}}$ with $\xi_k \xrightarrow[k\to \infty]{} 0$, we say that $(Z_k)_{k\in\NNN}$ \emph{converges with rate at least}
	$(\xi_k)_{k\in\NNN}$ if
	$\Ebb \left( d_X(Z_k,Z) \right) \le \xi_k$ for any index $k\in \mathbb{N}$.
\end{definition}

\begin{definition}[Estimator]
	Let $\measspacetuple E$ be a measurable space and let $\theta_*\in E$.
	Let $X_k=(Z_1, \dots, Z_k)$ be a $k$-tuple of random variables taking values in some measurable space $\measspacetuple F$. 
	An \emph{estimator} $\hat\theta_k$ of $\theta_*$ \emph{based on} $X_k$ is an
	$X_k$-measurable random variable in $\measspacetuple E$, i.e., 
	there exists a measurable function $h\colon\measspacetuple F^k\to \measspacetuple E$ such that $\hat \theta_k = h(Z_1,\dots, Z_k)$.
\end{definition}

\begin{definition}[Minimax convergence rates]
	Let $\mathcal P$ be a set of probability measures on a metric space $(X,d_X)$ and $(Z_k)_{k\in \mathbb{N}}$ a sequence of i.i.d.\  random variables sampled according to $\mu$ in $\mathcal P$.
	Fix a point $\theta_*\in X$. For each $k\in\NNN$, let $\Theta_k$ denote the set of estimators of $\theta_*$ based on $(Z_1,\dots,Z_k)$. 
	
	Let $(\xi_k)_{k\in \mathbb{N}}$ be a
	non-increasing sequence of positive real numbers.
	The sequence $(\xi_k)_{k\in \mathbb{N}}$ is a \emph{minimax} rate on the tuple $(\left( \Theta_k
		\right)_{k \in \mathbb{N}}, \mathcal P)$ if there exist two constants $0<c<C<\infty$ such that, for
		any $k\in \mathbb{N}$, we have:
	\begin{equation*}
		c\xi_k \le
		\inf_{\hat\theta_k\in \Theta_k}\sup_{\mu\in \mathcal P}\Ebb_{(Z_1,\cdots,Z_k)\sim\mu^{\otimes k}}
		\left( d_X(\hat\theta_k,\theta_*) \right)
		\le 
		C\xi_k.
	\end{equation*}
	The rate $(\xi_k)_{k\in\NNN}$ is \emph{quasi-minimax} if there exists another constant $\alpha\in \mathbb{N}$ such that, for any $k\in\NNN$, we have: 
	\begin{equation*}
		c\xi_k \le
		\inf_{\hat\theta_k\in \Theta_k}\sup_{\mu\in \mathcal P}\Ebb_{(Z_1,\cdots,Z_k)\sim\mu^{\otimes k}}
		\left( d_X(\hat\theta_k,\theta_*) \right)
		\le 
		C(\log k)^\alpha\xi_k.
	\end{equation*}
	Given a modulus of continuity $\omega\colon \mathbb{R}_{\ge 0} \to \mathbb{R}_{\ge 0}$, the rate $(\xi_k)_{k\in\NNN}$ is \emph{$\omega$-minimax}
	if there exist two constants $0<c<C<\infty$ such that, for any $k\in \mathbb{N}$, we have:
	\begin{equation*}
		c\omega(\xi_k) \le
		\inf_{\hat\theta_k\in \Theta_k}\sup_{\mu\in \mathcal P}\Ebb_{(Z_1,\cdots, Z_k)\sim\mu^{\otimes k}}
		\left( d_X(\hat\theta_k,\theta_*) \right)
		\le 
		C\omega(\xi_k). 
		\end{equation*}
		The rate $(\xi_k)_{k\in\NNN}$ is \emph{$\omega$-quasi-minimax} if there exist constants $\alpha\in\NNN$ and $C'>0$ such that, for any $k\in\NNN$, we have:
		\begin{equation*}
		c\omega(\xi_k) \le
		\inf_{\hat\theta_k\in \Theta_k}\sup_{\mu\in \mathcal P}\Ebb_{(Z_1,\cdots, Z_k)\sim\mu^{\otimes k}}
		\left( d_X(\hat\theta_k,\theta_*) \right)
		\le 
		C\omega(C'(\log k)^{\alpha}\xi_k).
	\end{equation*}
	
	A sequence of estimators $( \hat\theta_k )_{k \in \mathbb{N}}\in \Theta$
	that achieves a minimax (resp. quasi-minimax, $\omega$-minimax,  or
	$\omega$-quasi-minimax) rate is called a \emph{minimax} (resp. \emph{quasi-mimimax}, \emph{$\omega$-minimax},  or $\omega$-\emph{quasi-minimax})
	estimator.
\end{definition}
}

\section{Problem statement and main results}\label{sec:results}
Let $(X, d_X)$ be a compact geodesic metric space with positive
convexity radius~$\rhox$, and let
$\ff:X\to\RRR^n$ be a continuous function. 
Here, $\rn$ is equipped with the $\infty$-norm. 
Both $X$ and $\ff$ are unknown. By the Heine–Cantor Theorem,
$\ff$ admits a modulus of continuity $\omega\colon\rp \to \rp$. Neither $\omega$ nor $\rhox$ needs to be known, as they only play a role in our bounds and not in our constructions. The function $\ff$ is not assumed to be pfd, however our assumptions imply that it is tame in the sense that $\rk(\hsf_{\bmx,\bmy})$ is finite for all $\bmx<\bmy$ with $\bmx_i<\bmy_i$ for each $i\in\{1,\cdots,n\}$ (see Corollary~\ref{cor:thm_fixed_radius_implies_tame}).

Our input is a finite point cloud $P\subseteq X$ such that $\ff|_P$,
the restriction of~$\ff$ to~$P$, is known, as well as the pairwise geodesic
distances $d_X(p,q)$ between the points $p,q\in P$.
The problem we address is that of estimating~$\hs(\ff)$, the persistent
homology of the sublevel filtration of $\ff$, from our input. For this we
assume that $P$ is a geodesic $\e$-sample of~$X$, for some possibly unknown $\e$ that we assume
to be small enough compared to the convexity radius~$\rhox$.

To build our estimator, we first construct
the \emph{function-Rips multifiltration}~$\RR^{\bullet}(\FP):=
\left\{\RR^\delta \left( \FF_{\bmx}\cap P  \right)\right\}_{\delta\in\rp,\, \bmx\in\rn}$
from the restriction of $\ff$ to $P$.
Then, following~\cite{chazal2011scalar}, our estimator is a smoothed version
of~$\hs(\RR^{\bullet}(\FP))$, denoted by~$\hs(\Rinc(\FP))$ and defined formally
as the image of
the morphism of persistence modules induced in homology by the inclusion of the
filtration $\RR^{\bullet}(\FP)$ into its horizontal rescaling by a factor
of~$2$, that is:
\[\hs(\Rinc(\FP)) := \im\ \hs\left(\RR^{\bullet}(\FP) \hookrightarrow \RR^{2\bullet}(\FP)\right),\]
where \(\RR^{2\bullet}(\FP):= \left\{\RR^{2\delta}(\FF_{\bmx}\cap P)\right\}_{\delta\in\rp,\, \bmx\in\rn}.\)
\begin{remark}\label{rem:1vs2Rips}
The rationale behind using $\hs(\Rinc(\FP))$ instead of
$\hs(\RR^{\bullet}(\FP))$ itself as an estimator for~$\hsf$ is that, while the Rips complex can recover the homological type of a space from a finite sampling under some regularity
conditions on that space~\cite{l-vrcms-01}, in general it takes a pair of Rips
complexes with parameters within a factor of at least~$2$ of each other to do
so~\cite{co-tpbr-08}. And since sublevel sets may potentially behave wildly 
when the level approaches critical values of the function, in our work we apply the
construction for general spaces. It is unknown whether $\hs(\RR^{\bullet}(\FP))$ itself can approximate $\hsf$ under such general assumptions on~$X$ and~$\ff$ as ours.
\end{remark}

In the course of our analysis of the behavior of~$\hs(\Rinc(\FP))$, we will consider intermediate constructions, namely
the persistent homologies of:
\begin{itemize}
	\item the \emph{function-offset filtration}
		$\OO^{\bullet}(\FP):= \left\{\OO^{\delta}(\FF_{\bmx}\cap P)\right\}_{\delta\in\rp,\, \bmx\in\rn}$;
	\item the \emph{function-\v{C}ech filtration}
		$\CC^{\bullet}(\FP):= \left\{\CC^{\delta}(\FF_{\bmx}\cap P)\right\}_{\delta\in\rp,\, \bmx\in\rn}$.
\end{itemize}
These can also serve as alternative estimators when they can be built, for instance---in the case
of~$\CC^{\bullet}(\FP)$---when we can test the emptiness of the intersection of finitely many open geodesic balls.

We use our estimators in two types of scenarios: (1) when we have an estimate of the value~$\e$ for which $P$ is an
$\e$-sample of~$X$, and (2) when we do not have such an estimate at our disposal. The first scenario is the one
considered in previous work like~\cite{chazal2011scalar}, and it is relevant in that there is an abundant literature in
statistics on scale parameter estimation---some of which has already been used in the context of TDA with real-valued
functions~\cite{carriere2018statistical}. The second scenario is more general, and our approach to it leverages the
multiparameter setting to avoid the prior estimation of the scale parameter.  We investigate the two scenarios in
Sections~\ref{sec:est_e_known} and~\ref{sec:est_e_unknown} respectively. In Section~\ref{sec:handle_noise_in_input} we
	study the robustness of our results under perturbations of the input pairwise geodesic distances or
function values.


\subsection{Estimating $\hsf$ with known~$\e$}
\label{sec:est_e_known}

Suppose we know~$\e$ or some reasonable estimate. Then we can approximate our target $\hsf$ by restricting our
estimators to a vertical hyperplane $\{\delta\}\times\rn\subseteq\rprn$ for a suitable choice of parameter~$\delta$. We
denote these restrictions respectively by $\hs(\OdFP)$, $\hs(\CdFP)$
and $\msd$.

\begin{theorem}\label{thm:estimator_fixed_radius}
	Let $X$ be a compact geodesic space,
	let $\ff:X\to\rn$ be an $\omega$-continuous
	function 
	for some modulus of
	continuity $\omega:\rp\to\rp$, and
	let $P$ be a finite
	geodesic $\varepsilon$-sample of $X$. Then:
	\begin{enumerate}[label=\textnormal{(\textit{\roman*})}]
		\item \label{enum:thm:fixed_radius_offset}for any choice of $\delta\geq \varepsilon$, the persistence modules $\hsf$
			and $\hs(\OdFP)$ are ordinarily $\omega(\delta)$-interleaved;
		\item \label{enum:thm:fixed_radius_cech}for any choice of $\delta\in [\varepsilon,\rhox)$, the modules $\hsf$ and
			$\hs(\CdFP)$ are ordinarily $\omega(\delta)$-interleaved;
		\item \label{enum:thm:fixed_radius_rips}for any choice of $\delta\in [2\varepsilon,\rhox/2)$, the modules $\hsf$ and
			$\msd$ are ordinarily $\omega(2\delta)$-interleaved.
	\end{enumerate}
\end{theorem}
The proof of this result is given in Section~\ref{sec:fix_radius}.

\cref{thm:estimator_fixed_radius} can be read in two different ways. First, when the modulus of continuity $\omega$ is known, we obtain explicit bounds on the interleaving distance between our estimators and the target $\hsf$. Second, when we only know that $\ff$ is continuous, the Heine–Cantor Theorem implies that
$\ff$ admits some modulus of continuity~$\omega$, and even without knowning that~$\omega$, we are guaranteed by \cref{thm:estimator_fixed_radius} that our estimators converge to $\hsf$ as $\e$ goes to zero under suitable choices of parameter~$\delta$.

When $\e$ is known exactly, the best choice for~$\delta$ is
the lower bound of its admissible interval, i.e., $\delta=\e$ in
\cref{enum:thm:fixed_radius_offset,enum:thm:fixed_radius_cech} and $\delta=2\e$ in \cref{enum:thm:fixed_radius_rips}.
When $\e$ is only known approximately, the admissible interval provides some leeway for the choice of~$\delta$.
As already alluded to, the unknown quantities $\omega$ and $\rhox$ appear in the bounds but are not involved in the
construction of the estimators. Also, there is no approximation guarantee on~$\hs(\RdFP)$ but there is one on its
smoothing~$\msd$ according to \cref{enum:thm:fixed_radius_rips}. This item is the generalization of the main result
of~\cite{chazal2011scalar} from real-valued functions to vector-valued functions.  The other items serve as
intermediate steps in its proof, and they generalize their counterparts from~\cite{chazal2011scalar} to vector-valued
functions. They also stand as independent approximation results when the corresponding estimators can be built.

An immediate consequence of Theorem~\ref{thm:estimator_fixed_radius} is that any $\dIo$-stable invariant computed on
our estimators approximates the corresponding invariant defined on the target~$\hsf$. Here is for instance the
approximation guarantee obtained for the multigraded Betti numbers of $\msd$, which follows from
Theorem~\ref{thm:stability_betti}.
\begin{corollary}\label{cor:stab_inv_rn}
	Under the hypotheses of Theorem~\ref{thm:estimator_fixed_radius}~\ref{enum:thm:fixed_radius_rips}, and assuming
	further that the module~$\hsf$ is \fp{}, for any choice of $\delta\in[2\e, \rhox/2)$ we have the following
	inequalities where $M=\hsf$ and $N=\msd$:
	\begin{align*}
		& \dBo\left(\beta_{\even}(M)\sqcup \beta_{\odd}(N),\beta_{\even} (N)\sqcup \beta_{\odd}(M)\right) \leq
		\begin{cases}
			(n^2-1)\omega(2\delta) & \text{if $n>1$}, \\
			2\omega(2\delta)       & \text{if $n=1$}.
		\end{cases}
	\end{align*}
\end{corollary}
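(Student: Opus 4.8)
The plan is to derive Corollary~\ref{cor:stab_inv_rn} as a direct consequence of Theorem~\ref{thm:estimator_fixed_radius}~(iii) together with the two stability theorems recalled in the background, namely Theorem~\ref{thm:stability_rank-decomp} (stability of rank decompositions in the matching distance) and Theorem~\ref{thm:stability_betti} (stability of multigraded Betti numbers in the bottleneck distance). The key observation is that all three results are stated for ordinary $\varepsilon$-interleavings of $\rn$-indexed persistence modules, so once we know $M = \hs(\FF)$ and $N = \msd$ are ordinarily $2c\delta$-interleaved, we are exactly in position to invoke them.

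First I would fix $\delta\in[2\e,\rhox/2)$ and set $M=\hs(\FF)$ and $N=\msd$; by Theorem~\ref{thm:estimator_fixed_radius}~(iii) these modules are ordinarily $2c\delta$-interleaved, i.e.\ $\dIo(M,N)\leq 2c\delta$. Next I would check that the finite-presentability hypotheses of the stability theorems are met: $M=\hs(\FF)$ is \fp{} by assumption, and $N=\msd$ is \fp{} because it is defined as the image of a morphism of homology modules of finite simplicial bifiltrations on the finite point cloud~$P$ (such images are finitely presentable). With both modules \fp{}, Theorem~\ref{thm:stability_rank-decomp} gives
\[
\dmo\left(\mrd_\sp (M) \sqcup \mrd_\sm(N),\ \mrd_\sp (N) \sqcup \mrd_\sm(M)\right)\leq \dIo(M,N)\leq 2c\delta,
\]
which is the first inequality. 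Likewise, Theorem~\ref{thm:stability_betti} gives
\[
\dBo\left(\beta_{\even}(M)\sqcup \beta_{\odd}(N),\ \beta_{\even} (N)\sqcup \beta_{\odd}(M)\right)\leq
\begin{cases}
(n^2-1)\,\dIo(M,N), & n>1,\\
2\,\dIo(M,N), & n=1,
\end{cases}
\]
and substituting $\dIo(M,N)\leq 2c\delta$ yields the bound $2c\delta(n^2-1)$ for $n>1$ and $4c\delta$ for $n=1$, which is the second inequality.

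There is essentially no genuine obstacle here, since the corollary is purely a matter of plugging the interleaving bound of Theorem~\ref{thm:estimator_fixed_radius}~(iii) into two black-box stability results. The only point that requires a word of justification — and the place I would be most careful — is the finite-presentability of the estimator $\msd$: one should note that $\RR^{\delta}(\FP)$ and $\RR^{2\delta}(\FP)$ are multifiltrations of finite simplicial complexes indexed over~$\rn$ whose homology modules are therefore \fp{}, and that the image of a morphism between \fp{} modules is again \fp{} (the category of \fp{} modules over~$\rn$ is abelian, or one can argue directly via finiteness of generators and relations). Everything else is a one-line substitution.
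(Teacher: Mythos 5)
Your proof is correct and takes essentially the same route as the paper, which derives the corollary directly by plugging the $2c\delta$-interleaving from Theorem~\ref{thm:estimator_fixed_radius}~(iii) into the stability results Theorem~\ref{thm:stability_rank-decomp} and Theorem~\ref{thm:stability_betti}. Your extra note on why $\msd$ is finitely presentable (image of a morphism between homology modules of finite simplicial multifiltrations) is a reasonable point the paper leaves implicit.
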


Another consequence of \cref{thm:estimator_fixed_radius}~(i) is that, under our assumptions,  the persistence module $\hsf$ satisfies some form of tameness, described in the following corollary whose proof is given in~\cref{sec:proof_of_tame}.
\begin{corollary}\label{cor:thm_fixed_radius_implies_tame}
	Let $X$ be a compact geodesic space, and  let $\ff:X\to\rn$ be an $\omega$-continuous function for some modulus of continuity $\omega:\rp\to\rp$. Then the persistence module $\hs(\ff)$ is tame in the sense that the rank $\rk(\hs(\ff)_{\bmx,\bmy})$ is finite for all $\bmx < \bmy\in\rn$ with $\bmx_i < \bmy_i$ for each $i\in\{1,\cdots,n\}$.
\end{corollary}


Suppose now the points of~$P$ are {i.i.d.\ samples drawn from} some unknown probability
measure~$\mu$ supported on~$X$. In this setting, standard statistical
techniques can be used to estimate~$\e$ with high probability, then
Theorem~\ref{thm:estimator_fixed_radius} can be applied to get statistical
estimates of $\hsf$ or of any $\dIo$-stable invariant thereof. This holds
under some regularity conditions on the measure, typically
$(a,b)$-standardness (\cref{def:ab_std_meas}) for some known or unknown
parameters~$a,b$. The precise set of conditions is given in
Section~\ref{sec:stat}
(\cref{assumption:ab_std,assumption:mod_cont_reg,assumption:admit_modcont,assumption:compact_support,assumption:manifold_support}),
together with the analysis
of the statistical estimators (Propositions~\ref{prop:cv_rate_known},
\ref{prop:minimax_rate} and~\ref{prop:cv_rate_unknown_ab}). We summarize our
results in the following statement---see also~\cref{rem:quasi-minimax}.
\begin{theorem}\label{thm:stat_results_overview}
	Suppose the points $p_1, \dots, p_k$ of~$P$ are {i.i.d.\ samples drawn from} some unknown
	probability measure~$\mu$, such that
	\cref{assumption:ab_std,assumption:mod_cont_reg,assumption:admit_modcont,assumption:compact_support}
	of
	Section~\ref{sec:stat} hold---in particular, $\mu$ is $(a,b)$-standard
	(\cref{def:ab_std_meas}), {and the target function $\ff$ is $\omega$-continuous.}
	If $a,b$ are known, then
	{there exists an explicit sequence of positive numbers $(\delta_k)_{k \in \mathbb{N}}$ }
	such that
	the {sequences of approximations } $\left(\hs(\OO^{\delta_k}(\FP))\right)_{k\in \mathbb{N}}$,
	$ \left( \hs(\CC^{\delta_k}(\FP)) \right)_{k\in \mathbb{N}}$ and $ \left( {\msdk} \right)_{k\in \mathbb{N}}$ of~$\hsf$
	are consistent and
	$\omega$-quasi-minimax {estimators of $\hsf$}. If $a,b$ are unknown, then, under the extra
	\cref{assumption:manifold_support}, there exists an explicit sequence of random positive numbers $(\hat\delta_k)_{k
	\in \mathbb{N}}$ such that
	$\left(\hs(\OO^{\hat\delta_k}(\FP))\right)_{k\in \mathbb{N}}$,
	$ \left( \hs(\CC^{\hat\delta_k}(\FP)) \right)_{k\in \mathbb{N}}$ and $ \left( {\mshdk} \right)_{k\in \mathbb{N}}$ are
	consistent and
	$\omega$-quasi-minimax estimators of $\hsf$.
\end{theorem}


\subsection{Estimating $\hsf$ with unknown~$\e$}
\label{sec:est_e_unknown}
{When $\e$ is unknown and cannot be effectively estimated---for instance when
	\cref{assumption:ab_std,assumption:mod_cont_reg,assumption:admit_modcont,assumption:compact_support,assumption:manifold_support}
	of Section~\ref{sec:stat} are not satisfied, we adopt the usual approach in persistence theory {which} is to not fix
	the scale parameter~$\delta$ but rather to consider it as another filtration parameter. This means using the full $(n+1)$-parameter
	modules $\hs(\OO^{\bullet}(\FP))$, $\hs(\CC^{\bullet}(\FP))$
	and $\ms$ as estimators. In order to state approximation results, we  extend our target~$\hsf$ to a
	persistence module over~$\rprn$
	by taking its left Kan extension $\lanf$ along the poset embedding $\iota: \rn \hookrightarrow \rprn$ given by
	$\bmx\mapsto (0,\bmx)$ for all $\bmx\in\rn$. Since the category $\rn$ is small and the category $\Vect$ is cocomplete,
	$\lanf$ is well-defined as a functor $\rprn\to \Vect$, and given pointwise  by the colimit formula~\cite{mac2013categories}:
	\begin{equation}\label{eq:Lanf_colim}
          \forall (\delta, \bmx) \in\rprn, \quad \lanf_{(\delta, {\bmx})}= \varinjlim \limits_{\substack{{\bmy}\in\rn\\ \iota
		({\bmy}) \leq (\delta, {\bmx})}} \hsf_{\bmy} \cong
		\hsf_{\bmx}.
	\end{equation}
	Moreover, $\lanf$ has
	structural morphisms that are identity maps horizontally (i.e., along the
	first coordinate axis) and the structural morphisms of $\hsf$ vertically
	(i.e., along any  direction orthogonal to the first coordinate axis), as
	illustrated in Figure~\ref{fig:example_kan_extension_R2}.
}

\begin{theorem}\label{thm:estimator_varying_radius}
	Let $X$ be a compact geodesic space,
	let $\ff:X\to\rn$ be an $\omega$-continuous
	function 
	for some modulus of
	continuity $\omega:\rp\to\rp$, and
	let $P$ be a finite
	geodesic $\varepsilon$-sample of $X$. Then:
	\begin{enumerate}[label=\textnormal{(\textit{\roman*})}]
		\item 
			for any $\delta_0\geq\e$, within the slab $[\e, \delta_0]\times\rn$ the restricted modules
			$\lanf|_{[\varepsilon,\delta_0]\times\rn}$ and \\ $\hs(\OO^{\bullet}(\FP))|_{[\varepsilon,\delta_0]\times\rn}$ are
			vertically $\omega(\delta_0)$-interleaved;
		\item 
			for any $\delta_0\in[\e, \rhox)$, within the slab $[\e, \delta_0]\times\rn$ the restricted modules
			$\lanf|_{[\varepsilon,\delta_0]\times\rn}$ and \\ $\hs(\CC^{\bullet}(\FP))|_{[\varepsilon,\delta_0]\times\rn}$ are
			vertically $\omega(\delta_0)$-interleaved;
		\item 
			for any $\delta_0\in[2\e,\rhox/2)$, within the slab $[2\e, \delta_0]\times\rn$ the restricted modules
			$\lanf|_{[2\varepsilon,\delta_0]\times\rn}$ and $\ms|_{[2\varepsilon,\delta_0]\times\rn}$ are vertically
			$\omega(2\delta_0)$-interleaved.
	\end{enumerate}
\end{theorem}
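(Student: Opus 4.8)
The plan is to produce each of the three vertical interleavings directly as a statement about the $(n+1)$-parameter estimators over the prescribed slab, assembled out of the hyperplane-wise ordinary interleavings that Theorem~\ref{thm:estimator_fixed_radius} already supplies. The useful elementary remark is that, since shifting by $\bm{1}_0$ leaves the first coordinate unchanged, a vertical $\e$-interleaving between functors $F,G\colon[a,b]\times\rn\to\vect$ is precisely a family of ordinary $\e$-interleavings $(f_\delta,g_\delta)$ between the restrictions $F|_{\{\delta\}\times\rn}$ and $G|_{\{\delta\}\times\rn}$, one per $\delta\in[a,b]$, subject only to the requirement that the $f_\delta$ (resp.\ the $g_\delta$) commute with the horizontal structure maps of $F$ and $G[\e\bm{1}_0]$ (resp.\ of $G$ and $F[\e\bm{1}_0]$); the two interleaving identities are then checked hyperplane by hyperplane. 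Recall further that $\hs(\OO^\bullet(\FP))$, $\hs(\CC^\bullet(\FP))$ and $\ms$ restrict in the hyperplane $\{\delta\}\times\rn$ to $\hs(\OdFP)$, $\hs(\CdFP)$ and $\msd$ respectively, while $\lanf$ restricts to $\hs(\FF)$ in every hyperplane, with \emph{identity} horizontal structure maps. So the only things that remain to be arranged are: (a) the interleaving morphisms of Theorem~\ref{thm:estimator_fixed_radius} should be chosen naturally in~$\delta$; and (b) the amplitude $c\delta$ (resp.\ $2c\delta$) should be uniformized to $c\delta_0$ (resp.\ $2c\delta_0$) over the slab, which --- because $\lanf$ is horizontally constant --- will cost nothing.

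\textbf{Items (i) and (ii).} For (i), the $c$-Lipschitz and geodesic $\e$-sample hypotheses furnish, for every $\delta\in[\e,\delta_0]$ and $\bmx\in\rn$, two subspace inclusions in~$X$, namely $\OdFP_{\bmx}\hookrightarrow\FF_{\bmx+c\delta\bm{1}}\hookrightarrow\FF_{\bmx+c\delta_0\bm{1}}$ (the first because $\ff$ is $c$-Lipschitz, the second because $\delta\le\delta_0$) and $\FF_{\bmx}\hookrightarrow\OdFP_{\bmx+c\delta_0\bm{1}}$ (each point of $\FF_{\bmx}$ has a sample point within geodesic distance $\e\le\delta$, whose value under $\ff$ exceeds its own by at most $c\e\bm{1}\le c\delta_0\bm{1}$). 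Applying $\hs$ yields $f_\delta\colon\hs(\OdFP)\to\hs(\FF)[c\delta_0\bm{1}]$ and $g_\delta\colon\hs(\FF)\to\hs(\OdFP)[c\delta_0\bm{1}]$; the two interleaving identities hold because composing the relevant inclusions in~$X$ recovers structure maps, and naturality in both $\bmx$ and~$\delta$ is automatic, every map in play being a subspace inclusion in~$X$ (the horizontal structure maps of $\hs(\OO^\bullet(\FP))$ being inclusions of $\delta$-offsets into $\delta'$-offsets, and those of $\lanf$ being identities). This proves (i). For (ii), since $\delta_0<\rhox$ every scale occurring in the slab lies below~$\rhox$, so the persistent Nerve Lemma (Lemma~\ref{lem:nerve_theorem_commute}), applied with $P\rightsquigarrow(\FP)_{\bmx}\subseteq(\FP)_{\bmx'}$ and $\delta\le\delta'$, produces a natural isomorphism $\hs(\CC^\bullet(\FP))\cong\hs(\OO^\bullet(\FP))$ over the slab, compatible with both the horizontal ($\delta$) and the vertical ($\bmx$) structure maps; conjugating the interleaving of (i) by it proves (ii).

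\textbf{Item (iii).} Filtering the inclusions~\eqref{equ:include_cech_rips} by~$\ff$ gives, for every scale~$s$ and every $\bmx\in\rn$, a chain of subcomplex inclusions $\CC^{s/2}(\FP)_{\bmx}\hookrightarrow\RR^{s}(\FP)_{\bmx}\hookrightarrow\CC^{s}(\FP)_{\bmx}\hookrightarrow\RR^{2s}(\FP)_{\bmx}\hookrightarrow\CC^{2s}(\FP)_{\bmx}$, natural in~$s$ and~$\bmx$. This sandwich is what underlies Theorem~\ref{thm:estimator_fixed_radius}(iii): it forces the factor~$2$ (realizing $\FF_{\bmx}$ inside a Rips complex of scale~$\delta$ requires the $\e$-sample argument at \v{C}ech/offset scale $\delta/2\ge\e$, whence $\delta\ge2\e$; the upper bound $\delta<\rhox/2$ then comes from needing scale $2\delta<\rhox$ in the Nerve Lemma), and it is why one passes to the horizontal smoothing $\ms$. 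Following the proof of Theorem~\ref{thm:estimator_fixed_radius}(iii), and combining it with~(ii), one obtains for each fixed $\delta\in[2\e,\rhox/2)$ an ordinary $2c\delta$-interleaving between $\msd=\ms|_{\{\delta\}\times\rn}$ and $\hs(\FF)$ whose two morphisms are composites of: the Rips inclusions $\RR^\delta(\FP)\hookrightarrow\RR^{2\delta}(\FP)$ together with the sandwich inclusions above; the \v{C}ech/offset nerve isomorphisms; and the offset/sublevel inclusions from items (i)--(ii). Every such building block is natural in~$\delta$ --- the inclusions because they genuinely are inclusions of subcomplexes or subspaces and the horizontal structure maps of $\ms$ are induced by the Rips inclusions $\RR^{2\delta}(\FP)\hookrightarrow\RR^{2\delta'}(\FP)$, the nerve isomorphisms by Lemma~\ref{lem:nerve_theorem_commute}, and the horizontal structure maps of $\lanf$ being identities --- so the resulting $f_\delta$ and $g_\delta$ are natural in~$\delta$ (the only mild wrinkle being that the $g$-direction has an intrinsic amplitude $c\delta/2$ that varies with~$\delta$, which is harmless once the amplitude is uniformized). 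Finally, since $\delta\le\delta_0$ throughout the slab we have $2c\delta\le2c\delta_0$, so post-composing $f_\delta$ with the structure map $\hs(\FF)[2c\delta\bm{1}]\to\hs(\FF)[2c\delta_0\bm{1}]$ and $g_\delta$ with $\ms[2c\delta\bm{1}_0]\to\ms[2c\delta_0\bm{1}_0]$ (restricted to the hyperplane) upgrades the amplitude to $2c\delta_0$ while preserving naturality in~$\delta$, the latter precisely because $\lanf$ is horizontally constant. Hence $(f_\delta,g_\delta)_{\delta\in[2\e,\delta_0]}$ assembles into a vertical $2c\delta_0$-interleaving between $\ms$ and $\lanf$ over $[2\e,\delta_0]\times\rn$.

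\textbf{Main obstacle.} The conceptual content is all contained in Theorem~\ref{thm:estimator_fixed_radius}; the real work added here is the naturality-in-$\delta$ bookkeeping of item~(iii). This is a lengthy but routine diagram chase, resting on two trivialities --- subcomplex and subspace inclusions in~$X$ always commute with one another, and the nerve isomorphism is natural in the scale parameter by Lemma~\ref{lem:nerve_theorem_commute} --- together with the structural observation that $\lanf$ is constant along the scale axis, which is exactly what allows the $\delta$-dependent amplitudes to be uniformized and hence the hyperplane-wise interleavings to be upgraded to genuine slab-wise vertical interleavings.
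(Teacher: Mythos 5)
Your proof is correct and takes essentially the same route as the paper: assemble the hyperplane-wise ordinary interleavings of Theorem~\ref{thm:estimator_fixed_radius} into vertical interleavings by checking naturality in~$\delta$, using the fact that the building blocks are topological inclusions and Nerve-Lemma isomorphisms (Lemma~\ref{lem:nerve_theorem_commute}) and that $\lanf$ is horizontally constant so the $\delta$-dependent amplitudes can be uniformized to~$c\delta_0$ or~$2c\delta_0$ by post-composing with structure maps. The paper carries out the naturality bookkeeping for item~(iii) explicitly via two commutative diagrams, whereas you summarize it as a routine diagram chase, but the underlying argument is the same.
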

The proof of the theorem is given in Section~\ref{sec:vary_radius}.

This approximation result encapsulates the previous one in the sense that
restricting the estimators to the vertical hyperplane $\{\delta\}\times\rn$ for
$\delta=\delta_0$ recovers Theorem~\ref{thm:estimator_fixed_radius}. However,
the result says something deeper, namely: that the interleavings in the
vertical hyperplanes $\{\delta\}\times\rn$, for $\delta$ ranging over
$[\e,\delta_0]$  (resp. $[2\e,\delta_0]$), commute with the horizontal
structural morphisms of the modules,  so that they all together form a vertical
interleaving in the slab $[\e, \delta_0]\times\rn$ (resp. $[2\e,
\delta_0]\times\rn$).
{Intuitively, this means that prominent features in $\hsf$ are not just
	present in $\msd$ at individual scales~$\delta$ in the range $[2\e,
\delta_0]$,} {but that they persist across large ranges of scales in
	$\ms$. In particular, they are not ephemeral in~$\rprn$ and can be captured by
stable invariants.}


In terms of approximation accuracy, the result says that, the more one looks to the left (i.e., toward small positive
values of~$\delta_0$) in the parameter space $\rprn$, the more precisely the estimators approximate the target, until
some point where $\delta_0$ becomes too small and the approximation breaks down. On the opposite side, the more one
looks to the right (i.e., toward large values of ~$\delta_0$), the more the estimators drift away from the target,
until again the approximation eventually breaks down if the estimator is $\hs(\CC^{\bullet}(\FP))$ or $\ms$.

\begin{example}\label{ex:M_from_rips}
	Let $X$ be a unit circle in the plane, equipped with the geodesic distance~$d_X$ given by arc length. Let $P$ be a
	geodesic $\varepsilon$-sample of $X$ for a fixed value $\varepsilon>0$. Let $\ff$ be the height function along the
	vertical direction. See Figure~\ref{fig:example_M_from_rips} (left) for an illustration. We consider homology in
	degree~$1$, so $\honef$ has a single interval summand $\kkk^{[1, +\infty)}$, which in turn implies that $\lanfone$ has
	a single interval summand $\kkk^{\rp\times[1, +\infty)}$, shown in magenta in Figure~\ref{fig:example_M_from_rips}
	(center and right). The figure also depicts the modules $\hone(\RR^{\bullet}(\FP))$ (in yellow),
	$\hone(\RR^{2\bullet}(\FP))$ (in green), and $\hone(\RR^{\bullet\to 2\bullet}(\FP))$ (in blue), each one of which
	turns out to be composed of a single large interval summand without extra noise as shown in the figure.
	As predicted by Theorem~\ref{thm:estimator_varying_radius}~(iii), the estimator $\hone(\RR^{\bullet\to
	  2\bullet}(\FP))$ approximates the target $\lanfone$ at least within the range $[2\e, \rhox/2)$,
          with a vertical precision that
	deteriorates progressively (not more than linearly) as $\delta$ increases within that range.
\end{example}

Theorem~\ref{thm:estimator_varying_radius}, like its
counterpart Theorem~\ref{thm:estimator_fixed_radius},
implies that provably $\dIo$-stable invariants computed
from our estimators approximate the corresponding
invariants defined on their target, which this time
is~$\lanf$. {And since the interleaving is now
	vertical---hence stronger than an ordinary interleaving
	according to Remark~\ref{rem:vert_wins}, we can get more
	precise statements for
	some invariants.
	For instance, here is below the guarantee
	obtained for the multigraded Betti numbers of~$\ms$,
	which follows from
	Theorem~\ref{thm:stability_betti_vertical} and the fact
	that restrictions of \fp{} modules to a closed vertical
slab are \fp{}.}

\begin{corollary}\label{cor:stab_inv_rprn}
	Under the hypotheses of Theorem~\ref{thm:estimator_varying_radius}~\ref{enum:thm:fixed_radius_rips}, and assuming
	further that the module~$\hsf$ is \fp{}, within any slab $[2\e, \delta_0]\times\rn$ with $2\e\leq\delta_0<\rhox/2$ we
	have the following inequalities where $M=\lanf|_{[2\e, \delta_0]\times\rn}$ and $N=\ms|_{[2\e, \delta_0]\times\rn}$:
	\begin{align*}
		& \dBv\left(\beta_{\even}(M)\sqcup \beta_{\odd}(N),\beta_{\even} (N)\sqcup \beta_{\odd}(M)\right) \leq
		\begin{cases}
			(n-1)(n+2)\,\omega(2\delta_0) & \text{if $n>1$}, \\
			3\,\omega(2\delta_0)          & \text{if $n=1$}.
		\end{cases}
	\end{align*}
\end{corollary}

In other words, within the designated slab,  there is a bottleneck matching of controlled amplitude that matches the
multigraded Betti numbers of $\lanf$ and of $\ms$ vertically, i.e., orthogonally to the first coordinate axis.

\begin{example}\label{ex:M_from_rips_inv}
	Going back to Example~\ref{ex:M_from_rips}, the zoomed-in part of the image to the right of
	Figure~\ref{fig:example_M_from_rips}
	shows the existence of a vertical bottleneck matching (in green) of controlled amplitude between the multigraded Betti
	numbers of the restrictions of $\lanfone$ and $\hone(\RR^{\bullet\to 2\bullet}(\FP))$ to the slab~$[2\e, \rhox/2)$.
\end{example}



\subsection{Robustness to noise in the input}
\label{sec:handle_noise_in_input}

Theorems~\ref{thm:estimator_fixed_radius} and~\ref{thm:estimator_varying_radius} are stated for when
exact geodesic distances and function values are provided as input. In practice, function values may be subject to
measurement noise, and geodesic distances may have to be approximated from the input data---typically using distances
in some neighborhood graph. Our framework can take these imprecisions into account, modulo some adaptation in the
parameters of the estimators and in their approximation bounds.

Assume first that the data points $p\in P$ are assigned function values
$\tilde{\ff}(p)$ that are different from $\ff(p)$, and let $\zeta=\max_{p\in
P} \Vert \tilde{\ff}(p)-\ff(p)\Vert_{\infty}$. Note that the function
$\tilde{\ff}:P\to \rn$ itself may not necessarily admit $\omega$ as a modulus
of continuity; in fact, no regularity condition is placed on~$\tilde f$.

\begin{proposition}\label{prop:estimator_noise_fixed_radius}
	Let $X, \rhox, \ff, \omega, P, \e$ be as in Theorem~\ref{thm:estimator_fixed_radius}, and let $\tilde\ff$ be as above.
	Then:
	\begin{enumerate}[label=\textnormal{(\textit{\roman*})}]
		\item for any choice of $\delta\geq \varepsilon$, the modules $\hsf$ and $\hs(\OO^{\delta}(\tFP))$ are ordinarily
			$(\omega(\delta)+\zeta)$-interleaved;
		\item for any choice of $\delta\in [\varepsilon,\rhox)$, the modules $\hsf$ and $\hs(\CC^{\delta}(\tFP))$ are
			ordinarily $(\omega(\delta)+\zeta)$-interleaved;
		\item for any choice of $\delta\in [2\varepsilon,\rhox/2)$, the modules $\hsf$ and $\msdt$ are ordinarily
			$(\omega(2\delta)+\zeta)$-interleaved.
	\end{enumerate}
\end{proposition}

Assume now that the geodesic distance $d_X$ is replaced by some non-negative symmetric bivariate function $\tilde d_X$  on the input
point cloud~$P$, with the following property  where $\lambda,\kappa>0$ are constants such that
$\lambda=1+4\frac{\e}{\kappa}$:
\begin{equation}\label{equ:mu_graph}
	\forall p,q\in P, \frac{d_X(p,q)}{\kappa}\leq \tilde d_X(p,q) \leq 1+\lambda \frac{d_X(p,q)}{\kappa}.
\end{equation}
Such guarantees hold typically for approximations of~$d_X$ built from graph distances in some $\kappa$-neighborhood
graph---see e.g.~\cite[Lemma~7.1]{oudot2010geodesic}. Here $\kappa$ is chosen by the user, and $\lambda$ is known as
long as $\e$ is. Note that $\tilde d_X$ itself does not have to be a distance, in particular it may not satisfy the triangle
inequality.
%
Let us build the function-Rips multifiltration on~$P$ using $\tilde d_X$ as the ground `metric', and let us denote it
by~$\RR^{\bullet}_{\tilde d_X}(\FP)$ for clarity. Note that it is possible to build that multifiltration because Rips
complexes only require a non-negative symmetric bivariate function on the points to be defined. By contrast, unions of balls in~$X$
and their nerves cannot be built from~$\tilde d_X$, because it is defined only on the point cloud~$P$.
\begin{proposition}\label{prop:estimator_rips_noise_distance_fixed_radius}
	Let $X, \rhox, \ff, \omega, P, \e$ be as in \cref{thm:estimator_fixed_radius} . Let $\lambda=1+4\frac{\e}{\kappa}$ be
	as in \cref{equ:mu_graph}, and let $\RR^{\bullet}_{\tilde d_X}(\FP)$ be as above. Then, for any choice of $\delta \in
	[1+2\lambda\frac{\e}{\kappa},\, \frac{1}{2\lambda}(\frac{\rhox}{\kappa}-1))$,
	the persistence modules $\hsf$ and $\hs(\RR_{\tilde d_X}^{\delta\to
	(1+2\lambda\delta)}(\FP))$ are ordinarily
	$\omega(\kappa(1+2\lambda\delta))$-interleaved.
\end{proposition}

Note that $1+2\lambda\delta > 2\delta$, so the smoothing of the function-Rips multifiltration has to be by a factor
larger than~$2$ to compensate for the noise in the input pairwise geodesic distances.

As $\e$ goes to~$0$, one can let the neighborhood radius~$\kappa$  go to~$0$ as well, no faster than~$\e$ so that
$\frac{\e}{\kappa}$ remains bounded above by a constant: this allows the lower bound $(1+2\lambda\frac{\e}{\kappa})$
for the choice of $\delta$ to be bounded above by a constant and thus makes the amplitude of the interleaving go to~$0$
at the limit.

\Cref{prop:estimator_noise_fixed_radius,prop:estimator_rips_noise_distance_fixed_radius} can
be combined to build estimators of~$\hsf$ that are robust both to noise in the function values and to noise in the
pairwise geodesic distances---we leave this as an exercise to the reader. They also induce analogous results for the
estimation of $\dIo$-stable invariants of $\hsf$.

The proofs of~\Cref{prop:estimator_noise_fixed_radius,prop:estimator_rips_noise_distance_fixed_radius} are simple adaptations of the proof of
Theorem~\ref{thm:estimator_fixed_radius}, described in Remarks~\ref{rmk:proof_for_noise_function_fixed_radius}
and~\ref{rmk:proof_for_noise_distance_fixed_radius} respectively. The same adaptations work for Theorem~\ref{thm:estimator_varying_radius} as well (see
Remark~\ref{rmk:proof_for_noise_varying_radius}), yielding the following robustness guarantees for the estimation
of~$\lanf$:
\begin{proposition}\label{prop:estimator_noise_varying_radius}
	Let $X, \rhox, \ff, \omega, P, \e$ be as in
	Theorem~\ref{thm:estimator_varying_radius}. Assume that each data point
	$p\in P$ is assigned a function value $\tilde\ff(p)$ that may be different
	from~$\ff(p)$, and let $\zeta=\max_{p\in P}
	\left\|\tilde\ff(p)-\ff(p)\right\|_\infty$.
	Then:
	\begin{enumerate}[label=\textnormal{(\textit{\roman*})}]
		\item 
			for any $\delta_0\geq\e$, within the slab $[\e, \delta_0]\times\rn$ the restricted modules
			$\restr{\lanf} {[\varepsilon,\delta_0]\times\rn}$ and \\
			$\restr{\hsobtffp}{[\varepsilon,\delta_0]\times\rn}$
			are vertically $(\omega(\delta_0)+\zeta)$-interleaved;
		\item 
			for any $\delta_0\in[\e, \rhox)$, within the slab $[\e, \delta_0]\times\rn$ the restricted modules
			$\restr{\lanf} {[\varepsilon,\delta_0]\times\rn}$ and \\
			$\restr {\hscbtffp}{[\varepsilon,\delta_0]\times\rn}$
			are vertically $(\omega(\delta_0)+\zeta)$-interleaved;
		\item 
			for any $\delta_0\in[2\e,\rhox/2)$, within the slab $[2\e, \delta_0]\times\rn$ the restricted modules
			$\restr{\lanf} {[2\varepsilon,\delta_0]\times\rn}$ and
			$\restr {\hsrbtffp}{[2\varepsilon,\delta_0]\times\rn}$
			are vertically $(\omega(2\delta_0)+\zeta)$-interleaved.
	\end{enumerate}
\end{proposition}

\begin{proposition}\label{prop:estimator_rips_noise_distance_varying_radius}
	Let $X, \rhox, \ff, \omega, P, \e$ be as in Theorem~\ref{thm:estimator_varying_radius}. Assume that the geodesic
	distance $d_X(p,q)$ between each pair of data points $p,q\in P$ is replaced by some value~$\tilde d_X(p,q)$, such that
	\eqref{equ:mu_graph} holds for some constants $\lambda, \kappa$ with $\lambda=1+4\frac{\e}{\kappa}$. As previously,
	denote by~$\RR^{\bullet}_{\tilde d_X}(\FP)$ the function-Rips multifiltration built on~$P$ using $\tilde d_X$ instead
	of~$d_X$.
	Then, for any $\delta_0 \in [1+2\lambda\frac{\e}{\kappa},\, \frac{1}{2\lambda}(\frac{\rhox}{\kappa}-1))$,  within the
	slab $[1+2\lambda\frac{\e}{\kappa},\,\delta_0]\times\rn$ the restricted modules
	$\lanf|_{[1+2\lambda\frac{\e}{\kappa},\delta_0]\times\rn}$ and $\hs(\RR_{\tilde d_x}^{\bullet\to
	(1+2\lambda\bullet)}(\FP))|_{[1+2\lambda\frac{\e}{\kappa},\delta_0]\times\rn}$ are vertically
	$\omega(\kappa(1+2\lambda\delta_0))$-interleaved.
\end{proposition}

\section{Proof of Theorem~\ref{thm:estimator_fixed_radius}}\label{sec:fix_radius}

\begin{proof}[Proof of~\cref{thm:estimator_fixed_radius} \ref{enum:thm:fixed_radius_offset}]
	Recall that $\FF$ is the filtration generated by the sublevel sets of
	$\ff$.
	It is sufficient to prove that  $\FF_{{\bmx} -\omega(\delta)\bm{1}}\subseteq {\OO^\delta(\FF_{\bmx}\cap P)} \subseteq \FF_{{\bmx} +\omega(\delta)\bm{1}}$ for all $\delta\geq\varepsilon$ and ${\bmx} \in\rn$; the claim follows then by functoriality of $H_*$. From now on, we fix a $\delta\geq\varepsilon$ and an $\bmx\in\mathbb{R}^n$.

	For any $q\in {\OO^\delta(\FF_{\bmx}\cap P)}
	=\bigcup\limits_{{\substack{p\in P \\ \ff(p)\le \bmx}}}B_{X}(p,\delta)$,
	there exists a point $p\in P$ such that $\ff(p)\le
	\bmx$ and $d_X(p,q)<\delta$.
	Since $\ff$ admits $\omega$ as a modulus of
		continuity, we have $\|\ff(p)-\ff(q)\|_{\infty}\leq  \omega( d_X(p,q))\leq
	\omega(\delta)$, therefore $\ff(q)\leq \ff(p)+ \omega(\delta)\bm{1}\leq
	{\bmx} +\omega(\delta)\bm{1}$ and so $q\in\FF_{{\bmx}
		+\omega(\delta)\bm{1}}$.

		For any $q\in\FF_{{\bmx}-\omega(\delta)\bm{1}}$, we have $\ff(q)\leq
				{\bmx}  - \omega(\delta)\bm{1}$. Since $P$ is an $\varepsilon$-sample of
	$X$, there exists $p\in P$ such that $d_X(p,q) < \varepsilon\leq\delta$, that
		is, $q\in B_X(p,\delta)$. Then, since $\ff$ admits $\omega$ as a
		modulus of continuity, $\| \ff(p)-\ff(q) \|_{\infty}\leq
	\omega(d_X(p,q))\leq \omega(\delta)$. It follows that $\ff(p)\leq
	\ff(q)+\omega(\delta)\bm{1}\leq {\bmx} $,  and $q\in B_X(p,\delta) \subseteq
	\bigcup\limits_{{\substack{p'\in P \\ \ff(p')\le \bmx}}}B_X(p',\delta)={\OO^\delta(\FF_{\bmx}\cap P)}$.
\end{proof}

\begin{proof}[Proof of Theorem~\ref{thm:estimator_fixed_radius}~\ref{enum:thm:fixed_radius_cech}]
    Follows from Theorem~\ref{thm:estimator_fixed_radius}~\ref{enum:thm:fixed_radius_offset} combined with the isomorphism of persistence modules $\hs(\CdFP) \stackrel{\cong}{\longrightarrow} \hs(\OdFP)$ induced by Lemma~\ref{lem:nerve_theorem_commute} (which applies because $\delta<\rhox$). 
\end{proof}




\begin{proof}[Proof of Theorem~\ref{thm:estimator_fixed_radius}~\ref{enum:thm:fixed_radius_rips}]
Let $M=\msd$. To establish the ordinary $\omega(2\delta)$-interleaving between $M$ and $\hsf$, we construct two morphisms $\kappa^{\delta}:\hsf\to M[\omega(2\delta)\bm{1}]$ and $\gamma^{\delta}:M\to \hsf[\omega(2\delta)\bm{1}]$ such that:
\begin{equation}\label{equ:interleaving}
 \kappa^{\delta} [\omega(2\delta)\bm{1}] \circ\gamma^{\delta} = \varphi^{2\omega(2\delta)\bm{1}}_{M}
 \quad\quad\quad\text{and}\quad\quad\quad  
\gamma^{\delta} [\omega(2\delta)\bm{1}] \circ\kappa^{\delta} = \varphi^{2\omega(2\delta)\bm{1}}_{\hsf}. 
\end{equation}

More precisely, factoring the morphism $\hs(\RR^{\delta}(\FP))\to \hs(\RR^{2\delta}(\FP))$ through its image:
\( \hs(\RR^{\delta}(\FP))\twoheadrightarrow M \hookrightarrow \hs(\RR^{2\delta}(\FP)), \)
we define $\kappa^{\delta}:\hsf\to M[\omega(2\delta)\bm{1}]$ as the following composition, where the last arrow is the $\omega(2\delta)\bm{1}$-shift of the above epimorphism, where the isomorphism comes from the Nerve Lemma, and where the rest of the arrows are induced by inclusions at the topological level
\footnote{The first arrow is the composition
	\(
	\hsf\to \hs(\OO^{\e}(\FP))[\omega(\e)\bm{1}]  \to
	\hs(\OO^{\frac{\delta}{2}}(\FP))\left[\omega(\frac{\delta}{2})\bm{1}\right]\to
	\hs(\OO^{\frac{\delta}{2}}(\FP))[\omega(2\delta)\bm{1}]
	\)
	induced by inclusions of topological spaces,
	where the inclusion
	$\FF_{\bmx} \subseteq \OO^{\e}(\FF_{\bmx+\omega(\e)\bm{1}} \cap
		P)$
	holds for all $\bmx\in\mathbb{R}^n$ according to
	the proof of
	Theorem~\ref{thm:estimator_fixed_radius}~\ref{enum:thm:fixed_radius_offset}, and where the inclusion
	$
		\OO^{\e}(\FF_{\bmx+\omega(\e)\bm{1}} \cap P)\subseteq
		\OO^{\frac{\delta}{2}}(\FF_{\bmx+\omega(\frac{\delta}{2})\bm{1}}
		\cap P)$
	holds given that we assumed
	$\frac{\delta}{2}\geq \varepsilon$.
}:
\begin{equation}
	\hsf
	\to \hs(\OO^{\frac{\delta}{2}}(\FP))[\omega(2\delta)\bm{1}]\xrightarrow{\cong} \hs(\CC^{\frac{\delta}{2}}(\FP))[\omega(2\delta)\bm{1}]\to \hs(\RR^{\delta}(\FP))[\omega(2\delta)\bm{1}]\twoheadrightarrow M[\omega(2\delta)\bm{1}],
\end{equation}

Similarly, we define $\gamma^{\delta}:M\to \hsf[\omega(2\delta)\bm{1}]$ as the following composition:
\begin{equation}
M \hookrightarrow \hs(\RR^{2\delta}(\FP))\to \hs(\CC^{2\delta}(\FP))\xrightarrow{\cong}\hs(\OddFP)\to \hsf[\omega(2\delta)\bm{1}].
\end{equation}

Now, proving~\eqref{equ:interleaving} boils down to showing that, for all ${\bmx}\in\RRR^n$, we have:
\begin{equation}\label{equ:interleaving_t}
        \kappa^{\delta}_{{\bmx} }  \circ \gamma^{\delta}_{{\bmx} -\omega(2\delta)\bm{1}} = M_{{\bmx}-\omega(2\delta)\bm{1},{\bmx}+\omega(2\delta)\bm{1}}
        \quad\quad\quad\text{and}\quad\quad\quad
    \gamma^{\delta}_{{\bmx}+\omega(2\delta)\bm{1}}  \circ \kappa^{\delta}_{{\bmx}} 
		= 
		\hsf_{{\bmx},{\bmx}+2\omega(2\delta)\bm{1}}.
\end{equation}

To prove the left-hand equality in~\eqref{equ:interleaving_t} we rely on the following diagram, where the isomorphisms come from the Nerve Lemma, where $\alpha, v, \alpha', w$ come from the factorization of $\hs(\RR^{\delta}(\FP))\to \hs(\RR^{2\delta}(\FP))$ through its image $M$, where $\beta$ is a section of~$\alpha$ (so $\alpha\circ\beta=\id_{M_{\bmx-\omega(2\delta)\bm{1}}}$) and $\gamma$ is a retraction of $w$ (so $\gamma\circ w=\id_{M_{\bmx+\omega(2\delta)\bm{1}}}$) in the category of vector spaces, and where all the other arrows are induced by inclusions at the topological level (colors are used to distinguish the filtrations involved: red for \v Cech, blue for offset, and black for the others):
\[
\begin{tikzcd}[font=\small]
 \hs(\mathcal{R}^{\delta}(\FP))_{{\bmx}-\omega(2\delta)\bm{1}}\arrow[d,"h"]\arrow[ddd,bend right=2.5cm, shift right=0.1cm, "s"'] \arrow[r, two heads, "\alpha"'] & M_{{\bmx}-\omega(2\delta)\bm{1}} \arrow[ld, hook',"v"] \arrow[rr, "m"] \arrow[l, shift right=5pt, hook', "\beta"']                                      &                                                                                                                                              & M_{{\bmx}+\omega(2\delta)\bm{1}} \arrow[ld, hook', shift right=1pt, "w"']                                                                     \\
 \hs(\mathcal{R}^{2\delta}(\FP))_{{\bmx}-\omega(2\delta)\bm{1}} \arrow[rd, "b"] \arrow[rr, "r"]                                                                                                                                                                    &                                               & \hs(\mathcal{R}^{2\delta}(\FP))_{{\bmx}+\omega(2\delta)\bm{1}}  \arrow[ru, shift right=4pt, two heads, "\gamma"']                                & \hs(\mathcal{R}^{\delta}(\FP))_{{\bmx}+\omega(2\delta)\bm{1}} \arrow[ldd, "s'"'] \arrow[l, "h'"] \arrow[u, two heads, "\alpha'"']                           \\
 & \textcolor{red}{\hs(\mathcal{C}^{2\delta}(\FP))_{{\bmx}-\omega(2\delta)\bm{1}}} \arrow[dd, "\cong"' near start, "c" near start, red] &                                                                                                                                              & \textcolor{red}{\hs(\mathcal{C}^{\frac{\delta}{2}}(\FP))_{{\bmx}+\omega(2\delta)\bm{1}}}\arrow[dd, "\cong"', "\sigma",red] \arrow[u, "g"'] \arrow[ld, "p"'] \\
\textcolor{red}{\hs(\mathcal{C}^{\delta}(\FP))_{{\bmx}-\omega(2\delta)\bm{1}}} \arrow[uu, "a"'] \arrow[rr, "\mu"{near start}, crossing over] \arrow[dd, "\cong"', "k",red] \arrow[ru, "q"] &                                                                                                                                                         & \textcolor{red}{\hs(\mathcal{C}^{\delta}(\FP))_{{\bmx}+\omega(2\delta)\bm{1}}} 
\arrow[uu, "i",shift left=0.9cm] &                                                                                                                         \\                                                                                                                                                          & \textcolor{blue}{\hs(\OO^{2\delta}(\FP))_{{\bmx}-\omega(2\delta)\bm{1}}} \arrow[r, "d" near start]                        & \hsf_{\bmx} \arrow[r, "e"]  & \textcolor{blue}{\hs(\OO^{\frac{\delta}{2}}(\FP))_{{\bmx}+\omega(2\delta)\bm{1}}}  \arrow[ld, "n"']  \\
 \textcolor{blue}{\hs(\OO^{\delta}(\FP))_{{\bmx}-\omega(2\delta)\bm{1}}} \arrow[ru, "l"] \arrow[rr, "\nu"]                                                        &                                               & \textcolor{blue}{\hs(\OO^{\delta}(\FP))_{{\bmx}+\omega(2\delta)\bm{1}}}                       
 \\
 \arrow[from=4-3, to=6-3, "\cong"' near start, "t" near start,red, shift right=1cm, crossing over]          
\end{tikzcd}
\]

Some paths in this diagram are equivalent, due either to the commutativity of inclusion maps at the topological level, or to Lemma~\ref{lem:nerve_theorem_commute}, or to the factorization of $\hs(\RR^{\delta}(\FP))\to \hs(\RR^{2\delta}(\FP))$ through its image $M$, or to the definition of section or retraction,
or finally to the fact that $m:= M_{\bmx-\omega(2\delta)\bm{1},\, \bmx+\omega(2\delta)\bm{1}}$ is the restriction of $r\colon \hs(\RR^{2\delta}(\FP))_{\bmx-\omega(2\delta)\bm{1}}\to \hs(\RR^{2\delta}(\FP))_{\bmx+\omega(2\delta)\bm{1}}$ to $M_{\bmx-\omega(2\delta)\bm{1}}$.
This  implies the left-hand equality in~\eqref{equ:interleaving_t} via the following sequence of elementary steps:
%
\begin{align*}
    &\kappa^\delta_{\bmx}\circ \gamma^\delta_{\bmx-\omega(2\delta)\bm{1}}= (\id_{M_{\bmx+\omega(2\delta)\bm{1}}}) \circ \alpha'\circ g \circ \sigma^{-1} \circ e \circ d \circ c \circ b\circ v \circ \id_{M_{\bmx-\omega(2\delta)\bm{1}}} \\
    &= \ \gamma\circ (w\circ \alpha') \circ g \circ \sigma^{-1} \circ e \circ d \circ c \circ b\circ v \circ \id_{M_{\bmx-\omega(2\delta)\bm{1}}}\\
    &=\gamma\circ (h'\circ g) \circ \sigma^{-1} \circ e \circ d \circ c \circ b\circ v \circ \id_{M_{\bmx-\omega(2\delta)\bm{1}}} =\gamma\circ i \circ (p\circ \sigma^{-1}) \circ  e \circ d \circ c \circ b\circ v \circ \id_{M_{\bmx-\omega(2\delta)\bm{1}}}\\
    &=\ \gamma \circ i \circ t^{-1} \circ n \circ  e \circ d \circ c \circ b\circ v \circ (\id_{M_{\bmx-\omega(2\delta)\bm{1}}}) = \gamma \circ i \circ t^{-1} \circ n \circ  e \circ d \circ c \circ b\circ (v \circ \alpha) \circ \beta\\
    &=\ \gamma \circ i \circ t^{-1} \circ n \circ  e \circ d \circ c \circ b\circ (h) \circ \beta = \gamma \circ i \circ t^{-1} \circ n \circ  e \circ d \circ c \circ (b\circ a) \circ s \circ \beta\\
    &=\ \gamma \circ i \circ t^{-1} \circ n \circ  e \circ d \circ (c \circ q) \circ s \circ \beta = \gamma \circ i \circ t^{-1} \circ (n \circ  e \circ d \circ l) \circ k \circ s \circ \beta  \\
    &= \gamma \circ i \circ (t^{-1} \circ \nu \circ k) \circ s \circ \beta = \gamma \circ (i \circ \mu \circ s) \circ \beta = \gamma \circ r \circ (h) \circ \beta = \gamma \circ r \circ v \circ (\alpha \circ \beta) \\ 
    &= \gamma \circ (r \circ v) \ =\ (\gamma\circ w) \circ m=m.
\end{align*}

To prove the right-hand equality in~\eqref{equ:interleaving_t} we rely on the following diagram, where the isomorphisms once again come from the Nerve Lemma, where $\alpha', w$ come from the factorization of $\hs(\RR^{\delta}(\FP))\to \hs(\RR^{2\delta}(\FP))$ through its image $M$, and where all the other arrows are induced by inclusions at the topological level (colors are used to distinguish the filtrations involved: red for \v Cech, blue for offset, and black for the others): 
\begin{equation}\label{equ:commute_diagram_2}
	\begin{tikzcd}
		\hs(\mathcal{R}^{\delta}(\FP))_{{\bmx}+\omega(2\delta)\bm{1}} \arrow[rd, "s'"]\arrow[rr,"h'",bend left=10pt] \arrow[r, two heads,"\alpha'"]                                          &  M_{\bmx+\omega(2\delta)\bm{1}} \arrow[r,hook,"w"] & \hs(\mathcal{R}^{2\delta}(\FP))_{{\bmx}+\omega(2\delta)\bm{1}} \arrow[d, "b'"]                          \\
		\textcolor{red}{\hs(\mathcal{C}^{\frac{\delta}{2}}(\FP))_{{\bmx}+\omega(2\delta)\bm{1}}}\arrow[d, "\cong"', "\sigma",red]  \arrow[u, "g"'] \arrow[r, "p"]&                                                         \textcolor{red}{\hs(\mathcal{C}^{\delta}(\FP))_{{\bmx}+\omega(2\delta)\bm{1}}}\arrow[r, "q'"]\arrow[ru, "i"] & \textcolor{red}{\hs(\mathcal{C}^{2\delta}(\FP))_{{\bmx}+\omega(2\delta)\bm{1}}} \arrow[d, "c'","\cong"',red]  \\
		\textcolor{blue}{\hs(\OO^{\frac{\delta}{2}}(\FP))_{{\bmx}+\omega(2\delta)\bm{1}}} \arrow[rr, "\pi"]  &                                                                                                                & \textcolor{blue}{\hs(\OO^{2\delta}(\FP))_{{\bmx}+\omega(2\delta)\bm{1}}}                      \arrow[d, "d'"]                                                             \\
		\hsf_{\bmx} \arrow[rr, "\eta"] \arrow[u, "e"']                                                                            &                                                                                                                & \hsf_{{\bmx}+2\omega(2\delta)\bm{1}}
	\end{tikzcd}
\end{equation}

Equivalence of paths in this diagram is due either to the commutativity of inclusion maps at the topological level, or to Lemma~\ref{lem:nerve_theorem_commute}, or to the factorization of $\hs(\RR^{\delta}(\FP))\to \hs(\RR^{2\delta}(\FP))$ through its image $M$. This  implies the right-hand equality in~\eqref{equ:interleaving_t} via the following sequence of elementary steps:
\begin{align*}
\gamma^{\delta}_{\bmx+\omega(2\delta)\bm{1}}\circ \kappa^{\delta}_{\bmx} \ &=\ d' \circ c' \circ b' \circ (w\circ \alpha') \circ g \circ \sigma^{-1} \circ e\ =\ d' \circ c' \circ b' \circ (h') \circ g \circ \sigma^{-1} \circ e\\
&=\ d' \circ c' \circ b' \circ i\circ (s' \circ g) \circ \sigma^{-1} \circ e\ =\ d' \circ c' \circ (b' \circ i)\circ p \circ \sigma^{-1} \circ e \\
&=\ d' \circ (c' \circ q'\circ p \circ \sigma^{-1}) \circ e\ =\ (d' \circ \pi \circ e)=\eta.
\end{align*}
\end{proof}

\begin{remark}\label{rmk:proof_for_noise_function_fixed_radius}
	Assume that each data point $p\in P$ is assigned a function value $\tilde\ff(p)$ that may be different from~$\ff(p)$, and let $\zeta=\max_{p\in P} \left\|\tilde\ff(p)-\ff(p)\right\|_\infty$.
	Then, the respective sublevel filtrations
	$\{\FF_{\bmx}\cap P\}_{\bmx \in\mathbb{R}^n}$ and
	$\{\tilde \FF_{\bmx}\cap P\}_{\bmx\in \rn}$
	of
	$\ffp$ and $\tffp$ are
	$\zeta\bm{1}$-interleaved,
	therefore so are
	$\OO^\delta(\FP) $ and $\OO^\delta(\tFP)$.
	\cref{prop:estimator_noise_fixed_radius}
	follows then by the exact same proof as for
	\cref{thm:estimator_fixed_radius},
	with $\FP$ replaced by $\tFP$, with
	$\OO^\delta(\FF_{\bmx}\cap P)$ replaced by
	$\OO^\delta(\tilde\FF_{\bmx}\cap P)$,
	and with $\omega(\delta)$ and $\omega(2\delta)$
	replaced respectively by
	$\omega(\delta)+\zeta$ and $\omega(2\delta)+\zeta$.
\end{remark}

\begin{remark}\label{rmk:proof_for_noise_distance_fixed_radius}
	Assume that the geodesic distance $d_X(p,q)$ between each pair of data points
	$p,q\in P$ is replaced by some value~$\tilde d_X(p,q)$, such that
	\eqref{equ:mu_graph} holds for some constants $\lambda, \kappa$ with
	$\lambda=1+4\frac{\e}{\kappa}$. Observe then that the proof of
	Theorem~\ref{thm:estimator_fixed_radius}~(iii) only depends on
	Lemma~\ref{lem:nerve_theorem_commute} and on the following sequence
	of inclusions between filtrations for $\delta\geq 2\e$:
	\begin{equation}\label{equ:cech_rips_exact}
		\CC^{\e}(\FP)\hookrightarrow \CC^{\frac{\delta}{2}}(\FP)\hookrightarrow \RR^{\delta}(\FP)\hookrightarrow \CC^{\delta}(\FP)\hookrightarrow \RR^{2\delta}(\FP)\hookrightarrow  \CC^{2\delta}(\FP), 
	\end{equation}
	which, under the assumptions that~\eqref{equ:mu_graph} holds and that $\delta\geq 1+ 2\lambda\frac{\e}{\kappa}$, can be replaced by the following new sequence of inclusions between filtrations:
	\begin{equation}\label{equ:cech_rips_noise}
		\CC^{\e}(\FP)\hookrightarrow \CC^{\frac{\kappa(\delta-1)}{2\lambda}}(\FP)\hookrightarrow \RR^{\delta}_{\tilde d_X}(\FP)\hookrightarrow \CC^{\kappa\delta}(\FP)\hookrightarrow \RR^{1+2\lambda\delta}_{\tilde d_X}(\FP)\hookrightarrow  \CC^{\kappa(1+2\lambda\delta)}(\FP).
	\end{equation}
	Then, after replacing each filtration in the sequence of inclusions~\eqref{equ:cech_rips_exact} by its counterpart in the sequence~\eqref{equ:cech_rips_noise}, Proposition~\ref{prop:estimator_rips_noise_distance_fixed_radius} follows by the exact same proof as Theorem~\ref{thm:estimator_fixed_radius}~\ref{enum:thm:fixed_radius_rips}.
\end{remark}


\section{Proof of Theorem~\ref{thm:estimator_varying_radius}}\label{sec:vary_radius}
We begin with a simple observation. Let $\lanftop$ denote the left Kan extension, in the category~$\topc$, of the sublevel filtration $\FF$ along the embedding $\iota\colon \rn \hookrightarrow \rprn$ given by $\bmx\mapsto (0,\bmx)$ for all $\bmx\in\rn$. Since the category  $\rn$ is small and the category $\topc$ is cocomplete, $\lanftop$ is well-defined and given pointwise by the colimit formula:
\[ \forall (\delta, \bmx)\in\rprn, \quad \lanftop_{(\delta, {\bmx})}= \varinjlim \limits_{\substack{{\bmy}\in\rn\\ \iota ({\bmy}) \leq (\delta, {\bmx})}} \FF_{\bmy} \cong
\FF_{\bmx},
\]
and similarly for the structural maps, which end up being mere inclusions. We then have the simple relation:
\begin{equation}\label{equ:Lan-Lan}
\hs(\lanftop) \cong \lanf.
\end{equation}
With this observation in place, we can now proceed with the proof of the theorem.

\begin{proof}[Proof of Theorem~\ref{thm:estimator_varying_radius}~(i)]
	According to the proof of Theorem~\ref{thm:estimator_fixed_radius}~(i), for
	any $\delta\in [\varepsilon, \delta_0]$ there is an ordinary $n$-dimensional
	$\omega(\delta)$-interleaving between the restricted filtrations
	$\lanftop|_{\{\delta\}\times\rn}$ and
	$\OO^{\bullet}(\FP)|_{\{\delta\}\times\rn}$ inside the vertical hyperplane
	$\{\delta\}\times\rn\subset\RRR^{n+1}$.
	Since $\delta\leq\delta_0$, there is
	a fortiori an ordinary $n$-dimensional {$\omega(\delta_0)$-interleaving}
	between $\lanftop|_{\{\delta\}\times\rn}$ and
	$\OO^{\bullet}(\FP)|_{\{\delta\}\times\rn}$. 
	And since the interleaving maps
	are inclusions, they commute with the horizontal structural inclusions in
	$\lanftop|_{[\varepsilon,\delta_0]\times\rn}$ and in
	$\OO^{\bullet}(\FP)|_{[\varepsilon,\delta_0]\times\rn}$, therefore they all
	together form a vertical $\omega(\delta_0)$-interleaving between
	$\lanftop|_{[\varepsilon,\delta_0]\times\rn}$ and
	$\OO^{\bullet}(\FP)|_{[\varepsilon,\delta_0]\times\rn}$.
	The claim follows
	then from~\eqref{equ:Lan-Lan} and the functoriality of~$\hs$.
\end{proof}

\begin{proof}[Proof of Theorem~\ref{thm:estimator_varying_radius}~(ii)]
	Follows from Theorem~\ref{thm:estimator_varying_radius}~(i) combined with the
	isomorphism of persistence modules $\hs(\CC^{\bullet}(\FP))|_{[\e, \delta_0]}
	\stackrel{\cong}{\longrightarrow} \hs(\OO^{\bullet}(\FP))|_{[\e, \delta_0]}$
	induced by Lemma~\ref{lem:nerve_theorem_commute} (which applies because
	$\delta_0<\rhox$).
\end{proof}


\begin{proof}[Proof of Theorem~\ref{thm:estimator_varying_radius} (iii)]
	In the following we use the shorthands $M=\ms|_{[2\e,\delta_0]\times\rn }$ and
	$N=\lanf|_{[2\e,\delta_0]\times\rn}$.
	For any $\delta\in [2\e,\delta_0]$, the
	proof of Theorem~\ref{thm:estimator_fixed_radius} (iii) exhibits a pair of
	morphisms $\gamma^{\delta}, \kappa^{\delta}$  that form an ordinary
	$n$-dimensional $\omega(2\delta)$-interleaving between the restricted modules
	$M|_{\{\delta\}\times\rn}$ and $N|_{\{\delta\}\times\rn}$ inside the vertical
	hyperplane $\{\delta\}\times\rn\subset\RRR^{n+1}$.
	Post-composed by
	structural morphisms of the restricted modules, $\gamma^{\delta}$ and
	$\kappa^{\delta}$ yield a pair of morphisms $\bar\gamma^{\delta},
		\bar\kappa^{\delta}$  that form an ordinary $n$-dimensional
	$\omega(2\delta_0)$-interleaving between $M|_{\{\delta\}\times\rn}$ and
	$N|_{\{\delta\}\times\rn}$.
	For the precise definitions of
	$\bar\gamma^{\delta}$ and $\bar\kappa^{\delta}$, see
	diagrams~\eqref{equ:commute_gamma} and~\eqref{equ:commute_kappa} below.
	Proving that, all together, the morphisms $\bar\gamma^{\delta},
		\bar\kappa^{\delta}$ for $\delta\in [2\e, \delta_0]$ form a vertical
	$\omega(2\delta_0)$-interleaving between $M$ and $N$ boils down to showing
	that they commute with the horizontal structural morphisms of $M$ and $N$,
	which, in turn, reduces to showing that the following squares commute for all
	$\delta\leq\delta'\in [2\e,\delta_0]$ and $\bmx\in\rn$:

	\begin{equation}\label{equ:commut_kappa_gamma}
		\begin{tikzcd}
			M_{(\delta,\bmx)} \arrow[d, "\bar\gamma^{\delta}_{\bmx}"']  \arrow[r] & M_{(\delta',\bmx)} \arrow[d, "\bar\gamma^{\delta'}_{\bmx}"] \\
			N_{(\delta,\bmx+\omega(2\delta_0)\bm{1})}  \arrow[r]  &  N_{(\delta',\bmx+\omega(2\delta_0)\bm{1})}
		\end{tikzcd}
		\quad\quad\quad
		\begin{tikzcd}
			N_{(\delta,\bmx)} \arrow[d, "\bar\kappa^{\delta}_{\bmx}"']  \arrow[r] & N_{(\delta',\bmx)} \arrow[d, "\bar\kappa^{\delta'}_{\bmx}"] \\
			M_{(\delta,\bmx+\omega(2\delta_0)\bm{1})}  \arrow[r]  &  M_{(\delta',\bmx+\omega(2\delta_0)\bm{1})}
		\end{tikzcd}
	\end{equation}
	Unfolding the definitions of $\bar\gamma^\delta_{\bmx}$,
	$\bar\gamma^{\delta'}_{\bmx}$, $\bar\kappa^\delta_{\bmx}$ and
	$\bar\kappa^{\delta'}_{\bmx}$
	in these squares yields the following commutative diagrams, where the equalities come from the pointwise colimit formula for~$\lanf$ (\Cref{eq:Lanf_colim}), where the
	isomorphisms come from the Nerve lemma, where the injections and surjections
	come from the factorization $\hs(\RR^{\bullet}(\FP)) \twoheadrightarrow \ms
		\hookrightarrow \hs(\RR^{2\bullet}(\FP))$, and where the rest of the
	morphisms either come from inclusions at the topological level or are the
	structural morphisms of~$M$ and~$N$:
	\begin{equation}\label{equ:commute_gamma}
		\begin{tikzcd}
			M_{(\delta,\bmx)}\arrow[dddddd,bend right, dotted,"\bar\gamma^{\delta}_{\bmx}"',shift right=13] \arrow[r]\arrow[d,hook] & M_{(\delta',\bmx)} \arrow[d,hook] \arrow[dddddd,bend left, dotted,"\bar\gamma^{\delta'}_{\bmx}",shift left=13] \\
			\hs(\mathcal{R}^{\bullet}(\FP))_{(2\delta,\bmx)} \arrow[d] \arrow[r]  & \hs(\mathcal{R}^{\bullet}(\FP))_{(2\delta',\bmx)}\arrow[d]  \\
			\hs(\mathcal{C}^{\bullet}(\FP))_{(2\delta,\bmx)} \arrow[d, "\cong"'] \arrow[r] & \hs(\mathcal{C}^{\bullet}(\FP))_{(2\delta',\bmx)}\arrow[d, "\cong"] \\
			\hs(\OO^{\bullet}(\FP))_{(2\delta,\bmx)} \arrow[d] \arrow[r] & \hs(\OO^{\bullet}(\FP))_{(2\delta',\bmx)} \arrow[d] \\
			N_{(2\delta,\bmx+\omega(2\delta)\bm{1})} \arrow[r] \arrow[d, equal] & N_{(2\delta', \bmx+\omega(2\delta')\bm{1})} \arrow[d, equal] \\
			N_{(\delta,\bmx+\omega(2\delta)\bm{1})} \arrow[r] \arrow[d] & N_{(\delta', \bmx+\omega(2\delta')\bm{1})} \arrow[d] \\
			N_{(\delta, \bmx+\omega(2\delta_0)\bm{1})} \arrow[r] & N_{(\delta', \bmx+\omega(2\delta_0)\bm{1})}
		\end{tikzcd}
	\end{equation}
	\begin{equation}\label{equ:commute_kappa}
		\begin{tikzcd}
			N_{(\delta,\bmx)} \arrow[r] \arrow[d, equal] \arrow[dddddd,bend right, dotted,"\bar\kappa^{\delta}_{\bmx}"',shift right=18] & N_{(\delta',\bmx)}\arrow[d, equal] \arrow[dddddd,bend left, dotted,"\bar\kappa^{\delta'}_{\bmx}",shift left=18] \\
			N_{(\frac{\delta}{2},\bmx)} \arrow[r] \arrow[d] & N_{(\frac{\delta'}{2},\bmx)}\arrow[d] \\
			\hs(\OO^{\bullet}(\FP))_{(\frac \delta 2, \, \bmx+\omega(2\delta)\bm{1})} \arrow[r]\arrow[d,"\cong"'] & \hs(\OO^{\bullet}(\FP))_{(\frac {\delta'}2,\,\bmx+\omega(2\delta')\bm{1})}\arrow[d,"\cong"] \\
			\hs(\CC^{\bullet}(\FP))_{(\frac \delta 2,\,\bmx+\omega(2\delta)\bm{1})} \arrow[r] \arrow[d] & \hs(\CC^{\bullet}(\FP))_{(\frac {\delta'}2,\,\bmx+\omega(2\delta')\bm{1})}\arrow[d]\\
			\hs(\RR^{\bullet}(\FP))_{(\delta,\,\bmx+\omega(2\delta)\bm{1})} \arrow[r]\arrow[d, two heads] & \hs(\RR^{\bullet}(\FP))_{(\delta',\,\bmx+\omega(2\delta')\bm{1})}\arrow[d, two heads] \\
			M_{(\delta, \bmx+\omega(2\delta)\bm{1})} \arrow[r] \arrow[r] \arrow[d] & M_{(\delta', \bmx+\omega(2\delta')\bm{1})} \arrow[d] \\
			M_{(\delta, \bmx+\omega(2\delta_0)\bm{1})} \arrow[r] \arrow[r] & M_{(\delta', \bmx+\omega(2\delta_0)\bm{1})}
		\end{tikzcd}
	\end{equation}
	The commutativity of these two diagrams implies the commutativity of the squares in~\eqref{equ:commut_kappa_gamma}, hence the result.
\end{proof}

\begin{remark}\label{rmk:proof_for_noise_varying_radius}
	Since the proofs of Propositions~\ref{prop:estimator_noise_fixed_radius}
	and~\ref{prop:estimator_rips_noise_distance_fixed_radius} are literally the
	same as that of Theorem~\ref{thm:estimator_fixed_radius} (up to a change of
	parameters in the filtrations, see
	Remarks~\ref{rmk:proof_for_noise_function_fixed_radius}
	and~\ref{rmk:proof_for_noise_distance_fixed_radius}),
	Propositions~\ref{prop:estimator_noise_varying_radius}
	and~\ref{prop:estimator_rips_noise_distance_varying_radius} derive from them
	in exactly the same way as Theorem~\ref{thm:estimator_varying_radius} derives
	from Theorem~\ref{thm:estimator_fixed_radius}.
\end{remark}

\section{Statistical framework and proof of Theorem~\ref{thm:stat_results_overview}}\label{sec:stat}
In this section, we investigate the statistical performance of the estimators
introduced in \cref{thm:estimator_fixed_radius}.
We show that the {scale parameter~$\delta$ for which} our estimators converge can be estimated,
{leading to} theoretical quasi-minimax convergence rates
(\cref{prop:cv_rate_known,prop:minimax_rate}), even when the regularity of the
sampling measure is not known (\cref{prop:cv_rate_unknown_ab}).
Our analysis is inspired from the one in~\cite{carriere2018statistical}, which we adapt to our setting.

	Throughout the section we use the following setup.
Let $(X,d_X)$ be a metric space, $\ff\colon X\to \mathbb{R}^n$ a
function, and $\mu\in \setprobmeas X$ a probability measure supported
on~$X$, i.e., $\supp(\mu)\subseteq X$.
We consider an i.i.d.\ $k$-sample $X_k=(Z_1,\dots,Z_k)$ drawn from the measure~$\mu$, and our goal is to estimate $\hs(\ff)$.
Note that we write $X_k$ instead of $P$ as in the previous sections, to emphasize that the point cloud consists of $k$ independent draws.
In our statements we will make the following assumptions, or some subset thereof:
\begin{assumptions}
	\item \label{assumption:compact_support} $X$ is a compact geodesic space {with convexity radius~$\rhox>0$};
	\item \label{assumption:admit_modcont} The function $\ff$ admits a
	modulus of continuity $\omega\colon \mathbb{R}_{\ge 0}\to
		\mathbb{R}_{\ge 0}$;
	\item \label{assumption:ab_std} The sampling measure $\mu$ is an $(a,b)$-standard probability measure whose support is the entire space~$X$;
	\item \label{assumption:mod_cont_reg} The modulus of continuity
	$\omega$ has a controlled vanishing rate:
	$\delta = O(\omega(\delta))$ as $\delta\to 0$;
	\item \label{assumption:manifold_support}
	$X$ is a
	compact smooth manifold, and $\mu$ decomposes as
	${\mu=\mu_1+\mu_2}$, where $\mu_2(X)>0$ and $\mu_2$ is
	absolutely continuous w.r.t. the uniform measure on
	$X$, with positive density on $X$.
\end{assumptions}

Let us comment on these assumptions.
\cref{assumption:compact_support,assumption:admit_modcont} are the ones appearing in \cref{thm:estimator_fixed_radius}. 
{\cref{assumption:ab_std} ensures that, with high
probability, the sample~$X_k$ covers the space $X$, which is
paramount for correctly approximating $\hsf$. The property is
guaranteed by the  following result, adapted from {\cite[Theorem~3]{cuevas2004boundary}}.}
\begin{lemma}[{\cite[Theorem 2]{chazal2014convergence}}]\label{lemma:cuevas}
	Under \cref{assumption:ab_std}, for any $\eta>0$,
	\begin{equation}\label{eq:opt_delta}
		\mathbb{P}\left( \dH(X_k, X)> 2 \eta \right) \le  \frac{2^b}{a \eta^b}
		e^{-ka\eta^b},
	\end{equation}
	where $\dH$ denotes the Hausdorff distance in $(X,d_X)$.
\end{lemma}

The gist of \cref{assumption:mod_cont_reg} is that, if $\omega(\delta)$
becomes too small compared to~$\delta$ as $\delta\to 0$, then
$\omega$-continuous
functions become constant, and the error induced by our estimator, even though very
small, can no longer be controlled {in terms of the}  modulus of continuity $\omega$.
See \cref{lemma:no_modcont_assumption} for more details.
This assumption encompasses several forms of  regularity, {including Lipschitz and Hölder continuity}.
Note that \cref{assumption:mod_cont_reg} differs from the
assumption in \cite[Section 3.2]{carriere2018statistical},  {which requires}
the map ${\delta\mapsto\frac{\omega(\delta)}{\delta}}$ {to be}
non-increasing.
In fact, when $\omega \neq 0$, this monotonicity condition implies \cref{assumption:mod_cont_reg}  (see \cref{lemma:ours_vs_bertrand_assumption}),
so our assumption is more general.


In the following, we consider two different scenarios:
first, when the
constants~$a,b$ for which $\mu$ is $(a,b)$-standard are known
(Section~\ref{sec:ab_known});
second, when these constants are unknown
(Section~\ref{sec:ab_unknown}).
In the latter scenario, \cref{assumption:manifold_support} is required as well, in order
to control the vanishing rate of the sampling error $\dH(X_k,X)$ as
$k\to\infty$.

\subsection{Known regularity of $\mu$}\label{sec:ab_known}
When $a,b$ are known, one can compute an asymptotically optimal
sequence~$(\delta_k)_k$ of values for parameter $\delta$ depending on the
sample size~$k$:

\begin{equation}\label{eq:delta_k}
	\delta_k := 4 {\left( \frac{2\log(k)}{ak} \right)}^{\frac 1 b}.
\end{equation}

\begin{proposition}\label{prop:cv_rate_known}
	Under
	\cref{assumption:ab_std,assumption:compact_support,assumption:admit_modcont,assumption:mod_cont_reg},
	for $\estgeneric$ chosen among
	the following $X_k$-measurable estimators:
	\begin{equation*}
		\estoffsetd, \quad \estcechd,\quad \textnormal{or }\estripsd,
	\end{equation*}
	we have the following convergence rate {for any fixed threshold~$\thresh>0$}:
	\begin{equation}\label{eq:cv_rate_known}
		\mathbb{E}_{X_k\sim \mu^{\otimes k}} \left[
			\min\left\{\dIo\left(
			{\oracle},\,
			\estgeneric
			\right),\,  D\right\}
			\right]
		\lesssim
		\omega
		\left(
		{ 2 }
		\delta_k
		\right)
		= \omega \left(
		8\left( \frac{2\log(k)}{ak} \right)^{\frac 1 b}
		\right),
	\end{equation}
	where
	the multiplicative constant depends only on $a, b, \rhox, \omega$, and~$\thresh$.
\end{proposition}
%

Note that the estimator $\estgeneric$ is viewed as a random variable taking
values in the space of 
persistence modules, equipped with the
interleaving distance $\dIo$ as an extended pseudo-metric. The modulus of
continuity provides adaptive convergence rates with respect to
the regularity of the target function $\ff$.
These rates are typically of the order $ c\left( \frac{\log(k)}{ak} \right)^{\frac 1 b}$
for $c$-Lipschitz functions,
or
$\left( \frac{\log(k)}{ak} \right)^{\frac \alpha b}$ for $\alpha$-Hölder functions.

Note also that thresholding the interleaving distance at a fixed value~$\thresh$ is necessary for the result to hold, because the interleaving distance may take infinite values on events of small yet positive measure; see \cref{lemma:stats:needs_thresholding} for details.
		{Thresholding $\dIo\left({\oracle},\, \estgeneric\right)$ at~$\thresh$  can be implemented  by restricting the modules $\oracle$ and~$\estgeneric$ to the downset of $(\min \ff_1+2\thresh,\, \cdots,\, \min \ff_{n}+2\thresh)$, where $\ff_1, \dots, \ff_n$ are the components of~$\ff$, and then re-extending the modules to~$\mathbb{R}^n$ by padding with zero vector spaces. This operation has only marginal effect on the modules when \[\thresh>\left\|\left(\max \ff_1-\min \ff_1, \,\cdots,\, \max \ff_n-\min \ff_n\right)\right\|_\infty.\]}

                \begin{proof}[Proof of \cref{prop:cv_rate_known}]
	The proof follows closely the argument of {\cite[Proposition~11]{carriere2018statistical}}.
	Under \cref{assumption:compact_support,assumption:admit_modcont}, \cref{thm:estimator_fixed_radius} can be applied on the event:
	\begin{equation}
		A_k := \left\{ 2\dH(X_k,X) \le \delta_k \le \rhox/2 \right\}.
	\end{equation}
	For sufficiently large sample size $k$, we have $\delta_k\le \min \left\{  \rhox/2,1 \right\}$ deterministically.
	Therefore, in the following we assume that $k$ is large enough and then $A_k$ becomes the probabilistic event $\{2d_{\textnormal{H}}(X_k,X)\le \delta_k \}$.
	Following the proof of {\cite[Proposition 11]{carriere2018statistical}},
	under \cref{assumption:ab_std},
	this choice of $\delta_k$ with \cref{lemma:cuevas}
	ensures that the following upper bound is valid:
	\begin{equation}\label{eq:approx_cuevas_ineq}
		\Pbb (\statcompl{A_k})
		= \Pbb\left(d_{\textnormal{H}}(X_k,X) > \frac{\delta_k}{2}\right)
		\le
		\frac {2^b}{a \left( \frac {2\log(k)}{ak} \right)}
		e^{-ka \left( \frac{2\log(k)}{ak} \right)}
		\leq \frac{2^b}{2k\log(k)}.
	\end{equation}

	On the event $A_k$, \cref{thm:estimator_fixed_radius} applies. On the complementary event $\statcompl{A_k}$, the  approximation error
	$\dIo\left(\oracle,\, \estgeneric\right)$ may be infinite (\cref{lemma:stats:needs_thresholding}), but the thresholded error is bounded above by~$\thresh$.
	This leads to the following inequality:
	\begin{equation}\label{eq:stats:err_appr_bound}
		\min\left\{\dIo\left(
		{\oracle},\,
		\estgeneric
		\right),\,  D\right\}
		\lesssim
		\ind{\statcompl{A_k}}\cdot \thresh + \ind{A_k}\cdot \omega( { 2 } \delta_k)
		\lesssim
		\ind{\statcompl{A_k}}+  \omega( 2\delta_k).
	\end{equation}
	Taking the expectation,
	\Cref{eq:approx_cuevas_ineq,eq:stats:err_appr_bound} yield
	\begin{equation}
		\mathbb{E}_{X_k\sim \mu^{\otimes k}} \left[
			\min\left\{\dIo\left(
			{\oracle},\,
			\estgeneric
			\right),\,  D\right\}\right]
		\lesssim \frac{2^b}{2k\log(k)}
		+ \omega(2\delta_k).
	\end{equation}
	Finally, since the first term is bounded by $(\delta_k)^b$,
	it is at most
	$\delta_k$
	(as $\delta_k\le 1$ and $b\ge 1$ by \cref{lemma:b_ge_1})
	up to a multiplicative constant.
	Thus, we have:
	\begin{equation}\label{eq:stats:known_f_and_reg}
		\mathbb{E}_{X_k\sim \mu^{\otimes k}} \left[
			\min\left\{\dIo\left(
			{\oracle},\,
			\estgeneric
			\right),\,  D\right\}
			\right]
		\lesssim \delta_k + \omega( 2\delta_k).
	\end{equation}
	We conclude using \cref{assumption:mod_cont_reg}.
\end{proof}



\subsection{Unknown regularity of $\mu$}
\label{sec:ab_unknown}
When the constants $a$ and $b$ are not known,
\cref{prop:cv_rate_known} still applies but the values $\delta_k$ are not available a priori.
We address this issue by introducing an estimator $(\hat\delta_k)_k$ of the
sequence $(\delta_k)_k$ in \cref{eq:stats:hatdeltak}, together with the corresponding plug-in estimators in \cref{eq:stats:def_estimators_plugin}.
In \cref{prop:cv_rate_unknown_ab}, we show that the plug-in estimators preserve the convergence rate, up to an additional logarithmic factor.

Our estimator of $(\delta_k)_k$ is based on the {subsampling} strategy of \cite{carriere2018statistical}.
Fix $\beta > 0 $, and define the sequence
$s= (s_k)_{k\in \mathbb{N}}$ by
$s_k = \left\lceil\frac { k}{\left( \log (k) \right)^{1+\beta}}\right\rceil$. We then define the sequence of random variables
$(\hat \delta_k)_{k\in \mathbb{N}}$ by
\begin{equation}\label{eq:stats:hatdeltak}
	\hat\delta_k :=
	d _{\textnormal{H}}(X_{s_k},X_k),
\end{equation}
where $X_{s_k}\subseteq X_k$ consists of the first $s_k$ sample points of~$X_k$.

\begin{proposition}\label{prop:cv_rate_unknown_ab}
	Under	\cref{assumption:admit_modcont,assumption:mod_cont_reg,assumption:ab_std,assumption:compact_support,assumption:manifold_support},
	for \estgeneric{} chosen among the following {$X_k$-measurable} estimators:
	\begin{equation}\label{eq:stats:def_estimators_plugin}
		\estoffsetdh, \quad \estcechdh,
		\quad
		\textnormal{or}
		\quad
		\estripsdh,
	\end{equation}
	we have the following convergence rate
	for any
		{fixed threshold~$\thresh>0$}:
	\begin{equation*}
		\mathbb E_{{X_k\sim \mu^{\otimes k}}} \left[ \min\left\{\dIo\left({\oracle},\, \estgeneric\right),\, {\thresh}\right\} \right]
		\lesssim
		\mathbb{E}_{{X_k\sim \mu^{\otimes k}}}\left(\omega(2\hat\delta_k)\right)
		\lesssim  \omega \left[  4\left( \frac{(\log(k))^{2+\beta}}{ak} \right)^{\frac 1 b}\right],
	\end{equation*}
	where the multiplicative constant depends only on
	$a, b, \rhox, \omega$, {and $\thresh$}.
\end{proposition}

\begin{proof}
	The proof is similar to that of  \cref{prop:cv_rate_known} but involves additional technical details. Consider the event on which \cref{thm:estimator_fixed_radius} applies:
	\begin{equation}
		A_k := \left\{ 2d_{\textnormal{H}}(X_k,X) \le \hat\delta_k \le \rhox/2 \right\}.
	\end{equation}
	Following the proof of \cref{prop:cv_rate_known}, we obtain the upper bound
	\begin{equation}\label{eq:di_bound}
		\min\left\{ \dIo\left({\oracle},  \estgeneric \right),D \right\}
		\lesssim
		\ind{\statcompl{A_k}}\cdot {\thresh} + \ind{A_k}\cdot  \omega(2\hat\delta_k)
		\lesssim
		\ind{\statcompl{A_k}}+  \omega(2\hat\delta_k)
		.
	\end{equation}
	It remains to show that,
	in expectation,
	{$\omega(2\hat\delta_k)$} is the leading term; that is
	$\Pbb(\statcompl{A_k}) \lesssim \mathbb{E}\left( \omega(2\hat\delta_k) \right)$. {By the union bound, we have}
	\begin{equation}\label{eq:stats:ak_to_control}
		\mathbb{P}\left( \statcompl{A_k} \right)
		\le \Pbb \left( d_{\textnormal{H}}(X_{k},X) > \frac
		{\hat\delta_{k}}{2} \right)
		+
		\Pbb \left( \hat\delta_k > \frac \rhox 2 \right).
	\end{equation}
	We begin with the second term on the right-hand side of \cref{eq:stats:ak_to_control}, for which a rough upper bound is obtained
	as follows. The first inequality uses the fact that
		{$X_{s_k}\subseteq X_k\subseteq X$, hence $\dH(X_{s_k}, X)\geq
				\dH(X_{s_k}, X_k)$,}
		{while the second inequality follows from \cref{lemma:cuevas}:}
	\begin{equation}\label{eq:stats:ak_to_control1}
		\mathbb P\left(\hat\delta_k > \frac{\rhox} {2}\right)
		\le \mathbb P\left(d_{\textnormal{H}}(X_{s_k},X)> \frac{\rhox}2\right)
		{\le  \frac {2^b}{a\left(\frac {\rhox} 4\right)^{b} }
				e^{-a(\frac{\rhox}4)^b \left\lceil\frac{k}{\left(\log (k)\right)^{1+\beta}}\right\rceil}}
		\lesssim e^{-\log(k)} = \frac 1 {k}.
	\end{equation}
	The first term on the right-hand side of \cref{eq:stats:ak_to_control} corresponds to the (B) term in
	the proof of \cite[Proposition~13]{carriere2018statistical}. For sufficiently large $k$, it can be bounded in expectation using a packing argument under \cref{assumption:manifold_support} (see \cref{lemma:stats:packing_bootstrap}):
	\begin{equation}\label{eq:stats:ak_to_control2}
		\mathbb{P} \left( d_{\textnormal{H}}(X_{k},X)
		>\frac
		{\hat\delta_{k}}{2} \right)
		\le
		\frac{2^b}{2k\log(k)}.
	\end{equation}
	Therefore, combining \cref{eq:stats:ak_to_control,eq:stats:ak_to_control1,eq:stats:ak_to_control2}, we have
	\begin{equation}\label{eq:stats:ak_to_control_overall}
		\mathbb{P}\left( \statcompl{A_k} \right)\lesssim \frac{1}{k}+\frac{2^b}{2k\log(k)} \lesssim  \frac 1 k.
	\end{equation}
	Invoking \cref{assumption:manifold_support} once more, we obtain the quasi-minimax bound on $\hat\delta_k$ (see \cref{lemma:minimax_distance_sample}):
	\begin{equation}\label{eq:lower_bound_hat_delta}
		\Ebb \left( \hat\delta_k \right) \gtrsim \frac 1 {k^{\frac 1 b}} \ge \frac 1 k \gtrsim \mathbb P(A^c_k)
		\quad \textnormal{and}\quad
		\Ebb \left( \omega(2\hat\delta_k) \right)\lesssim\omega \left[  4\left(
			\frac{2\log^{2+\beta}(k)}{ak} \right)^{\frac 1 b}\right].
	\end{equation}
	We conclude with \cref{assumption:mod_cont_reg}.
\end{proof}

\subsection{Minimum rate}
\cref{prop:cv_rate_known,prop:cv_rate_unknown_ab} provide
consistent estimators for $\hsf$ based on a $k$-sample $X_k$ of $\mu$.
A natural question is then: does there exist
another $X_k$-based estimator
$\estgeneric$ achieving better rates?
The answer is negative as per the following proposition.

\begin{proposition}[Minimum rate]\label{prop:minimax_rate}
	For any fixed threshold $D\ge 1$ and
	homology degree $i<b$,
	we have the following lower bound:
	\begin{equation}\label{equ:mini_rate}
		\sup_{\substack{n\geq 1 \\[0.5ex] \textnormal{$X$ satisfying~\ref{assumption:compact_support}}\\[0.5ex]   \textnormal{$\ff\colon X\to \rn$ satisfying \ref{assumption:admit_modcont}}}}
		\,
		\inf_{\widehat{\Homology_i(\ff)}}
		\,
		\sup_{\substack{\mu \in \abstd X}}
		\mathbb{E}_{X_k\sim \mu^{\otimes k}}
		\left[ \min\left\{\dIo\left({\Homology_i(\restr \ff {\mathrm{supp}(\mu)})} ,\, {\widehat{\Homology_i(\ff)}} \right),\, \thresh \right\}\right]  \gtrsim \omega \left( \frac 1 2 \left( \frac 1 {ak} \right)^{ \frac{1}{b}} \right),
	\end{equation}
	where the multiplicative constant depends only on
	$a, b, \rhox, \omega$ {and $\thresh$}.
\end{proposition}
\begin{proof}
	The proof  follows closely the arguments in \cite[Proposition 12]{carriere2018statistical} and \cite[Section B.2]{chazal2014convergence}.
	The key step in establishing the lower bound is Le Cam's lemma
	(\cref{lecams_lemma}), which reduces the problem to constructing
	two $(a,b)$-standard measures such that the target function on their support induces persistence modules as different as possible, while
	the total variation between these two measures remains of order $\frac 1 k$, for a sample size $k$.

	Fix a sample size $k\in \mathbb{N}_{> 0}$. For the outmost $\sup$ in~\cref{equ:mini_rate}, we consider the special case where $n=1$, $X=([0,1]^b, \left\lVert \cdot \right\rVert_\infty)$ and function $\ff:X\to \RRR$ is defined as
	\begin{equation}\label{eq:lower_bound_function}
		\ff\colon y\in [0,1]^b \longmapsto
			-\omega\left(\min \left\{d_X\left(y,\left\{ \frac 1 2 x_k \right\}^{ i+1}\times [0,1]^{ b-i-1}\right)
			, \frac 1 2 x_k\right\}\right), \quad
		\textnormal{where }
		x_k := (ak)^{-\frac 1 b}.
		\end{equation}
	In the following we assume that $k$ is large enough, so that $x_k\le 1$ and
	$\omega(x_k)\le D$.
	Note that $\ff(y)$ is the opposite of the (thresholded) distance of $y$ to the convex set
	$\left\{ \frac 1 2 x_k \right\}^{ i+1}\times [0,1]^{ b-i-1}$ in $[0,1]^b$.
	Therefore, $\ff^{-1}((-\infty, t])$
	is empty  when $t<- \omega(\frac 1 2 x_k)$, homotopy equivalent to the  $i$-sphere $\mathbb S^{i}$ if $- \omega(\frac 1 2 x_k) \le t<0$ and contractible
	otherwise. For example, when $b=2$, $i=0$ and $X=[0,1]^2$, the sublevel set $\ff^{-1}((-\infty, t])$ is homotopy equivalent to $\mathbb{S}^0$ for any $t\in [-\omega(\frac{1}{2}x_k),0)$. When $b=3$, $i=1$ and $X=[0,1]^3$, the sublevel set $\ff^{-1}((-\infty, t])$ is homotopy equivalent to $\mathbb{S}^1$ for any $t\in [-\omega(\frac{1}{2}x_k),0)$.

	By the subadditivity and non-increasing property  of the modulus of
	continuity $\omega$, the function $\ff$ is $\omega$-continuous. Then consider the two probability distributions on the metric space $X$:
	\begin{equation*}
		P_0 := \delta_0 \quad
		\textnormal{ and }\quad
		P_{1} := \left( 1 - \frac 1 k  \right) P_0 + \frac 1k
		\mathcal{U}_{[0, x_k]^b},
	\end{equation*}
	where $\delta_0$ is the dirac measure based on $0$, and
	${\mathcal{U}_{[0, x_k]^b}}$
	is the uniform measure on
	$[0, x_k]^b$.

	Trivially, $P_0$ is $(a,b)$-standard.
	For any $x\in [0, x_k]^b$, and positive $r>0$, we have:
	\begin{equation*}
		P_{1} (B_X(x,r)) \ge \frac 1 k \mathcal U_{[0, x_k]^{b}}(B_X(x,r))
		\ge
		\min\left\{
			\frac{1}{k}((ak)^{\frac{1}{b}}r)^b 
			,1\right\} = \min\{ar^b, 1\},
	\end{equation*}
	where the second inequality follows from the fact that the density of  $  \mathcal
		U_{[0,x_k]}$ with respect to the Lebesgue measure is $(x_k)^{-1}= (ak)^{\frac 1 b}$.
	This concludes that  $P_{1}$ is also $(a,b)$-standard.

	We now consider the pseudo-distance $\rho$ between persistence modules,
	defined by:
	\begin{equation*}
		\rho(M,N) : = \min \left\{\dIo \left( M, N \right),D\right\},
	\end{equation*}
	and the map $\theta \colon P\mapsto \Homology_i (\restr \ff {\mathrm{supp}(P)})$ on the $(a,b)$-standard measures $P$.
	Using Le Cam's lemma (\cref{lecams_lemma}), for any fixed estimator $\hat \theta$ based on $X_k$ we get the lower bound:
	\begin{equation*}
		\sup_{P\in \abstd{{[0,1]^b}}}
		\Ebb_{X_k\sim P^{\otimes k}}
		\left[ \rho \left( \hat \theta, \theta(P) \right) \right]
		\ge
		\frac 1 8
		\rho (\theta(P_0), \theta(P_{1})) \left( 1 - \mathrm{TV}\left( P_0,P_{1} \right) \right)^{2k}.
	\end{equation*}
	It remains to control this lower-bound.
	We start with the first multiplicative term,
	which contains two cases.
	In the first case, where $i=0$, we have: 
	\begin{equation*}
		\Homology_0 (\restr \ff {\mathrm{supp}(P_0)}) \simeq
		\kkk^{\left[-\omega( \frac 1 2 x_k ), +\infty\right)}
		\quad \textnormal{and} \quad
		\Homology_0 (\restr \ff {\mathrm{supp}(P_{1})}) \simeq
		\kkk^{\left[-\omega( \frac 1 2 x_k ), +\infty\right)}
		\oplus
		\kkk^{\left[-\omega(\frac 1 2 x_k ),0\right)},
	\end{equation*}
	and in the second case, where $i>0$, we end up with:
	\begin{equation*}
		\Homology_i (\restr \ff {\mathrm{supp}(P_0)}) \simeq
		0
		\quad \textnormal{and} \quad
		\Homology_i (\restr\ff{\mathrm{supp}(P_{1})}) \simeq
		\kkk^{\left[-\omega(\frac 1 2 x_k ),0\right)}.
	\end{equation*}
	Therefore, we have in both cases:
	\begin{equation*}
		\rho(\theta(P_0),\theta(P_1)) =
		\min\left\{ \frac 1 2 \omega \left( \frac 1 2 x_k \right),D \right\}
		=
		\min\left\{ \frac 1 2 \omega \left( \frac 1 {2({ak})^{\frac 1 b}} \right),D \right\} .
	\end{equation*}
	It remains to show that the total variation is of correct order:
	\begin{equation*}
		\mathrm{TV} \left( P_0,P_{1} \right)
		= \sup_{A\subseteq {[0,1]^b} \textnormal{ measurable}}
		\left\lvert P_0(A) - P_{1}(A) \right\rvert
		= \sup_{A\subseteq {[0,1]^b} \textnormal{ measurable}}
		\left\lvert \frac 1 k (\delta_0 - \mathcal{U}_{{[0,x_k]^b}})(A)  \right\rvert
		= \frac 1 k.
	\end{equation*}
	This concludes that:
	\begin{equation*}
		\sup_{P\in \abstd {{[0,1]^b}}}
		\Ebb_{X_k\sim P^{\otimes k}}
		\left[ \rho \left( \hat \theta, \theta(P) \right) \right]
		\ge
		\frac 1 {8}
		\left(
			\min \left\{ \frac 1 2\omega \left( \frac 1 {2(ak)^{\frac 1 b}} \right),D \right\}
			\right)
		\left( 1 - \frac 1 k \right)^{2k}
		\gtrsim \omega \left( \frac 1 {2(ak)^{\frac 1 b}} \right).
	\end{equation*}
	Since $\hat{\theta}$ can be any estimator based on $X_k$, the lower bound in~\cref{equ:mini_rate} holds.
\end{proof}

\begin{remark}[Quasi-minimax rate]\label{rem:quasi-minimax}
	A consequence of \cref{prop:minimax_rate} is that the estimators $\estgeneric$ of $\oracle$ given in \cref{prop:cv_rate_known,prop:cv_rate_unknown_ab} are \emph{consistent and {$\omega$-}quasi-minimax} estimators.
	When $X$ contains an open subset of $\mathbb{R}^d$, this rate is known to be minimax for
	specific homology degrees---see for instance
	\cite[Theorem~5]{chazal2014convergence}.
	To our knowledge, this rate is not known to be minimax on general geodesic
	spaces.
\end{remark}

%


\section{Estimators Computation}
\label{sec:estim_comput}

In this section we describe our options for computing the estimator $\ms$, hence also $\msd$ for any fixed $\delta$ by restriction.

\subsection{Filtrations computation}
The estimator comes from the function-Rips filtration $\RR^{\bullet}(\FP)$. This is a multifiltration of the power set  $2^P\setminus\{\emptyset\}$,  whose  size (i.e., total number of simplices) is $2^{|P|}-1$ and can be reduced to $O(| P|^{r+2})$ by constructing only its $(r+1)$-skeleton if homology in degree~$r$ is to be considered. Evidently, this upper bound is practical only for very small values of $r$.


As mentioned in introduction, function-geometric multifiltrations in general have received attention in the particular case where $n=1$, $X$ is a Euclidean space, and $\ff$ is some  density estimator. In this setting, various bifiltrations have been proposed with controllable size and provable equivalence or closeness, in terms of persistent homology, to our filtrations $\OO^{\bullet}(\FP)$, $\CC^{\bullet}(\FP)$, $\RR^{\bullet}(\FP)$ or to some variants thereof---see e.g.~\cite{alonso2024sparse,alonso2024delaunay,buchet2024sparse,lesnick2024sparse}.
It would make sense to investigate possible  extensions of these constructions to  our setup, especially to the approximation of our smoothed version $\hs(\Rinc(\FP))$ of the persistent homology of the function-Rips filtration. This would help drastically reduce the size of the filtrations involved in the construction of our estimators. 
 It would also make sense to consider larger values of $n$, not just $n=1$, and larger classes of functions~$\ff$ beyond density estimators. 

\subsection{Computing a presentation of $\ms$}\label{sec:inv_compute}
Assuming the filtration $\RR^{\bullet}(\FP)$ has been built, we now turn to the computation of a free presentation of $\ms$, from which many invariants—such as multigraded Betti numbers—can be derived. The difficulty stems from the fact that the module is not induced in homology by a single filtration, but by the inclusion between two nested filtrations $\RR^{\bullet}(\FP)\hookrightarrow \RR^{2\bullet}(\FP)$. Specifically, we want a free presentation of the image of a morphism between \fp{} persistence modules induced in homology from this inclusion between filtrations. To the best of our knowledge, no efficient method for obtaining it has been described in the literature, so we provide one here. A simpler variant to obtain a free presentation of the cokernel is described in \cite[Proposition~4.14]{dey2025decomposing}.

We begin by addressing a more general problem: computing a free presentation of the image of any morphism of persistence modules $\tau:M\to N$, and then we apply the results to $\ms$.
Section~\ref{sec:construct_presentation} outlines the mathematical construction of a free presentation of $\im(\tau)$ and then provides Algorithm~\ref{alg:compute_presentation_imfg} to compute such a free presentation. In Section~\ref{sec:compute_presentation_smoothing}, we apply Algorithm~\ref{alg:compute_presentation_imfg} to obtain a free presentation of $\ms$. Finally, Section~\ref{sec:presentation_degree_zero} addresses the special case of homology in degree~0, for which the situation is much simpler.

\subsubsection{From presentations of individual persistence modules to presentations of images of morphisms}\label{sec:construct_presentation}
Let $\tau:M\to N$ be a morphism  between finitely presented persistence modules, with respective free presentations  $p_1:P_1\to P_0$ and $q_1:Q_1\to Q_0$.  Since $P_0$ is projective, we can lift $\tau$ to a map $\gamma:P_0\to Q_0$ so that the following diagram commutes:
\begin{equation}\label{equ:commute_diagram_known}
\begin{tikzcd}
P_1 \arrow[r, "p_1"] & P_0 \arrow[r, "p_0",two heads] \arrow[d, "\gamma"] & M \arrow[r] \arrow[d, "\tau"] & 0 \\
Q_1 \arrow[r, "q_1"] & Q_0 \arrow[r, "q_0",two heads]                     & N \arrow[r]                           & 0.
\end{tikzcd}
\end{equation}
From this diagram, we can derive a free presentation of $\im(\tau)$ as follows.

Consider the pullback $P'_1$ of the subdiagram $P_0\xrightarrow{\gamma} Q_0 \xleftarrow{q_1} Q_1$, 
i.e., the kernel of the morphism $P_0\oplus Q_1 \xrightarrow{\gamma - q_1} Q_0$, where $(\gamma - q_1)(u,v) \coloneq \gamma(u)-{q_1}(v)$ for any $(u,v)\in P_0\oplus Q_1$. 
Let $\pi_0:P'_1\to P_0$ and $\pi_1:P'_1\to Q_1$ denote the canonical projections from $P'_1\subseteq P_0\oplus Q_1$ to $P_0$ and $Q_1$, respectively. Next, let $P'_2$ be the free cover of $P'_1$, yielding the surjection $\beta:P'_2\twoheadrightarrow P'_1$.  We illustrate these morphisms in the following commutative diagram, where $P'_1, P_0, Q_0, Q_1$ form a pullback square:

\begin{equation}\label{equ:diagram_pre_im}
\begin{tikzcd}
P'_2 \arrow[r, "\beta", two heads] & P'_1 \arrow[rrd, "\pi_0", bend left] \arrow[rdd, "\pi_1"', bend right] &                      &                                          &                                                        &   \\
                                   &                                                                        & P_1 \arrow[r, "p_1"] & P_0 \arrow[r, "p_0",two heads] \arrow[d, "\gamma"] & M \arrow[r] \arrow[d, "\tau"] & 0 \\
                                   &                                                                        & Q_1 \arrow[r, "q_1"] & Q_0 \arrow[r, "q_0",two heads]                     & N \arrow[r]                           & 0
\end{tikzcd}
\end{equation}

Then the following proposition yields a free presentation of $\im(\tau)$.
\begin{proposition}\label{prop:math_present_img}
The sequence $P'_2 \xrightarrow{\pi_0\circ\beta} P_0 \xrightarrow{\eta} \im(\tau)\to 0$ is exact, where $\eta$ comes from the factorization of $\tau \circ p_0$ through its image $\im(\tau)$. Consequently, the morphism $\pi_0\circ\beta:P'_2\to P_0$ is a free presentation of $\im(\tau)$, and in fact a finite free presentation..
\end{proposition}
\begin{proof}
First, the morphism $\eta$ is surjective since it is equal to the composition 
$ P_0\xtwoheadrightarrow{p_0} M \twoheadrightarrow \im(\tau),
$.
Then we have to prove $\ker(\eta)=\im(\pi_0\circ\beta)$, which comes from the fact that 
 $\ker(\eta)=\ker(\tau\circ p_0)=\ker(q_o\circ \gamma)=\gamma^{-1}(\ker(q_0))=\gamma^{-1}(\im(q_1))=\im(\pi_0)=\im(\pi_0\circ\beta)$. Therefore, the sequence is exact. By definition, $P_2'$ and $P_0$ are free modules, so the morphism $\pi_0\circ\beta$ is a free presentation of $\im(\tau)$.

 Moreover, since $M$ and $N$ are \fp{}, the modules $P_0$, $Q_0$ and $Q_1$ are of finite rank. Thus, the module $P_0\oplus Q_1$ is also of finite rank. By~\cite[Lemma~3.14]{bjerkevik2021ell}, the kernel $P'_1$ of the morphism $\gamma - q_1: P_0\oplus Q_1 \to Q_0$  is therefore \fp{}, so its free cover $P'_2$ is of finite rank, which concludes the proof that $\pi_0\circ\beta$ is a finite free presentation of $\im(\tau)$.
\end{proof}

\begin{algorithm}[H]
\caption{Computing a free presentation of $\im(\tau)$} 
\label{alg:compute_presentation_imfg}
\begin{algorithmic}[1]
\Statex \textbf{Input:} A $k\times m_1$ multigraded matrix $\Gamma$ that represents the morphism $\gamma:P_0\to Q_0$.  A $k\times m_2$ multigraded matrix $\Upsilon$ that represents the morphism $q_1:Q_1\to Q_0$. $\Gamma$ and $\Upsilon$ should share the same basis of $Q_0$.
\Statex \textbf{Output:} An $m_1\times m$ multigraded matrix $R$ that is a free presentation of $\im(\tau)$.
\State  $D \gets [\Gamma\mid-\Upsilon]$ \Comment{Concatenate $\Gamma$ and $-\Upsilon$ column-wise.}
\State $S\gets \textsf{KerMinGen}(D)$    \Comment{Compute the minimal generators of $\ker(\gamma-q_1)$.}
\State $R\gets S[1:m_1,*]$    \Comment{$R$ consists of the first $m_1$ rows of $S$.}
\State \textbf{return} $R$ 
\end{algorithmic}
\end{algorithm}

\begin{figure}[H]
  \centering
  \begin{minipage}[b]{0.5\textwidth}
    \centering
    \includegraphics[width=\linewidth]{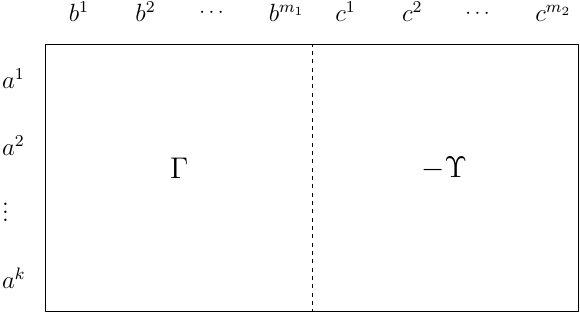}
    \caption*{(a)}
    \label{fig:visual1}
  \end{minipage}
  \hspace{1cm}
  \begin{minipage}[b]{0.3\textwidth}
    \centering
    \includegraphics[width=4cm]{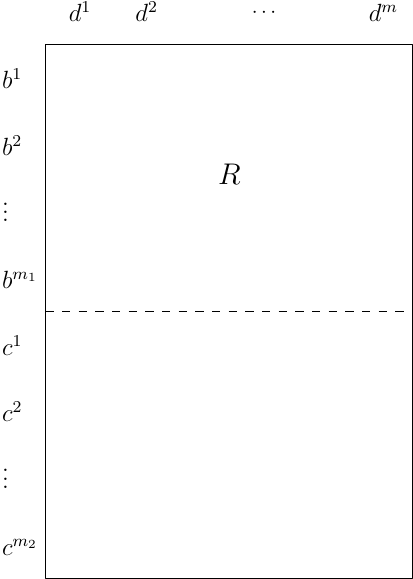}
    \caption*{(b) }
    \label{fig:visual2}
  \end{minipage}
  \caption{Visualization of the input and output of Algorithm~\ref{alg:compute_presentation_imfg}. \textbf{(a)}: The multigraded matrix $D$ that is the concatenation $\Gamma$ and $-\Upsilon$ by columns. \textbf{(b)}: The multigraded matrix $S$, whose first $m_1$ rows form the output multigraded matrix $R$.}
  \label{fig:visualize_alg}
\end{figure}

Proposition~\ref{prop:math_present_img} gives rise to~\cref{alg:compute_presentation_imfg} for computing a free presentation of $\im(\tau)$, assuming that the morphisms $\gamma$ and $q_1$ between free modules are provided as inputs as the form of multigraded matrices $\Gamma$ (for $\gamma$) and $\Upsilon$ (for $q_1$) sharing the same basis of $Q_0$.  The output of Algorithm~\ref{alg:compute_presentation_imfg} is a multigraded matrix $R$ that represents the morphism $\pi_0\circ\beta$, which is a free presentation of $\im(\tau)$.

Recall that the persistence module $P'_1$ is the kernel of the morphism $\gamma-q_1:P_0\oplus Q_1\to Q_0$. To obtain the morphism $\pi_0\circ \beta$, which serves as a free presentation of $\im(\tau)$, we first compute the minimal set of generators of $\ker(\gamma-q_1)$, where each generator is a linear combination of basis elements of $P_0\oplus Q_1$, then we project these minimal generators onto $P_0$, as detailed in Algorithm~\ref{alg:compute_presentation_imfg}.




 We now provide a detailed explanation of Algorithm~\ref{alg:compute_presentation_imfg}. In line 1, as shown in Figure~\ref{fig:visualize_alg}(a), we concatenate the multigraded matrices $\Gamma$ and $-\Upsilon$ column-wise to form a new multigraded matrix $D$, representing the morphism $\gamma-q_1: P_0 \oplus Q_1\to Q_0$. Note that $\Upsilon$ and $-\Upsilon$ share the same row and column grades; $-\Upsilon$ is obtained by multiplying each entry of $\Upsilon$ by $-1$.
In line 2, we compute the minimal generators of $\ker(\gamma-q_1)$ using \textsf{KerMinGen}, a function designed to compute the minimal generators of the kernel of any morphism between free modules (see below). As shown in Figure~\ref{fig:visualize_alg}(b),
 the output of \textsf{KerMinGen} in line 2 can be written as a multigraded matrix $S$ of size $(m_1+m_2)\times m$. Each row of $S$ corresponds to a basis element of either $P_0$ or $Q_1$, with the row grade reflecting the grade of the corresponding basis element. Each column of $S$ corresponds to a generator of $\ker(\gamma-q_1)$, with the column grade indicating the birth time of that generator. The length of each column in $S$ is $m_1+m_2$, coming from the fact that each generator is a linear combination of basis elements from $P_0$ and $Q_1$.
In line 3, we take the first $m_1$ rows of $S$ to project the minimal set of generators of $\ker(\gamma-q_1)$ to $P_0$ and obtain the $m_1\times m$ multigraded matrix $R$ representing the morphism $\pi_0\circ \beta:P'_2 \to P_0$, which is a free presentation of $\im(\tau)$.

The complexity of Algorithm~\ref{alg:compute_presentation_imfg} is determined by that of \textsf{KerMinGen}. In general, Schreyer's algorithm is applicable to compute the generators of the kernel of a morphism between free modules, but its worst-case time complexity is doubly exponential with respect to the number of parameters and it is notably slow on large-scale datasets in practical applications \cite{schreyer1980berechnung,cox2005using,erocal2016refined,la1998strategies}. Fortunately, efficient algorithms  exist for one- and two-parameter persistence modules. In the one-parameter case, $\ker(\gamma-q_1)$ can be computed via Gaussian elimination, with a worst-case time complexity of $O(k\cdot (m_1+m_2)^2)$ \cite[Algorithm~3]{lesnick2022computing}. In the two-parameter case, the specialized algorithm from \cite[Algorithm~7]{lesnick2022computing} runs in $O((m_1+m_2)(m_1+m_2+k)\min(k,m_1+m_2))$ time, and the queue strategy proposed by \cite[Section~4]{kerber2021fast} can be applied to significantly improve both speed and memory efficiency in practice.

\subsubsection{Applying Algorithm~\ref{alg:compute_presentation_imfg} to $\ms$}\label{sec:compute_presentation_smoothing}

Algorithm~\ref{alg:compute_presentation_imfg} can be directly applied to compute a free presentation of $\ms$, given a multigraded matrix $\Upsilon$ of size $k\times m_2$ that represents a free presentation of $\hs(\RR^{2\bullet}(\FP))$. In this setting, a free presentation of $\hs(\RR^{\bullet}(\FP))$ is obtained simply by rescaling the grades in $\upsilon$. Then, $m_1=k$ and the multigraded matrix $\Gamma$ is the identity matrix $\id$ of size $k\times k$, where each row grade is defined as $\gr(\Gamma[i,*]):=\gr(\Upsilon[i,*])$, and each column grade is defined as 

\[\gr(\Gamma[*,i]):= (2,\underbrace{1, 1, \ldots, 1}_{n\text{ times}})\odot \gr(\Gamma[i,*])\in\RRR^{n+1},\] 
for any index $i\in\{1,\ldots, k\}$, where $\odot$ denotes the Hadamard (componentwise) product.
In other words, using the notations from Figure~\ref{fig:visualize_alg}(a), we have 
\[b^i= (2, \underbrace{1, 1, \ldots, 1}_{n\text{ times}}) \odot a^i = (2a^i_1,a^i_2,\ldots,a^i_{n+1})\in\RRR^{n+1},\] 
where $a^i= (a^i_1,a^i_2,\ldots,a^i_{n+1})\in\RRR^{n+1}$.

We implemented this algorithm as part of the \texttt{multipers}~library~\cite{multipers} for the case where $\ff$ is a real-valued function, so that $\ms$ is a two-parameter persistence module. We leverage Kerber and Rolle's queue strategy to accelerate the computation \cite[Section~4]{kerber2021fast}. 

\subsubsection{The case of homology in degree~$0$}\label{sec:presentation_degree_zero}
This setting is very special because of the following isomorphism of persistence modules:
\begin{align}\label{eq:isom_betti0}
\begin{aligned}
    \mszero &\cong \hzero(\RR^{2\bullet}(\FP)).
\end{aligned}
\end{align}
Indeed, as parameter~$\delta$ increases,  no new path-connected component can appear in the Rips complex: path-connected components can only merge. Consequently,  computing free presentations for $\mszero$ can now be done from a single filtration instead of a pair of filtrations.


\section{Experiments}\label{sec:experiments}
In this section, we illustrate our approximation and convergence results through practical experiments on both synthetic and real-world datasets. The code and datasets are available at \url{https://github.com/JingyiLi-612/mph_estimation_experiments}.


\begin{figure}[t]
	\centering
	\begin{subfigure}[b]{0.44\textwidth}
		\centering
		\includegraphics[width=\textwidth]{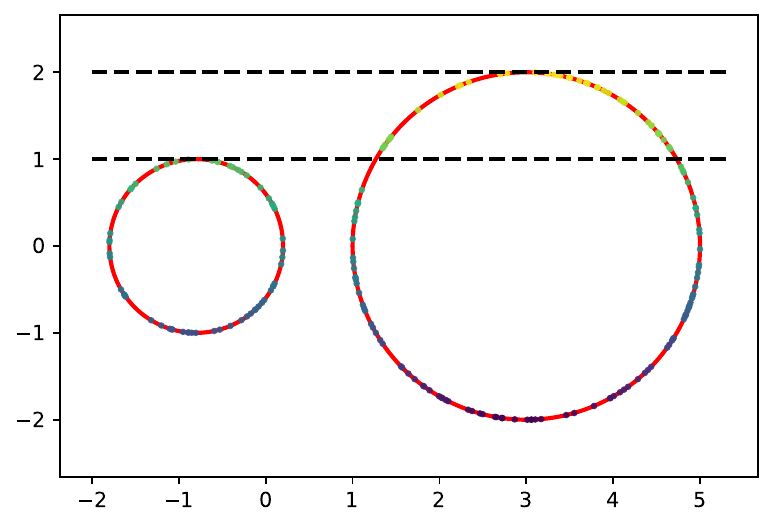}
		\caption{}
	\end{subfigure}
	\begin{subfigure}[b]{0.44\textwidth}
		\centering
		\includegraphics[width=\textwidth]{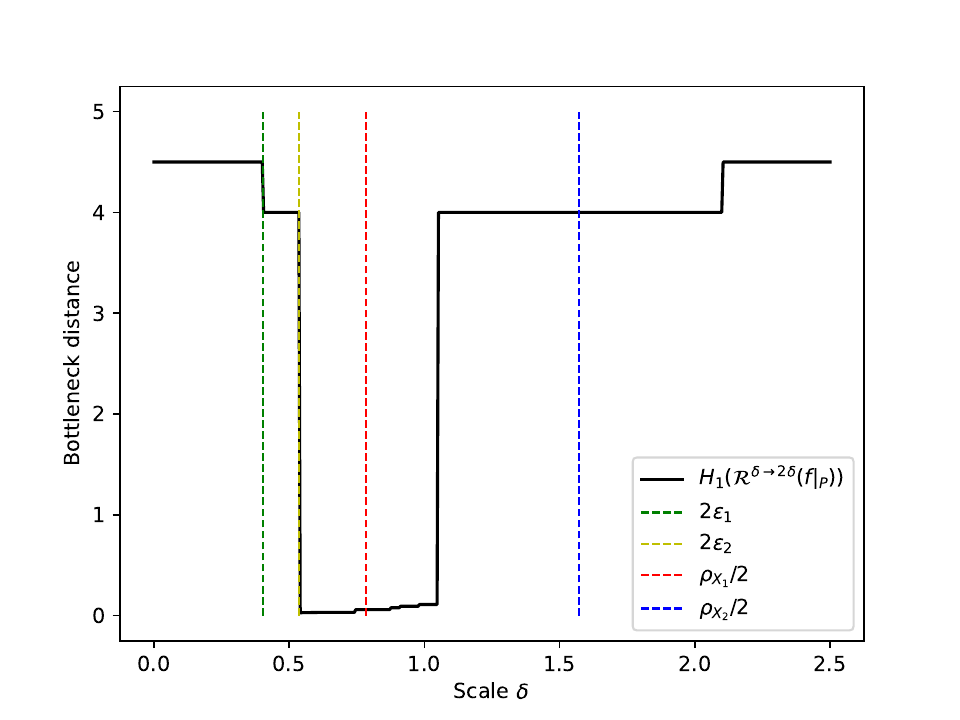}
		\caption{}
	\end{subfigure}
	\vspace{0.5cm}
	\begin{subfigure}[b]{0.44\textwidth}
		\centering
		\includegraphics[width=\textwidth]{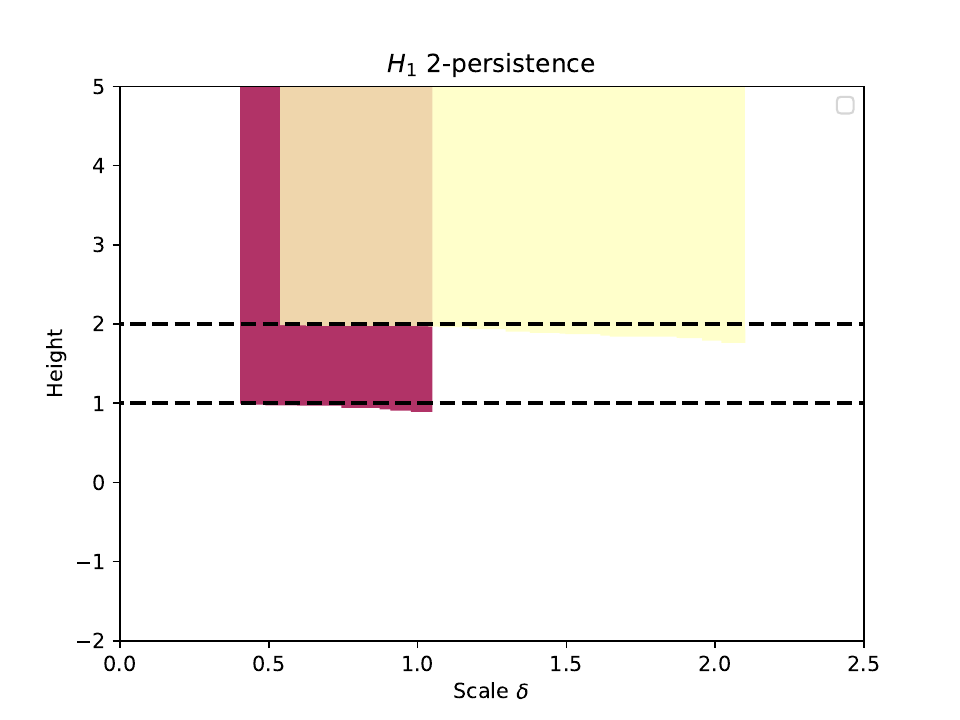}
		\caption{}
	\end{subfigure}
	\begin{subfigure}[b]{0.44\textwidth}
		\centering
		\includegraphics[width=\textwidth]{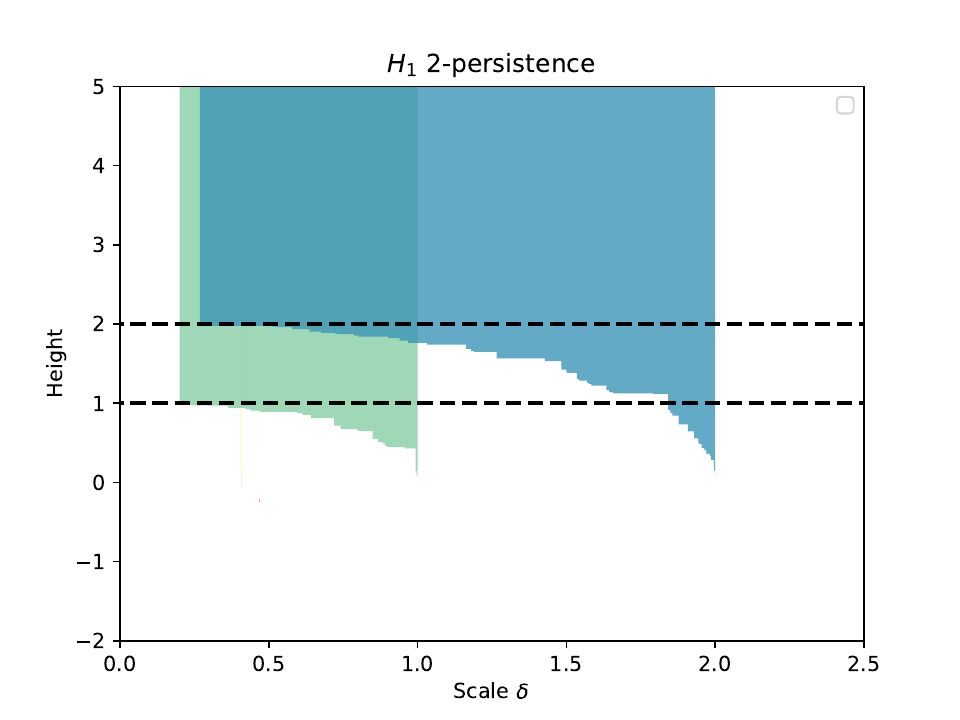}
		\caption{}
	\end{subfigure}
	\caption{
		\textbf{(a)}:~A sample $P$ from a space $X=X_1\sqcup X_2$ composed of two circles equipped with geodesic
		distances, each uniformly sampled with distinct radii and concentration levels. The
		target function $\ff\colon X \to \mathbb{R}$ is the height
		function on the two circles. The barcode of the target $H_1(\ff)$
		consists of two infinite bars, each originating from a level marked as a dashed line.
		\textbf{(b)}:~Bottleneck distance between the barcode of the estimator
		$\Homology_1 \left( \RR^{\delta\to 2\delta}(\FP) \right)$
		and that of the target $\Homology_1 \left( \ff \right)$ as a function of $\delta$.  Here, $\e_1$ (resp. $\e_2$) is
		the sampling error of $X_1$
		(resp. $X_2$), and $\varrho_{X_1}$ (resp. $\varrho_{X_2}$) the convexity radius of $X_1$ (resp. $X_2$). All infinite
		bars are truncated at~10 to ensure the bottleneck distances are finite.
		\textbf{(c)}:~Visualization of the estimator~$\Homology_1 \left( \RR^{\bullet\to 2\bullet}(\FP) \right)$  computed
		using \MMA.
		\textbf{(d)}:~Visualization of the estimator~$\Homology_1 \left( \CC^{\bullet}(\FP) \right)$ computed
		using \MMA.  Each colored region represents a persistent topological feature.  Dashed lines indicate birth
	times of bars in the barcode of~$H_1(\ff)$.	}
	\label{fig:expe:three_annulus_dataset}
\end{figure}

\subsection{Toy examples}
We validate the claims in~\cref{thm:estimator_fixed_radius,thm:estimator_varying_radius} through a simple example
consisting of
two circles with distinct radii and concentration levels, as shown in~\cref{fig:expe:three_annulus_dataset}~(a).
In~\cref{fig:expe:three_annulus_dataset}~(b) we analyze the error between the estimator $H_1(\RR^{\delta\to
2\delta}(\FP))$ and the target $H_1(\ff)$ as a function of the scale parameter~$\delta$. As guaranteed
by~\cref{thm:estimator_fixed_radius}, for $\delta$ within the range~$[2\e,\rhox/2)$ the error between the estimator and
the target
is small, controlled by $\delta$. The approximation remains good even for $\delta$ going beyond that range, which is
not explained by the theory but occurs here because the space and function are very regular in this example.

In~\cref{fig:expe:three_annulus_dataset}~(c) we show a visualization of our estimator  $H_1(\RR^{\bullet\to
2\bullet}(\FP))$ using \MMA{}~\cite{loiseaux2022fast}. As expected from~\cref{thm:estimator_varying_radius}, we observe
two topological features in our estimator, corresponding to the two circles in the dataset. The values of~$\delta$ at
which these features  appear then disappear are aligned with the endpoints of the interval of values of~$\delta$ for
which $H_1(\RR^{\delta\to 2\delta}(\FP))$ is a good approximation of~$H_1(\ff)$ in~\cref{fig:expe:three_annulus_dataset}~(b).

For comparison, in~\cref{fig:expe:three_annulus_dataset}~(d) we show a visualization of the estimator
$H_1(\CC^{\bullet}(\FP))$ using \MMA{}, for which we use the Euclidean distance instead of the geodesic distance in
order to enable the construction of the function–\v{C}ech filtration. Again, we see two topological features
corresponding to the two circles, and $H_1(\CC^{\delta}(\FP))$  is a good approximation of~$H_1(\ff)$ for $\delta$
within a fairly large range.


We now add noise to the dataset sampled from the two circles, as shown
in~\cref{fig:expe:three_annulus_noise_dataset}~(a).  As expected
from~\cref{sec:handle_noise_in_input}, the trend is the same. Note
that,
while~\cref{prop:estimator_rips_noise_distance_fixed_radius,prop:estimator_rips_noise_distance_varying_radius}
prescribe to use a factor $\alpha>2$ in
$H_1(\RR^{\bullet\to\alpha\bullet}(\FP))$ in order to accurately
approximate the target $H_1(\ff)$ in the presence of noise in the
metric, \cref{fig:expe:three_annulus_noise_dataset} shows that, on
this example, our estimator $H_1(\RR^{\bullet\to2\bullet}(\FP))$
performs well even with a factor~$\alpha$ set to~2. 

\begin{figure}[t]
	\centering
	\begin{subfigure}[b]{0.44\textwidth}
		\centering
		\includegraphics[width=\textwidth]{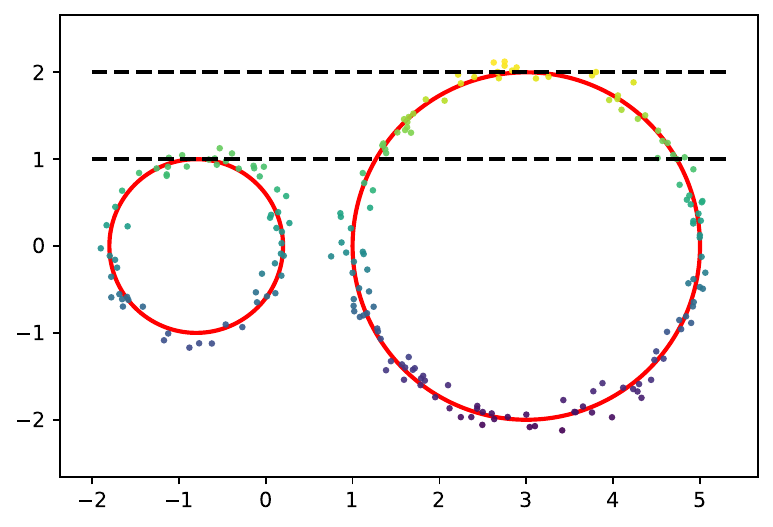}
		\caption{}
	\end{subfigure}
	\begin{subfigure}[b]{0.44\textwidth}
		\centering
		\includegraphics[width=\textwidth]{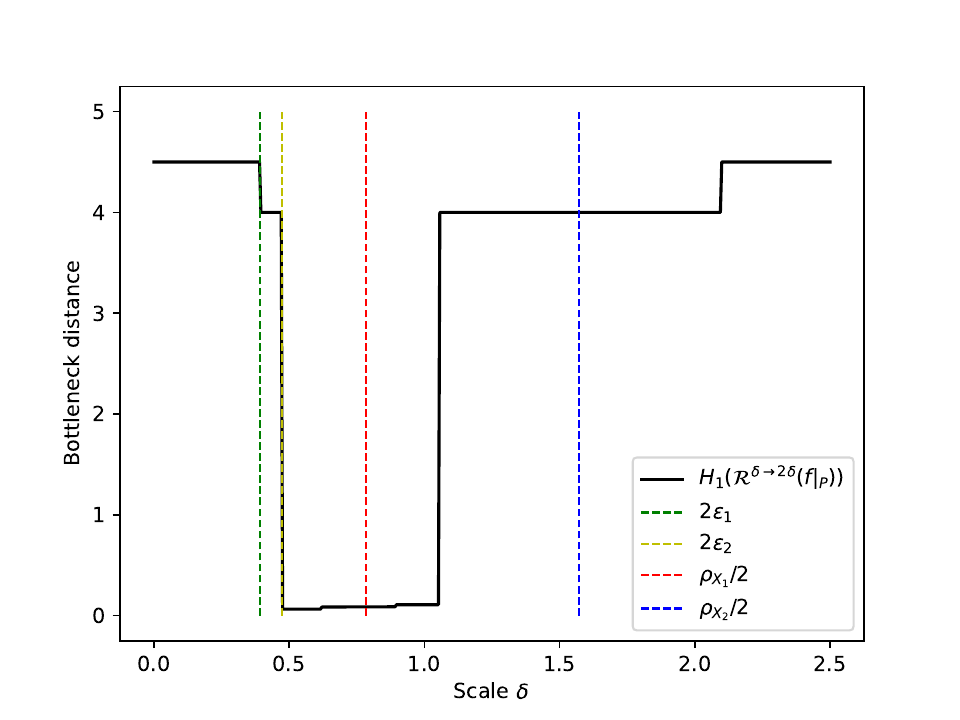}
		\caption{}
	\end{subfigure}
	\vspace{0.5cm}
	\begin{subfigure}[b]{0.44\textwidth}
		\centering
		\includegraphics[width=\textwidth]{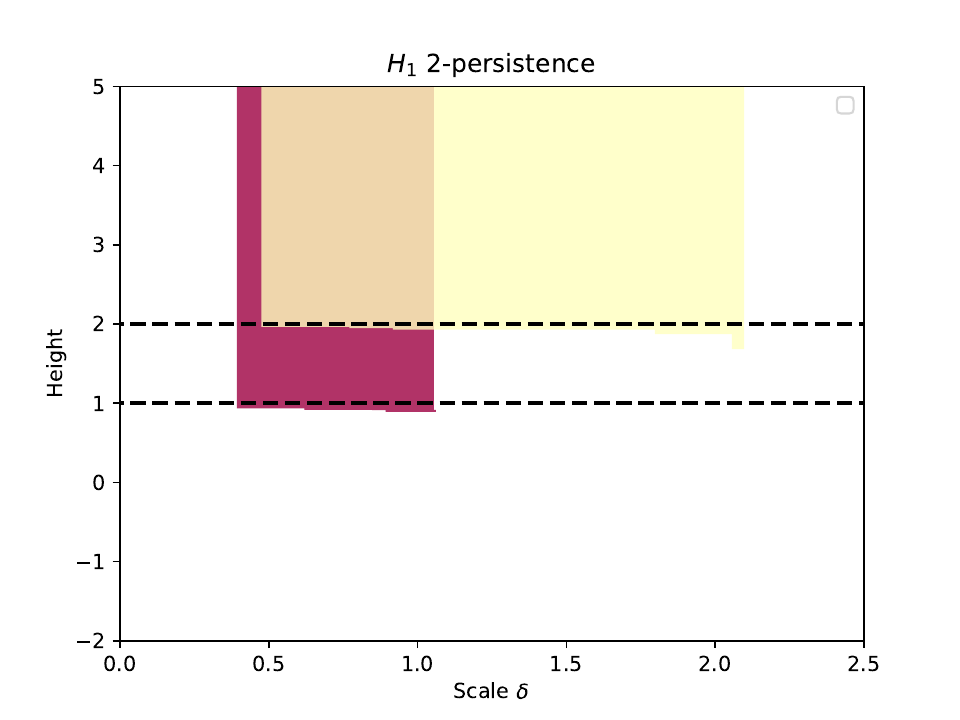}
		\caption{}
	\end{subfigure}
	\begin{subfigure}[b]{0.44\textwidth}
		\centering
		\includegraphics[width=\textwidth]{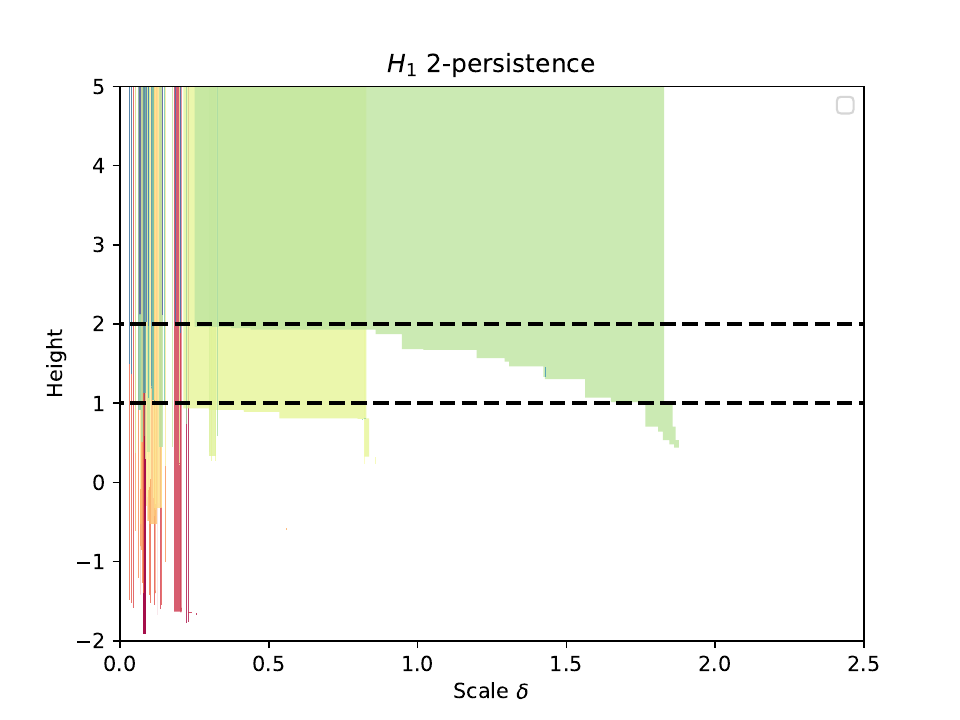}
		\caption{}
	\end{subfigure}
	\caption{A noisy analog of \cref{fig:expe:three_annulus_dataset}.}\label{fig:expe:three_annulus_noise_dataset}
\end{figure}

\newpage

\subsection{Real dataset of immune cells (involving Lipschitz functions)}
In this section, we consider the digitized immunohistochemistry
dataset from~\cite{vipondMultiparameterPersistentHomology2021}.
This dataset is comprised of manually annotated cell locations from three cell types, namely cytotoxic T
	lymphocytes, regulatory T lymphocytes, and macrophages, each characterized by the expression of one of the proteins
CD8, FoxP3, and CD68, respectively---see~\cref{fig:expe:immuno_point_cloud}.

\begin{figure}[t!]
	\centering
	\includegraphics[width=.8\textwidth]{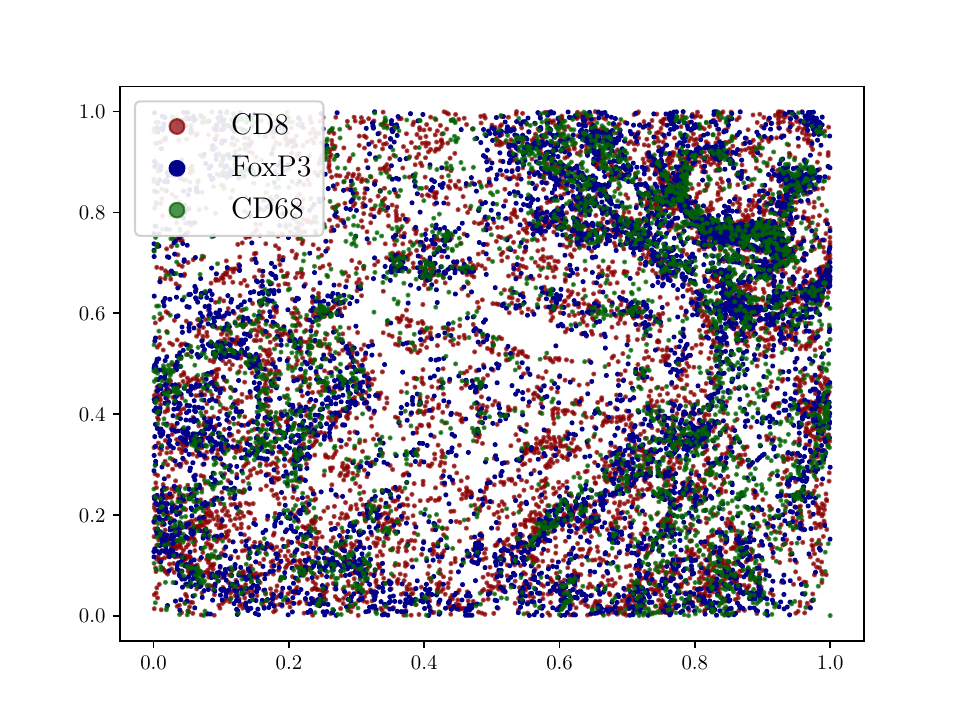}
	\caption{Point cloud dataset, with colors indicating the protein labels CD8, FoxP3, and CD68.
	}\label{fig:expe:immuno_point_cloud}
\end{figure}

\begin{figure}[t!]
	\centering
	\begin{subfigure}{0.48\textwidth}
		\centering
		\includegraphics[width=\textwidth]{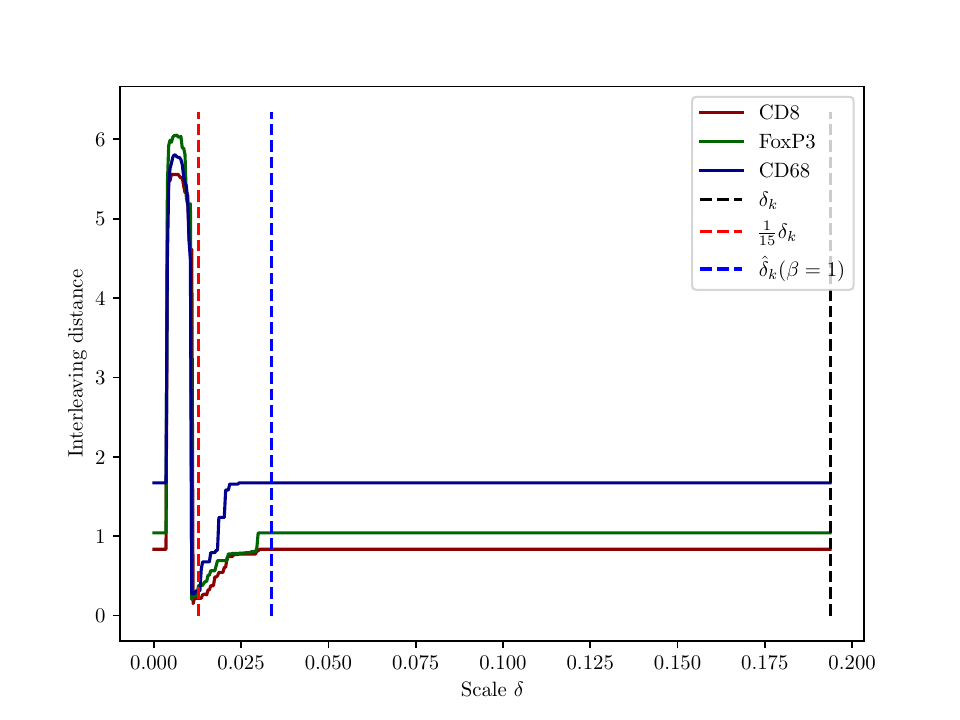}
		\caption{Errors for $H_1(\CC^{\delta}(\ff_i|_{X_{10~000}}))$.}
		\label{fig:expe:immuno_error}
	\end{subfigure}
	\hfill
	\begin{subfigure}{0.48\textwidth}
		\centering
		\includegraphics[width=\textwidth]{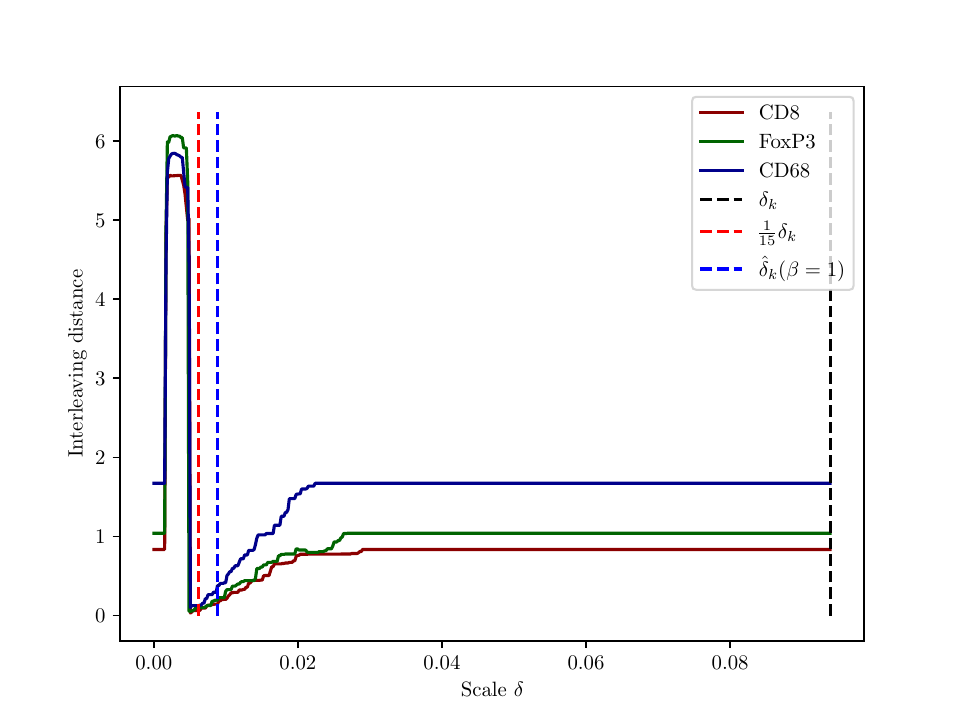}
		\caption{Errors for $H_1(\CC^{\delta}(\ff_i|_{X_{50~000}}))$.}
		\label{fig:expe:immuno_error_fixed_delta}
	\end{subfigure}
	\caption{Interleaving distance between our estimators and the target $H_1(\ff)$ w.r.t. the scale $\delta$.}
	\label{fig:immuno_error:all_delta}
\end{figure}

The index $i \in \{1,2,3\}$ refers to the protein in the ordered set
	$\{\mathrm{CD8},  \mathrm{FoxP3}, \mathrm{CD68}\}$, and
	$\ff_i \colon [0,1]^2 \to \mathbb{R}$ denotes the negative density
	estimate associated with that protein. Since the functions $\ff_i$ are unknown, we estimate the density of each point
	cloud using kernel density estimation, which yields approximations---also called $\ff_1, \ff_2, \ff_3$ for simplicity---that are Lipschitz continuous by construction. We then discretize the domain $[0,1]^2$ on a grid (using a resolution of $1~000 \times
	1~000$ for~\cref{sec:R_valued_Lipschitz} and of $500 \times 500$ for~\cref{sec:R3_valued_Lipschitz}) and compute the
	persistence module of the cubical complex induced by the estimated function on this grid, which serves as a proxy for the ground-truth target.

	In the following experiments, in order to reach smaller Hausdorff distances between
	the sample $X_k$ and the domain $X =[0,1]^2$, we first draw \(10{,}000\) points uniformly from \(X\), and then select \(X_k\) using \(k\)-means farthest point sampling applied to this set. In practice, the choice of the rate $\delta_k$ prescribed in~\cref{thm:stat_results_overview} can
	be  improved significantly when the sample $X_k$ is well-behaved.
	We propose heuristics to estimate good rates in non-asymptotic regimes.

\subsubsection{Estimating a single function $\ff_i$}\label{sec:R_valued_Lipschitz}
We empirically validate our theoretical results by examining the estimators \(H_1(\CC^{\delta}(\ff_i|_{X_k}))\) with
$i\in\{1,2,3\}$
built from the three associated point clouds.
%
We first focus on the convergence rate of the estimators as the sample size $k$
	increases, with respect to the choice of scale parameter~$\delta_k$.
	From \cref{thm:stat_results_overview}, since the sampling measure is known---namely the uniform measure on $[0,1]^2$, which is $(a,b)$-standard with
	parameters $a=\frac \pi 4$ and $b=2$---we can derive an asymptotic theoretical rate
	$\delta_k = 4 \left( \frac {8 \log(k)}{\pi k}\right)^{\frac 1 2}$.
	However, as shown in \cref{fig:immuno_error:all_delta}, this choice
	of $\delta_k$ is very conservative, and consequently the estimators
	$H_1(\CC^{\delta_k}(\ff_i|_{X_k}))$ do not perform better than the zero
	module.
	This is explained by the fact that $\delta_k$ is
	designed for worst-case sampling measures and 
	asymptotic regimes.
	As shown in~\cref{fig:immuno_error:all_delta}, the estimators
	$H_1(\CC^{\delta_k}(\ff_i|_{X_k}))$ achieve good performance only when
	$\delta_k$ is smaller than $0.02$, which would require sample sizes 
	$k$ larger than $10^6$.

        Another option is to consider the
        estimated
        rate $\hat\delta_k$ mentioned in \cref{thm:stat_results_overview} and defined in  \cref{eq:stats:hatdeltak} (\cref{sec:ab_unknown}). We pick $\beta=1$ as a default choice of parameter in the definition of $\hat\delta_k$.
	In \cref{fig:immuno_error:all_delta}, we observe that this rate
	is far less conservative than~$\delta_k$ and already exhibits its asymptotic behavior when
	$k$ is larger than $50~000$, achieving good performance.
	In this example though, the rate $\hat \delta_k$ has some limitations:
	\begin{itemize}
		\item the bad behavior of the corresponding estimators on small sample sizes ($k\le 10^4$);
		\item the dependence of $\hat \delta_k$ on a parameter $\beta$, which needs to be tuned;
		\item the fact that the knowledge  of the ambient dimension and the probability measure from which the points are sampled are not leveraged.
	\end{itemize}

	\begin{figure}[b!]
	\centering
	\begin{subfigure}{0.48\textwidth}
		\centering
		\includegraphics[width=\textwidth]{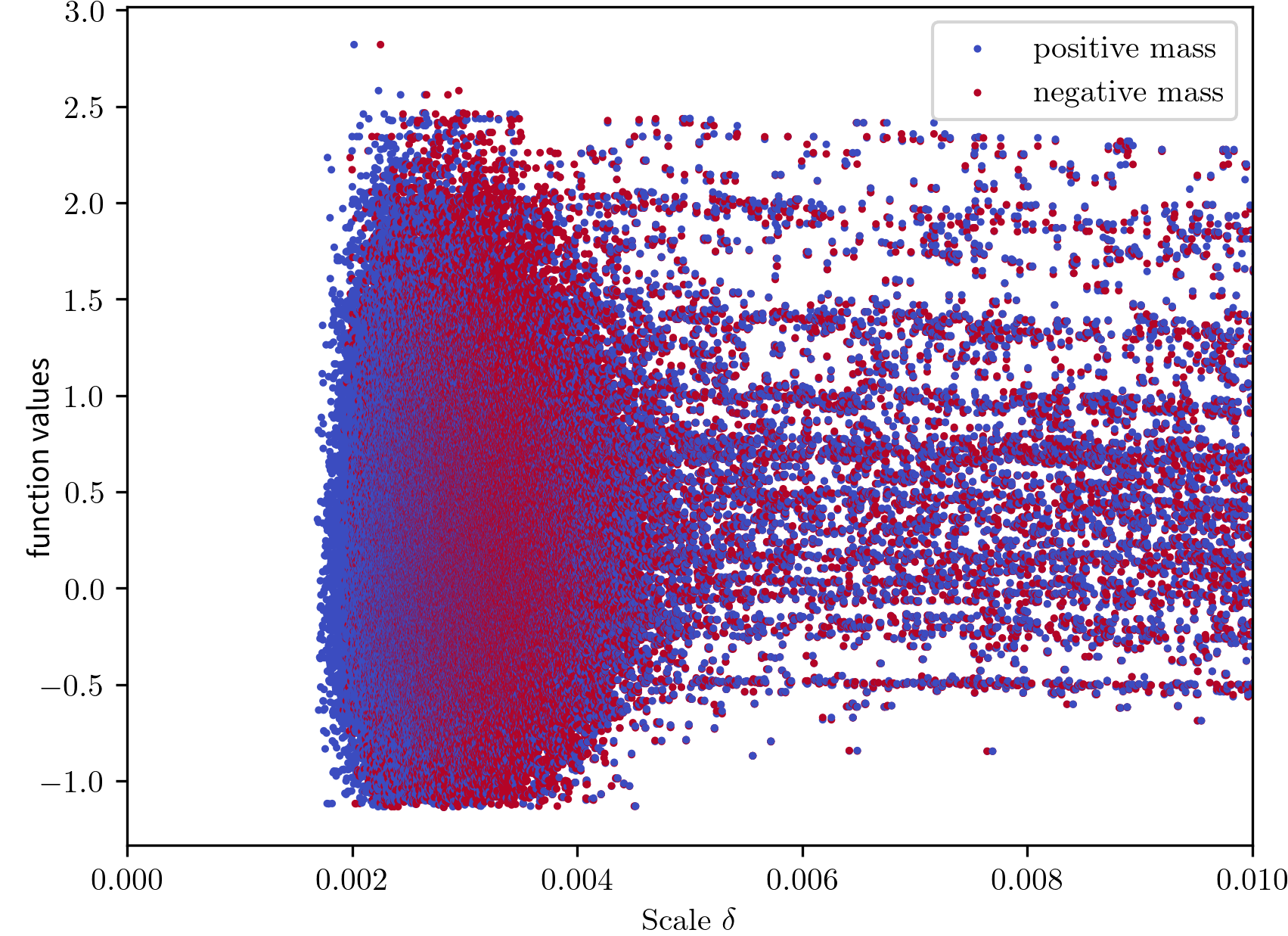}
		\caption{Hilbert function signed bars of $H_1(\CC^{\bullet}(\ff_1|_{X_{k}}))$.}
		\label{fig:expe:immuno_betti_concentration}
	\end{subfigure}
	\hfill
	\begin{subfigure}{0.48\textwidth}
		\centering
		\includegraphics[width=\textwidth]{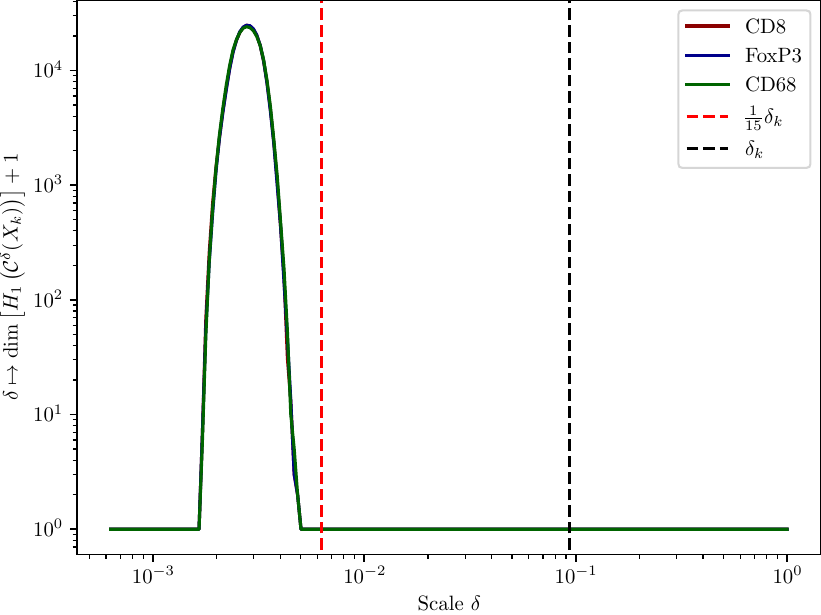}
		\caption{Pointwise dimension of $H_1(\CC^{\delta}(X_{k}))$ w.r.t $\delta$.}
		\label{fig:expe:immuno_pointwise_dim}
	\end{subfigure}
	\caption{Heuristic to determine a good multiplicative constant for the rate $\delta_k$, where $k=50~000$.}
	\end{figure}

	\begin{figure}[t]
	\centering
	\begin{subfigure}{0.48\textwidth}
		\centering
		\includegraphics[width=\textwidth]{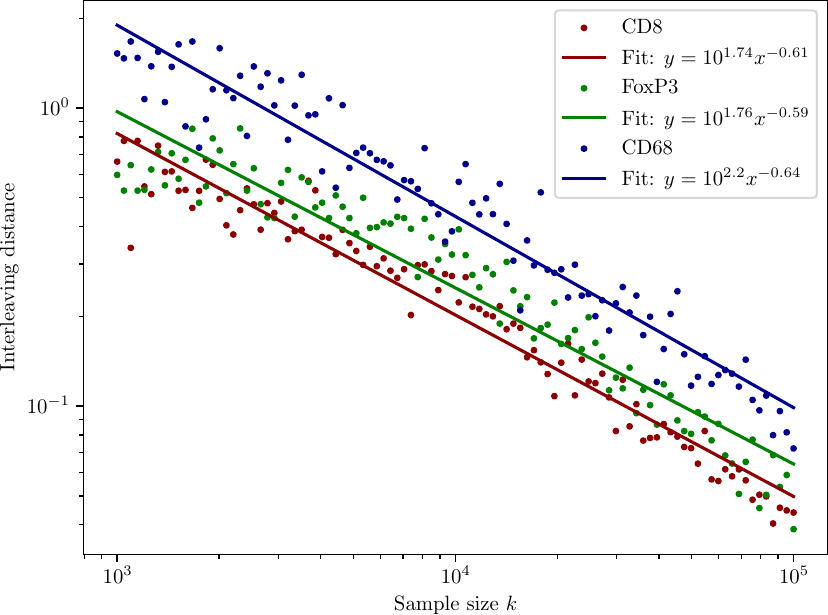}
		\caption{Convergence rate with $\delta'_k = \frac 1 {15} \delta_k$.}
		\label{fig:expe:immuno_delaunay_errors_n100000_delta_FT}
	\end{subfigure}
	\hfill
	\begin{subfigure}{0.48\textwidth}
		\centering
		\includegraphics[width=\textwidth]{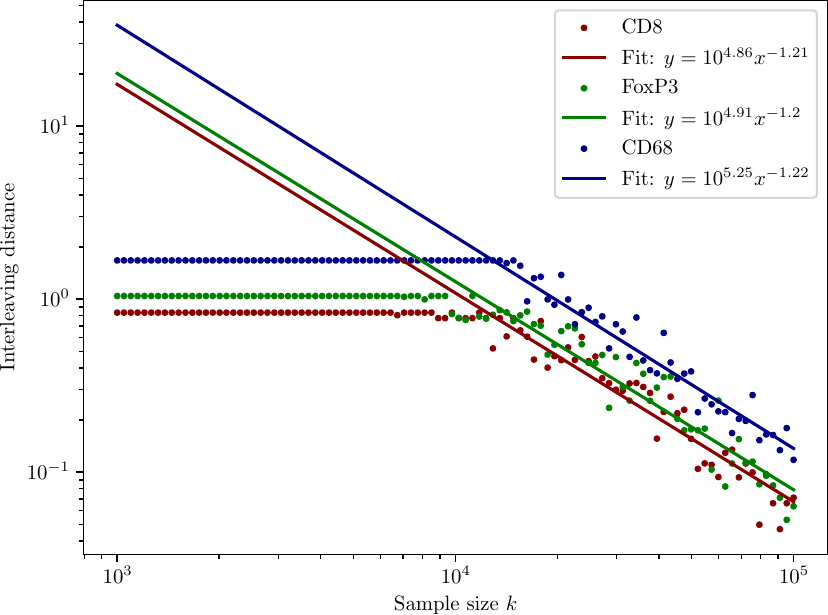}
		\caption{Convergence rate with $\hat{\delta}_k$.}
		\label{fig:expe:immuno_delaunay_errors_n100000_delta_hat}
	\end{subfigure}
        \caption{Interleaving distance between the estimators and the target w.r.t. the sample size~$k$.}
        \label{fig:immuno_error:all_k}
\end{figure}

\begin{figure}[t]
		\centering
		\includegraphics[width=.6\textwidth]{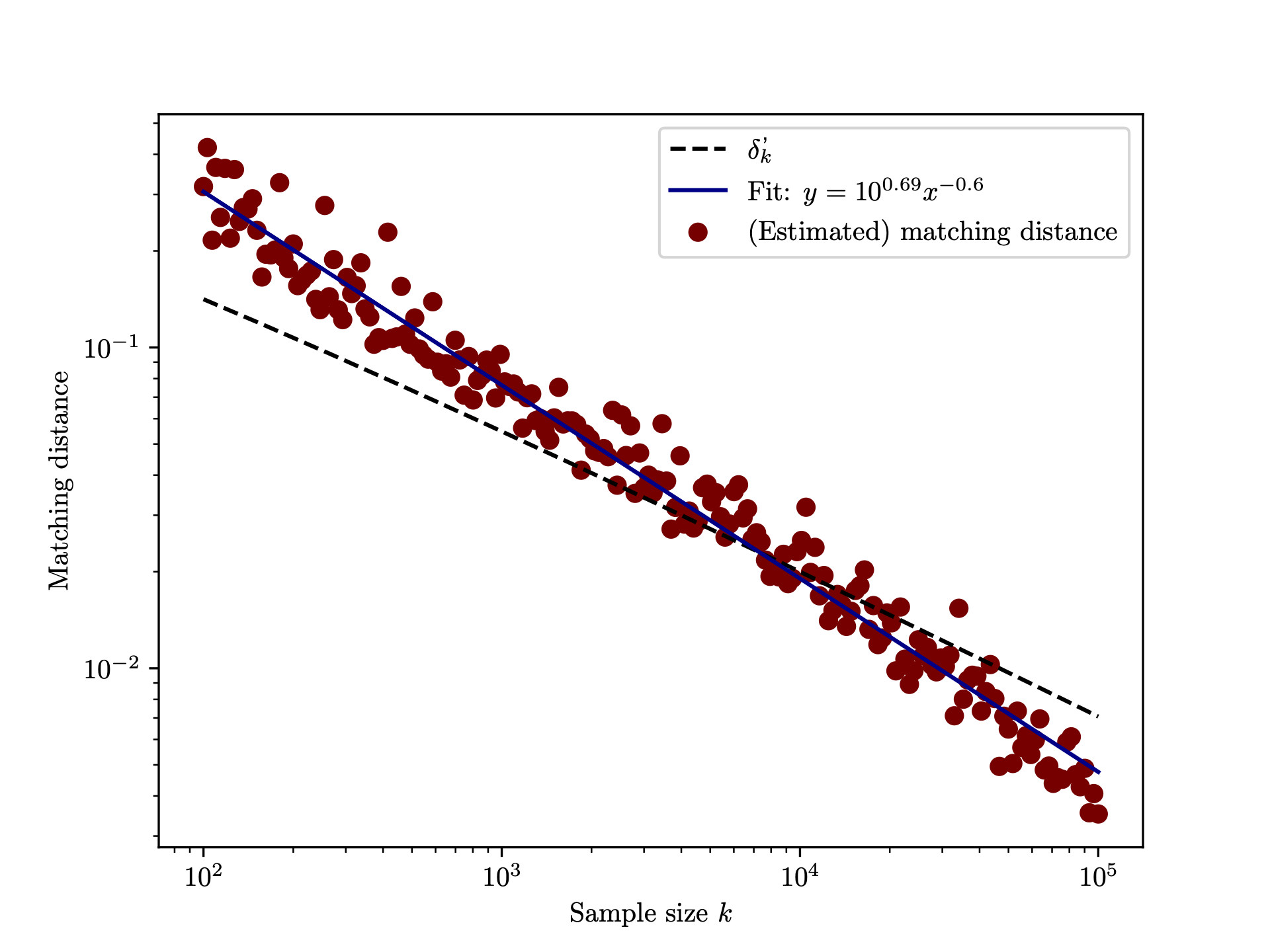}
		\caption{
			Estimated matching distance between the estimator $H_0(\RR^{\delta'_k\to 2\delta'_k}(\ff|_{X_k}))$
			and the ground truth $H_0(\ff)$
			as a function of the sample size $k$ under random sampling.
		}\label{fig:expe_3param}
\end{figure}

	Here is a heuristic we apply to select a relevant rate~$\delta'_k$. Since the dimension of the domain is known, we set $\delta'_k$ to  be a fixed
	fraction of the theoretical rate~$\delta_k$.
	This is motivated by \cref{eq:packing_covering_rates} in \cref{appendix:stats},
	which ensures that, up to a multiplicative constant,
	the rate $(1/k)^{1/d}$ cannot be improved for
	any (possibly deterministic) $k$-sample of a $d$-dimensional space.
	To determine a good multiplicative constant,
	we notice from \cref{fig:immuno_error:all_delta} that, when $\delta\in[0, \dH(X_k, X))$,
	the error of the estimators $H_1(\CC^{\delta}(\ff_i|_{X_k}))$ soars, due to
	the fact that $\CC^\delta(X_k)$ and $X$ do not have the same topology.
	According to \cref{thm:estimator_fixed_radius}, the optimal scale $\delta$ is
	then given as the smallest value lying above that interval, that is: $\dH(X_k, X)$. Our rate $\delta'_k$ is an estimate of this value.
	In order to compute it, we look at the topological changes of $\CC^\delta(X_k)$
	as $\delta$ increases. Since the measure is uniform and thus does not exhibit pathological local behaviors, the
	artifacts in the topological type of $\CC^\delta(X_k)$ should all vanish roughly at the same scale, and slightly before~$\dH(X_k, X)$.
	This phenomenon, suggested by the shape of the curves in~\cref{fig:immuno_error:all_delta}, is further
	illustrated in~\cref{fig:expe:immuno_betti_concentration}, where
	 we consider the {\em Hilbert function signed barcode}~\cite{loiseauxStableVectorizationMultiparameter2023,oudot2024stability} of $H_1(\CC^{\bullet}(\ff_1|_{X_k}))$:
        after an initial transition phase ($\delta<0.005$) during which a concentration of positive generators appears, followed then by a concentration of negative generators, the structure of the module stabilizes as these positive and negative generators cancel each other out. This is highlighted when summing the signed generators within the half-plane $(-\infty, \delta]\times\RRR$, or equivalently, when computing the pointwise dimension of $H_1(\CC^{\delta}(X_{50~000}))$, for $\delta$ ranging from $0$ to $+\infty$, as shown in~\cref{fig:expe:immuno_pointwise_dim}. Since the convexity radius $\rhox$ is infinite in this case, we have  $H_1(\CC^{\delta}(X_k)) \cong H_1(\OO^\delta(X_k)) \cong H_1(X)$ for all $\delta>\dH(X_k, X)$, and so
	our heuristic to determine a suitable rate ${(\delta_k')}_{k\in \mathbb{N}}$ from
	a sample $X_{k_0}$ for some sufficiently large $k_0\in \mathbb{N}$ is: 
	\begin{enumerate}
		\item Compute the pointwise dimension of $H_1(\CC^{\delta}(X_{k_0}))$ as in
			\cref{fig:expe:immuno_pointwise_dim} for a range of $\delta$;
		\item Identify the transition phase of $\dim H_1(\CC^{\delta}(X_{k_0}))$;
		\item Choose a scale $\delta^*$ past this phase, so that
                  $\delta \in
			[\delta^*, \infty)\mapsto\dim H_1(\CC^{\delta}(X_{k_0}))$ is constant; 
		\item Define the rate ${(\delta'_k)}_k$ for $k\in  \mathbb{N}$ as
			$\delta'_k := \frac {\delta^*}{\delta_{k_0}} \delta_k$. 
	\end{enumerate}

	In~\cref{fig:immuno_error:all_k},
	we observe that both rates $\delta'_k$ and $\hat{\delta}_k$ achieve good
	performance asymptotically, with a convergence rate that is at least of the order of 
	$\left(\frac{\log(k)}{k}\right)^{1/2}$.
        However, the asymptotic regime is reached earlier with $\delta'_k$ than with $\hat{\delta}_k$---while, as we said, it is not reached with~$\delta_k$.


%

	\subsubsection{Estimating the combined functions $(\ff_1, \ff_2, \ff_3)$}\label{sec:R3_valued_Lipschitz}
	We now study the function $\ff = (\ff_1, \ff_2, \ff_3)\colon [0,1]^2\to \mathbb{R}^3$ and
	approximate the corresponding 3-parameter persistence module $\hzero(\ff)$.
	For this we employ the estimator $\hzero(\RR^{\delta_k'\to2\delta_k'}(\ff|_{X_k})) \cong
	\hzero(\RR^{2\delta_k'}(\ff|_{X_k}))$, where, for each sample size~$k$, the scale parameter is
	given by the same $\delta_k'$ as in \cref{sec:R_valued_Lipschitz}.
	Since computing the interleaving distance in this context  is
	NP-hard~\cite{bjerkevik2020computing}, we
	use the matching distance~\cite{landi2018rank} as a proxy, estimated via Monte Carlo with $1,000$ random lines in~$\RRR^3$.
The results are
shown in~\cref{fig:expe_3param}.
Notably, the observed convergence rate matches
that of the previous experiment in \cref{fig:expe:immuno_delaunay_errors_n100000_delta_FT},
which suggests that the empirical
	convergence rate is independent of the number of parameters.

\begin{figure}[t!]
	\centering
	\includegraphics[width=.7\textwidth]{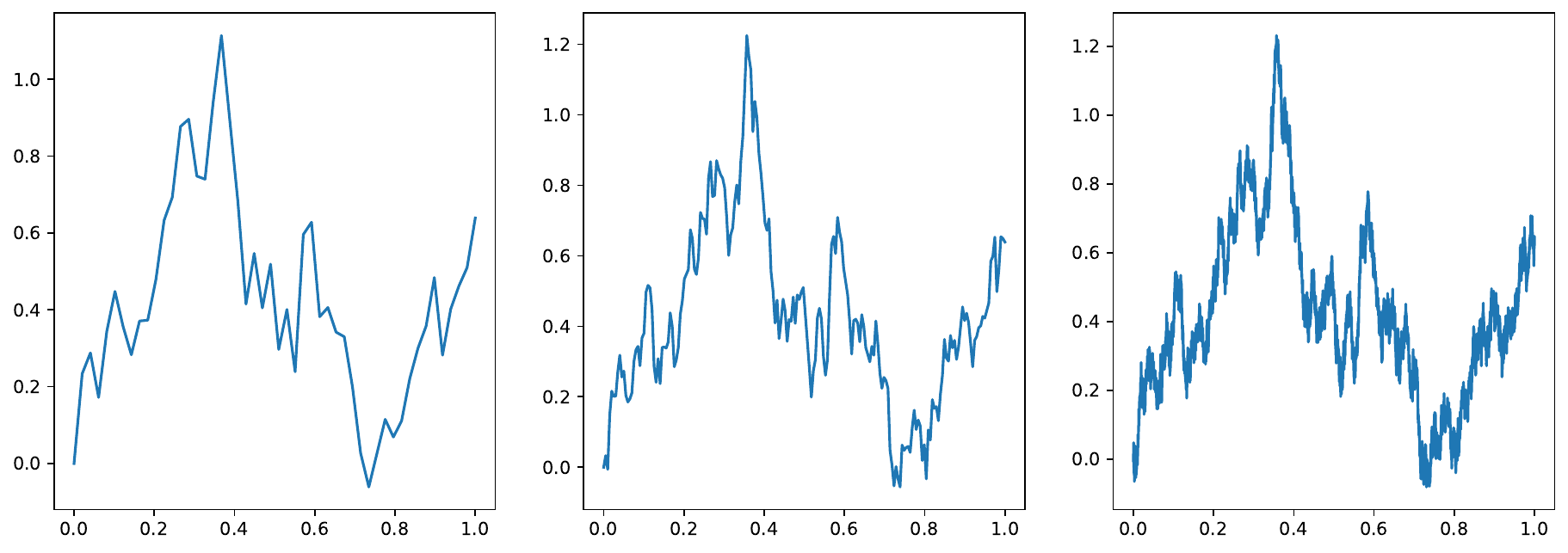}
	\caption{Samplings of a Brownian motion on $[0,1]$ with, respectively, $50$,  $200$, and $10^6$
	points.}
	\label{fig:brownian_motion_dataset}
\end{figure}

\begin{figure}[t!]
	\centering
	\includegraphics[width=.5\textwidth]{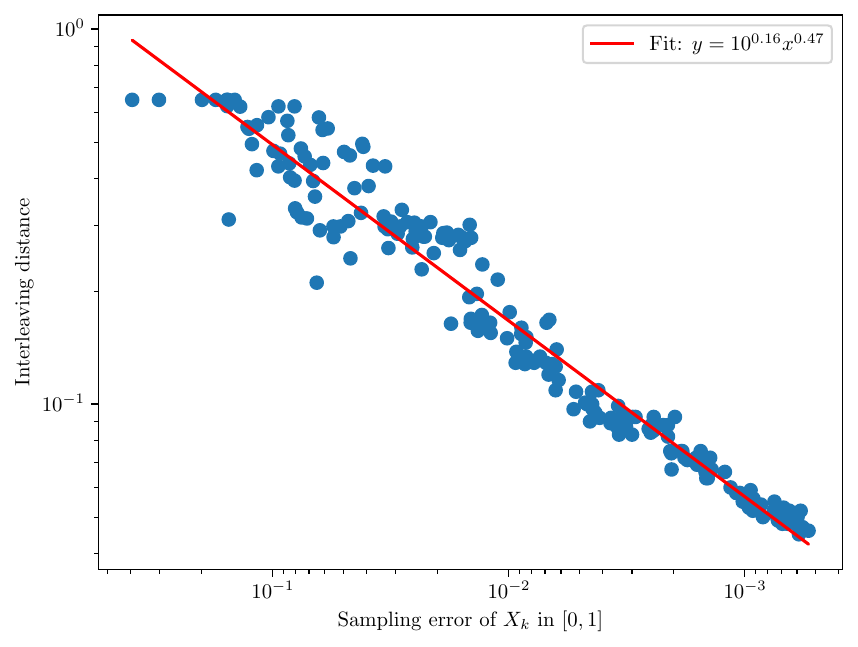}
	\caption{Log–log plot of the convergence of $\hzero(\CC^{\e_k}( \ff|_ {X_k}))$ toward $\hzero(\ff)$ for a
			Brownian motion $\ff$.
			The x-axis represents the sampling
			error $\e_k = \dH(X_k, [0,1])$, and the y-axis represents the interleaving distance between $\hzero(\CC^{\e_k}( \ff|_
			{X_k}))$ and $\hzero(\ff)$.
        }
	\label{fig:brownian_motion_cv_rate2}
\end{figure}

\subsection{Brownian motion in 1-d (involving a H\"older function)}

The Brownian motion is not Lipschitz continuous but almost surely almost everywhere
	$\omega$-continuous for any modulus of continuity $\omega$ of the
	form $\omega\colon x \mapsto c\, x^{\frac 1 2 - \varepsilon}$, with
	$\varepsilon>0$ and $c>0$ large enough~\cite[Corollaries~1.20 and~5.3]{mortersBrownianMotion2010}. Let $\ff:[0,1]\to \RRR$ be a realization of a standard Brownian motion, which can be approximated by a Rademacher random walk.

	For each $k\in\NNN$, we build a $k$-sample~$X_k$ in $[0,1]$ according to the uniform distribution, as shown in~\cref{fig:brownian_motion_dataset}
	for several values of $k$. Then, since our goal here is to illustrate \cref{thm:estimator_fixed_radius}, we assume we have access to the sampling error $\e_k:=\dH(X_k, [0,1])$ and so we
          use the estimator
	$\hzero(\CC^{\e_k}( \ff|_ {X_k}))$ to approximate the target $\hzero(\ff)$ for various sample sizes~$k$.
	The corresponding results are shown in~\cref{fig:brownian_motion_cv_rate2}.
	A regression analysis of the experimental data
	indicates that
	\(
		\dIo\left(\hzero(\CC^{\e_k}( \ff|_ {X_k})),\hzero(\ff)\right)\approx
	10^{0.16}\,{\e_k}^{0.47},
	\)
	which aligns closely with the theoretical prediction
	from~\cref{thm:estimator_fixed_radius}, stating that the interleaving distance between $\hzero(\CC^{\e_k}( \ff|_
	{X_k}))$ and $H_0(\ff)$ should be at most  $\omega(\e_k) \lesssim {\e_k}^{1/2}$.


\subsection{Timings}
In~\cref{tab:timings},
we report the running times for some of our experiments.
Only the first one involves our algorithm for computing a free presentation of $\ms$. For all computations
%
 we used an
  Intel\textsuperscript{\textregistered} Core\textsuperscript{\texttrademark} i9-12900K CPU (5.2\,GHz max)~equipped~with~125\,GB~of~RAM.

\begin{table}[H]
    \begin{tabular}{|p{2.1cm}|r|r|r|r|r|p{2.7cm}|}
    \hline
    Experiment & Estimator & \#Params & \#Points & Filt.\ size & Filt.\ time & Invariant time \\
    \hline\hline
    2 circles   &  $\msone$   & 2 & 220   & 13{,}110 & 11.8\,ms & 6.5\,ms\newline(presentation) \\
    \hline
    Immune cells \newline (single $\ff_i$) &  $H_1(\CC^{\bullet}(\ff_i|_{X_k}))$  & 2 & 6172  & 24{,}370  & 864\,ms & 50.3\,ms\newline(Hilbert\;measure) \\
    \hline
    Brownian \newline motion & $H_0(\CC^{\e_k}(\ff|_{X_k}))$ & 1 & 10{,}000 & 37{,}359 & 1.21\,s & 5.4\,ms\newline(barcode) \\
    \hline
  \end{tabular}
  \caption{Running times for some experiments. \emph{\#Params} and \emph{\#Points} denote the numbers of parameters and data points, respectively.
\emph{Filt.\ size} is the size of the filtered simplicial complex;
\emph{Filt.\ time} is the time required to construct the filtration;
and \emph{Invariant time} is the time required to compute the invariant from the filtration.}
	\label{tab:timings}
\end{table}



\paragraph*{\textbf{Acknowledgements.}} D.\,L. was supported by Inria Action Exploratoire PREMEDIT (Precision Medicine using Topology).

\bibliography{ref}

@article{bjerkevik2020computing,
  title={Computing the interleaving distance is NP-hard},
  author={Bjerkevik, H{\aa}vard Bakke and Botnan, Magnus Bakke and Kerber, Michael},
  journal={Foundations of Computational Mathematics},
  volume={20},
  number={5},
  pages={1237--1271},
  year={2020},
  publisher={Springer}
}

@incollection{landi2018rank,
  title={The rank invariant stability via interleavings},
  author={Landi, Claudia},
  booktitle={Research in computational topology},
  pages={1--10},
  year={2018},
  publisher={Springer}
}

@article{oudot2010geodesic,
  title={Geodesic delaunay triangulations in bounded planar domains},
  author={Oudot, Steve Y and Guibas, Leonidas J and Gao, Jie and Wang, Yue},
  journal={ACM Transactions on Algorithms (TALG)},
  volume={6},
  number={4},
  pages={1--47},
  year={2010},
  publisher={ACM New York, NY, USA}
}

@article{erocal2016refined,
  title={Refined algorithms to compute syzygies},
  author={Er{\"o}cal, Bur{\c{c}}in and Motsak, Oleksandr and Schreyer, Frank-Olaf and Steenpa{\ss}, Andreas},
  journal={Journal of Symbolic Computation},
  volume={74},
  pages={308--327},
  year={2016},
  publisher={Elsevier}
}

@article{la1998strategies,
  title={Strategies for computing minimal free resolutions},
  author={La Scala, Roberto and Stillman, Michael},
  journal={Journal of Symbolic Computation},
  volume={26},
  number={4},
  pages={409--431},
  year={1998},
  publisher={Elsevier}
}

@book{cox2005using,
  title={Using algebraic geometry},
  author={Cox, David A and Little, John and O'shea, Donal},
  volume={185},
  year={2005},
  publisher={Springer Science \& Business Media}
}

@phdthesis{schreyer1980berechnung,
  title={Die berechnung von syzygien mit dem verallgemeinerten weierstra{\ss}schen divisionssatz und eine anwendung auf analytische cohen-macaulay stellenalgebren minimaler multiplizit{\"a}t},
  author={Schreyer, Frank-Olaf},
  year={1980}
}

@article{l-vrcms-01, 
author = "Janko Latschev",
title = "Vietoris-{R}ips complexes of metric spaces near a closed {R}iemannian manifold", 
journal = "Archiv der Mathematik", 
volume = 77, 
number = "6", 
pages = "522--528", 
year = 2001
}

@book{hatcher
, author =  "Allen Hatcher"
, title =   "Algebraic Topology"
, publisher =   "Cambridge Univ. Press"
, year =    2001
, url = "http://www.math.cornell.edu/~hatcher/"
, update =  "01.11 orourke"
}

@article{botnan2020decomposition,
  title={Decomposition of persistence modules},
  author={Botnan, Magnus and Crawley-Boevey, William},
  journal={Proceedings of the American Mathematical Society},
  volume={148},
  number={11},
  pages={4581--4596},
  year={2020}
}

@article{bauer2015induced,
  title={Induced matchings and the algebraic stability of persistence barcodes},
  author={Bauer, Ulrich and Lesnick, Michael},
  journal={Journal of Computational Geometry},
  volume={6},
  year={2015}
}

@article{bobrowski2017topological,
  title={Topological consistency via kernel estimation},
  author={Bobrowski, Omer and Mukherjee, Sayah and Taylor, Jonathan E.},
  journal={Bernoulli},
  volume={23},
  number={1},
  pages={288--328},
  year={2017}
}

@inproceedings{chazal2009proximity,
  title={Proximity of persistence modules and their diagrams},
  author={Chazal, Fr{\'e}d{\'e}ric and Cohen-Steiner, David and Glisse, Marc and Guibas, Leonidas J and Oudot, Steve Y},
  booktitle={Proceedings of the twenty-fifth annual symposium on Computational geometry},
  pages={237--246},
  year={2009}
}

@article{bakke2021stability,
  title={On the stability of interval decomposable persistence modules},
  author={Bakke Bjerkevik, H{\aa}vard},
  journal={Discrete \& Computational Geometry},
  volume={66},
  number={1},
  pages={92--121},
  year={2021},
  publisher={Springer}
}

@article{vipondMultiparameterPersistentHomology2021,
  title = {Multiparameter Persistent Homology Landscapes Identify Immune Cell Spatial Patterns in Tumors},
  author = {Vipond, Oliver and Bull, Joshua A. and Macklin, Philip S. and Tillmann, Ulrike and Pugh, Christopher W. and Byrne, Helen M. and Harrington, Heather A.},
  year = {2021},
  month = oct,
  journal = {Proceedings of the National Academy of Sciences},
  volume = {118},
  number = {41},
  pages = {e2102166118},
  issn = {0027-8424, 1091-6490},
  doi = {10/gpd543},
  langid = {english}
}

@article{blumberg2024stability,
  title={Stability of 2-parameter persistent homology},
  author={Blumberg, Andrew J and Lesnick, Michael},
  journal={Foundations of Computational Mathematics},
  volume={24},
  number={2},
  pages={385--427},
  year={2024},
  publisher={Springer},
  note={First made public as arXiv preprint 2010.09628 in 2020}
}

@article{loiseauxStableVectorizationMultiparameter2023,
  title = {Stable {{Vectorization}} of {{Multiparameter Persistent Homology}} Using {{Signed Barcodes}} as {{Measures}}},
  author = {Loiseaux, David and Scoccola, Luis and Carri{\`e}re, Mathieu and Botnan, Magnus Bakke and Oudot, Steve},
  year = {2023},
  month = dec,
  journal = {Advances in Neural Information Processing Systems},
  volume = {36},
  pages = {68316--68342},
  langid = {english}
}

@article{cuevas2004boundary,
  title={On boundary estimation},
  author={Cuevas, Antonio and Rodr{\'\i}guez-Casal, Alberto},
  journal={Advances in Applied Probability},
  volume={36},
  number={2},
  pages={340--354},
  year={2004},
  publisher={Cambridge University Press}
}

@article{lesnick2024sparse,
  title={Sparse Approximation of the Subdivision-{R}ips Bifiltration for Doubling Metrics},
  author={Lesnick, Michael and McCabe, Kenneth},
  journal={arXiv preprint arXiv:2408.16716},
  year={2024}
}

@article{rolle2024stable,
  title={Stable and Consistent Density-Based Clustering via Multiparameter Persistence},
  author={Rolle, Alexander and Scoccola, Luis},
  journal={Journal of Machine Learning Research},
  volume={25},
  number={258},
  pages={1--74},
  year={2024}
}

@article{scoccola2023persistable,
  title={Persistable: persistent and stable clustering},
  author={Scoccola, Luis and Rolle, Alexander},
  journal={Journal of Open Source Software},
  volume={8},
  number={83},
  year={2023}
}

@article{hellmer2024density,
  title={Density Sensitive Bifiltered Dowker Complexes via Total Weight},
  author={Hellmer, Niklas and Spali{\'n}ski, Jan},
  journal={arXiv preprint arXiv:2405.15592},
  year={2024}
}

@article{lesnick2024nerve,
  title={Nerve Models of Subdivision Bifiltrations},
  author={Lesnick, Michael and McCabe, Kenneth},
  journal={arXiv preprint arXiv:2406.07679},
  year={2024}
}

@article{buchet2024sparse,
  title={Sparse higher order {\uppercase{\v{C}}}ech filtrations},
  author={Buchet, Micka{\"e}l and B Dornelas, Bianca and Kerber, Michael},
  journal={Journal of the ACM},
  volume={71},
  number={4},
  pages={1--23},
  year={2024},
  publisher={ACM New York, NY}
}

@inproceedings{alonso2024sparse,
  title={A Sparse Multicover Bifiltration of Linear Size},
  author={Alonso, {\'A}ngel Javier},
  booktitle={41st International Symposium on Computational Geometry (SoCG 2025)},
  pages={6--1},
  year={2025},
  organization={Schloss Dagstuhl--Leibniz-Zentrum f{\"u}r Informatik}
}

@inproceedings{sheehy2012multicover,
  title={A Multicover Nerve for Geometric Inference},
  author={Sheehy, Donald R},
  booktitle={CCCG},
  pages={309--314},
  year={2012}
}

@article{edelsbrunner2021multi,
  title={The multi-cover persistence of {E}uclidean balls},
  author={Edelsbrunner, Herbert and Osang, Georg},
  journal={Discrete \& Computational Geometry},
  volume={65},
  pages={1296--1313},
  year={2021},
  publisher={Springer}
}

@inproceedings{carlsson2007theory,
  title={The theory of multidimensional persistence},
  author={Carlsson, Gunnar and Zomorodian, Afra},
  booktitle={Proceedings of the twenty-third annual symposium on Computational geometry},
  pages={184--193},
  year={2007}
}

@book{chazal2016structure,
  title={The structure and stability of persistence modules},
  author={Chazal, Fr{\'e}d{\'e}ric and De Silva, Vin and Glisse, Marc and Oudot, Steve},
  volume={10},
  year={2016},
  publisher={Springer}
}

@article{chazal2013persistence,
  title={Persistence-based clustering in {R}iemannian manifolds},
  author={Chazal, Fr{\'e}d{\'e}ric and Guibas, Leonidas J and Oudot, Steve Y and Skraba, Primoz},
  journal={Journal of the ACM (JACM)},
  volume={60},
  number={6},
  pages={1--38},
  year={2013},
  publisher={ACM New York, NY, USA}
}

@article{dey2025decomposing,
  title={Decomposing multiparameter persistence modules},
  author={Dey, Tamal K and Jendrysiak, Jan and Kerber, Michael},
  journal={arXiv preprint arXiv:2504.08119},
  year={2025}
}

@article{lesnick-wright,
  title={Interactive visualization of 2-D persistence modules},
  author={Lesnick, Michael and Wright, Matthew},
  journal={arXiv preprint arXiv:1512.00180},
  year={2015}
}

@article{chazal2011scalar,
  title={Scalar field analysis over point cloud data},
  author={Chazal, Fr{\'e}d{\'e}ric and Guibas, Leonidas J and Oudot, Steve Y and Skraba, Primoz},
  journal={Discrete \& Computational Geometry},
  volume={46},
  number={4},
  pages={743--775},
  year={2011},
  publisher={Springer}
}

@inproceedings{chazal2014convergence,
  title={Convergence rates for persistence diagram estimation in topological data analysis},
  author={Chazal, Fr{\'e}d{\'e}ric and Glisse, Marc and Labru{\`e}re, Catherine and Michel, Bertrand},
  booktitle={International Conference on Machine Learning},
  pages={163--171},
  year={2014},
  organization={PMLR}
}

@article{carriere2018statistical,
  title={Statistical analysis and parameter selection for mapper},
  author={Carriere, Mathieu and Michel, Bertrand and Oudot, Steve},
  journal={Journal of Machine Learning Research},
  volume={19},
  number={12},
  pages={1--39},
  year={2018}
}

@inproceedings{co-tpbr-08,
 author =      "F. Chazal and S. Y. Oudot",
 title =       "Towards Persistence-Based Reconstruction in {Euclidean} Spaces",
 booktitle = "Proc. 24th ACM Sympos. Comput. Geom.",
 year =    2008,
 pages = "232--241"
}

@inproceedings{kerber2021fast,
  title={Fast Minimal Presentations of Bi-graded Persistence Modules},
  author={Kerber, Michael and Rolle, Alexander},
  booktitle={2021 Proceedings of the Workshop on Algorithm Engineering and Experiments (ALENEX)},
  pages={207--220},
  year={2021},
  organization={SIAM}
}

@book{burago2001course,
  title={A course in metric geometry},
  author={Burago, Dmitri and Burago, Yuri and Ivanov, Sergei and others},
  volume={33},
  year={2001},
  publisher={American Mathematical Society Providence}
}

@article{loiseaux2022fast,
  title = {Multi-Parameter {{Module Approximation}}: An Efficient and Interpretable Invariant for Multi-Parameter Persistence Modules with Guarantees},
  shorttitle = {Multi-Parameter {{Module Approximation}}},
  author = {Loiseaux, David and Carri{\`e}re, Mathieu and Blumberg, Andrew J.},
  year = 2025,
  month = dec,
  journal = {Journal of Applied and Computational Topology},
  volume = {9},
  number = {4},
  pages = {26},
  issn = {2367-1726, 2367-1734},
  doi = {10.1007/s41468-025-00222-y},
  langid = {english}
}

@article{multipers,
  title = {Multipers: {{Multiparameter Persistence}} for {{Machine Learning}}},
  shorttitle = {Multipers},
  author = {Loiseaux, David and Schreiber, Hannah},
  year = {2024},
  month = nov,
  journal = {Journal of Open Source Software},
  volume = {9},
  number = {103},
  pages = {6773},
  issn = {2475-9066},
  doi = {10.21105/joss.06773},
  langid = {english},
}

@article{lesnick2022computing,
  title={Computing minimal presentations and bigraded {B}etti numbers of 2-parameter persistent homology},
  author={Lesnick, Michael and Wright, Matthew},
  journal={SIAM Journal on Applied Algebra and Geometry},
  volume={6},
  number={2},
  pages={267--298},
  year={2022},
  publisher={SIAM}
}

@inproceedings{alonso2024delaunay,
  title={Delaunay bifiltrations of functions on point clouds},
  author={Alonso, {\'A}ngel Javier and Kerber, Michael and Lam, Tung and Lesnick, Michael},
  booktitle={Proceedings of the 2024 Annual ACM-SIAM Symposium on Discrete Algorithms (SODA)},
  pages={4872--4891},
  year={2024},
  organization={SIAM}
}

@article{bjerkevik2021ell,
  title = {$\ell^{p}$-Distances on Multiparameter Persistence Modules},
  author = {Bjerkevik, H{\aa}vard Bakke and Lesnick, Michael},
  journal = {arXiv preprint arXiv:2106.13589},
  year = {2021}
}

@article{oudot2024stability,
  title={On the stability of multigraded {B}etti numbers and {H}ilbert functions},
  author={Oudot, Steve and Scoccola, Luis},
  journal={SIAM Journal on Applied Algebra and Geometry},
  volume={8},
  number={1},
  pages={54--88},
  year={2024},
  publisher={SIAM}
}

@book{mac2013categories,
  title={Categories for the working mathematician},
  author={Mac Lane, Saunders},
  volume={5},
  year={2013},
  publisher={Springer Science \& Business Media}
}

@book{mortersBrownianMotion2010,
  title = {Brownian Motion},
  author = {M{\"o}rters, Peter and Peres, Yuval},
  year = {2010},
  volume = {30},
  publisher = {Cambridge University Press}
}

@article{fasyConfidenceSetsPersistence2014,
  title = {Confidence Sets for Persistence Diagrams},
  author = {Fasy, Brittany Terese and Lecci, Fabrizio and Rinaldo, Alessandro and Wasserman, Larry and Balakrishnan, Sivaraman and Singh, Aarti},
  year = {2014},
  month = dec,
  journal = {The Annals of Statistics},
  volume = {42},
  number = {6},
  eprint = {1303.7117},
  issn = {0090-5364},
  doi = {10/gfrbfh},
  archiveprefix = {arXiv},
  langid = {english}
}

@article{niyogiFindingHomologySubmanifolds2008,
  title = {Finding the {{Homology}} of {{Submanifolds}} with {{High Confidence}} from~{{Random~Samples}}},
  author = {Niyogi, Partha and Smale, Stephen and Weinberger, Shmuel},
  year = {2008},
  month = mar,
  journal = {Discrete \& Computational Geometry},
  volume = {39},
  number = {1},
  pages = {419--441},
  issn = {1432-0444},
  doi = {10.1007/s00454-008-9053-2},
  langid = {english}
}

\appendix

\section{Proof of Theorem~\ref{thm:stability_betti_vertical}}
\label{sec:proof_stability_inv}
The proof closely mirrors that of \cite[Theorem~1]{oudot2024stability}, with
two key differences. The first one is that the proof
in~\cite{oudot2024stability} invokes the stability theorem for free persistence
modules under ordinary interleaving proven in~\cite{bakke2021stability}, while
our proof relies on the stability of free persistence modules under  vertical
interleaving, which we state as Lemma~\ref{lem:stability_free_flow}. That
lemma, in turn, relies on two intermediate results, namely:
Lemma~\ref{lemma:nonzero_morphism_same_region} and
Proposition~\ref{prop:submodules_interleaving}. The second key difference
with~\cite{oudot2024stability} is that our proof uses a persistent version of
Schanuel's lemma that deals with vertically interleaved projective resolutions,
whereas the version used in~\cite{oudot2024stability} is for ordinarily
interleaved projective resolutions. We state our version as
Lemma~\ref{lem:cor9_betti_paper}.

Let $F:[a,b]\times\rn\to\vect$ be a free persistence module. For any free
interval $J=[x_0,b]\times [x_1,+\infty)\times \cdots\times[x_n,+\infty)\subset [a,b]\times\rn$, define
$\alpha(J):=x_0\in [a,b]$. Then, for any $\delta\in[a,b]$, define the submodule:
\[ F^{\delta} := \bigoplus\limits_{\substack{J\in\BB(F)\\ \alpha(J)=\delta} } \kkk^J. \]

\begin{lemma}\label{lemma:nonzero_morphism_same_region}
	Let $J,J',K$ be free intervals in $[a,b]\times\rn$ and let $\e\geq 0$. Suppose there are two non-zero morphisms
	\(f_{J,K}:\kkk^{J}\rightarrow \kkk^{K}[\e\oneo]\) and \(g_{K,J'}:\kkk^{K}\rightarrow \kkk^{J'}[\e\oneo].\)
	If $\alpha(J)=\alpha(J')=\delta\in [a,b]$, then $\alpha(K)=\delta$.
\end{lemma}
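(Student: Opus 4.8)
The strategy is to understand exactly when a morphism between two free interval modules (supported on free intervals, i.e., upsets with first coordinate bounded below and last coordinate $b$ bounded above along the $\delta$-axis, and unbounded above in all other coordinates) can be non-zero, and what this forces about the relative positions of the supports. A free interval $J$ has support $[\alpha(J), b] \times [x_1, +\infty) \times \cdots \times [x_n, +\infty)$; write its "lower corner" as $\ell(J) = (\alpha(J), x_1, \ldots, x_n)$. A non-zero morphism $\kkk^J \to \kkk^K[\e\oneo]$ exists if and only if the shifted support $K[\e\oneo] = K - \e\oneo$ (shifting the interval down by $\e\oneo$) contains the lower corner $\ell(J)$, equivalently $\ell(K) - \e\oneo \leq \ell(J)$ and the two supports overlap on an up-set; since both are free intervals (up-sets in the last $n$ coordinates, with the same cap $b$ in the first coordinate), the overlap condition reduces precisely to $\ell(K)[\e\oneo] \leq \ell(J)$, which componentwise reads $\alpha(K) \leq \alpha(J)$ in the first coordinate (no shift there, since $\oneo = (0,1,\ldots,1)^\top$) and $(K)_i - \e \leq (J)_i$ for $i = 1, \ldots, n$.

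So the key observation, which I would isolate first, is: a non-zero morphism $\kkk^J \to \kkk^K[\e\oneo]$ forces $\alpha(K) \leq \alpha(J)$. The reason is that the $\oneo$-shift leaves the first coordinate untouched, so non-vanishing requires $K$'s support (before shifting) to already reach down to $\alpha(J)$ in the first coordinate, i.e. $\alpha(K) \leq \alpha(J)$. Applying this to the two given morphisms: from $f_{J,K}$ non-zero we get $\alpha(K) \leq \alpha(J) = \delta$; from $g_{K,J'}$ non-zero we get $\alpha(J') \leq \alpha(K)$, i.e. $\delta \leq \alpha(K)$. Combining, $\alpha(K) = \delta$.

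The main thing to get right — the only real obstacle — is the precise characterization of non-zero morphisms between free interval modules and, in particular, the claim that non-vanishing genuinely requires $\alpha(K) \le \alpha(J)$ and not merely some overlap that could be achieved by the shift in the other coordinates. This is where the specific shape of $\oneo$ (zero in the first slot) is essential: the shift cannot "pull $K$ downward" in the $\delta$-direction, so the first-coordinate constraint is rigid. I would make this rigorous by recalling that $\mathrm{Hom}(\kkk^J, \kkk^K[\e\oneo])$ is non-zero iff there exists a point in $J$ whose image under the (candidate) morphism is forced to be non-zero and compatible with the module structure — concretely, iff $J \cap (K - \e\oneo) \neq \emptyset$ and this intersection is "upward-closed enough" that the identity on $\kkk$ over it extends to a natural transformation; for free (up-set) intervals with a common first-coordinate ceiling $b$, this holds iff $\ell(K) - \e\oneo \le \bmy$ for some $\bmy \in J$, and since $J$ itself is the up-set $[\ell(J), \ldots]$ capped at $b$ in the first coordinate, this is equivalent to $\ell(K) - \e\oneo \le \ell(J)$ (the cap at $b$ is shared and irrelevant here as $\alpha(J), \alpha(K) \le b$). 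Once this lemma-level fact is in place, the two inequalities and their combination are immediate, completing the proof.
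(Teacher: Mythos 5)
Your proposal is correct and takes essentially the same approach as the paper: both arguments hinge on the fact that a non-zero morphism $\kkk^J \to \kkk^K[\e\oneo]$ between free interval modules forces $\min(K) - \e\oneo \leq \min(J)$, which in the first coordinate (where $\oneo$ vanishes) gives $\alpha(K)\leq\alpha(J)$, and the two resulting inequalities sandwich $\alpha(K)$ to equal $\delta$. The only difference is that the paper cites this morphism criterion from the literature (Lemma~4.5 of the reference on free-module stability), whereas you re-derive it; one small caution is that the clean form of the criterion is that $\min(J)$ itself must lie in $K-\e\oneo$, not merely that some point of $J$ does, but your opening formulation and your conclusion state it correctly.
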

\begin{proof}
	Since $f_{I,K}$ and $g_{K,J'}$ are non-zero, \cite[Lemma~4.5]{bakke2021stability} implies that:
	\begin{align*}
		\begin{aligned}
			\min(J) & \geq \min(K)-\e\oneo,  \\
			\min(K) & \geq \min(J')-\e\oneo.
		\end{aligned}
	\end{align*}
	It follows that $\delta=\alpha(J)\geq \alpha(K)\geq \alpha(J')=\delta$, so $\alpha(K)=\delta$.
\end{proof}

\begin{proposition}\label{prop:submodules_interleaving}
	Let $M,N:[a,b]\times\rn\to\vect$ be two free persistence modules and let
	$\varepsilon\geq 0$. If $M$ and $N$ are vertically
	${\varepsilon}$-interleaved, then the submodules $M^{\delta}$ and $N^{\delta}$ are also vertically
	${\varepsilon}$-interleaved, for any $\delta\in [a,b]$.
\end{proposition}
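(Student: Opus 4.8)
The plan is to show that the interleaving morphisms between $M$ and $N$ restrict to morphisms between the submodules $M^\delta$ and $N^\delta$, and that the restricted morphisms still satisfy the interleaving identities. Write $f\colon M\to N[\e\oneo]$ and $g\colon N\to M[\e\oneo]$ for the given vertical $\e$-interleaving. Since $M=\bigoplus_{\delta'} M^{\delta'}$ and $N=\bigoplus_{\delta'} N^{\delta'}$ as a (finite, by the p.f.d. hypothesis together with finite presentability, or at any rate locally finite) direct sum over the values $\delta'=\alpha(J)$ occurring in the barcodes, the morphism $f$ decomposes into components $f^{\delta',\delta''}\colon M^{\delta'}\to N^{\delta''}[\e\oneo]$, and similarly for $g$. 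The key claim is that the "off-diagonal" components vanish: $f^{\delta',\delta''}=0$ and $g^{\delta',\delta''}=0$ whenever $\delta'\neq\delta''$.

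First I would establish this vanishing. Fix $\delta'\neq\delta''$ and consider $f^{\delta',\delta''}\colon M^{\delta'}\to N^{\delta''}[\e\oneo]$. Further decompose into summands $\kkk^J\to\kkk^K[\e\oneo]$ with $\alpha(J)=\delta'$ and $\alpha(K)=\delta''$; suppose for contradiction that one such component $f_{J,K}$ is nonzero. Now apply $g$ to get back into $M[2\e\oneo]$: because $g[\e\oneo]\circ f=\varphi^{2\e\oneo}_M$ and $\varphi^{2\e\oneo}_M$ is (a direct sum of) the canonical shift maps $\kkk^J\to\kkk^J[2\e\oneo]$, which are nonzero, there must be a free interval $J'$ with $\alpha(J')=\delta'$ such that the composite $\kkk^J\xrightarrow{f_{J,K}}\kkk^K[\e\oneo]\xrightarrow{g_{K,J'}[\e\oneo]}\kkk^{J'}[2\e\oneo]$ is nonzero; in particular both $f_{J,K}$ and $g_{K,J'}$ are nonzero. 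By Lemma~\ref{lemma:nonzero_morphism_same_region}, $\alpha(K)=\alpha(J')=\delta'$, i.e. $\delta''=\delta'$, a contradiction. The same argument with the roles of $M$ and $N$ swapped kills the off-diagonal components of $g$.

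Having the vanishing, the proof concludes quickly. The restriction of $f$ to $M^\delta$ has image inside $N^\delta[\e\oneo]$ (off-diagonal components are zero), so it defines $f^\delta\coloneq f^{\delta,\delta}\colon M^\delta\to N^\delta[\e\oneo]$; symmetrically $g^\delta\coloneq g^{\delta,\delta}\colon N^\delta\to M^\delta[\e\oneo]$. To check the interleaving identities, note that $g[\e\oneo]\circ f=\varphi^{2\e\oneo}_M$ respects the direct-sum decomposition, and on the $M^\delta$ summand the only contribution to the $M^\delta$-component of the output comes from $g^{\delta,\delta}[\e\oneo]\circ f^{\delta,\delta}$ (all other routes pass through some $N^{\delta''}$ with $\delta''\neq\delta$ and hence involve a vanishing component); therefore $g^\delta[\e\oneo]\circ f^\delta=(\varphi^{2\e\oneo}_M)|_{M^\delta}=\varphi^{2\e\oneo}_{M^\delta}$. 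Likewise $f^\delta[\e\oneo]\circ g^\delta=\varphi^{2\e\oneo}_{N^\delta}$. Hence $M^\delta$ and $N^\delta$ are vertically $\e$-interleaved.

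The main obstacle is the bookkeeping around the direct-sum decomposition: one must be careful that $M$ and $N$ genuinely split as $\bigoplus_\delta M^\delta$ (this uses that $F$ is free, hence interval-decomposable into free intervals, so the grouping by $\alpha$-value is well defined) and that a morphism out of a direct sum is determined by its components, with the caveat that this is cleanest when the index set of $\delta$-values is finite or at least the decomposition is locally finite — which holds here since everything in sight is pointwise finite-dimensional and finitely presentable over a closed slab. Once the decomposition is set up properly, the only real content is the vanishing of off-diagonal components, and that is exactly what Lemma~\ref{lemma:nonzero_morphism_same_region} is tailored to deliver.
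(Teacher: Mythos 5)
Your plan rests on the claim that the off-diagonal components $f^{\delta',\delta''}$ (with $\delta'\neq\delta''$) vanish, but this claim is false, and the deduction you offer for it is a non sequitur. From $g[\e\oneo]\circ f=\varphi^{2\e\oneo}_M$ restricted to the summand $\kkk^J$ you can only conclude that the \emph{total} sum $\sum_{K'}g_{K',J}[\e\oneo]\circ f_{J,K'}$ is the (nonzero) shift map on $\kkk^J$; you cannot conclude that the particular $K$ you fixed (with $f_{J,K}\neq 0$ and $\alpha(K)=\delta''\neq\delta'$) contributes a nonzero term, nor that it admits any $J'$ with $\alpha(J')=\delta'$ making $g_{K,J'}[\e\oneo]\circ f_{J,K}\neq 0$. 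In fact the constraint $\min(J)\geq\min(K)-\e\oneo$ from \cite[Lemma~4.5]{bakke2021stability} only kills $f_{J,K}$ when $\alpha(J)<\alpha(K)$ (since $\e\oneo$ has first coordinate $0$); when $\alpha(J)>\alpha(K)$ nothing forces $f_{J,K}=0$. A concrete counterexample: take $n=1$, $[a,b]=[0,2]$, $\e=1$, $M=\kkk^{J_1}\oplus\kkk^{J_2}$ and $N=\kkk^{K_1}\oplus\kkk^{K_2}$ with $J_1=K_1=[0,2]\times[0,\infty)$, $J_2=K_2=[1,2]\times[0,\infty)$, and interleaving matrices $f=\left(\begin{smallmatrix}1&1\\0&1\end{smallmatrix}\right)$, $g=\left(\begin{smallmatrix}1&-1\\0&1\end{smallmatrix}\right)$. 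This is a valid vertical $1$-interleaving, yet $f_{J_2,K_1}\neq 0$ with $\alpha(J_2)=1\neq 0=\alpha(K_1)$.

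The paper sidesteps this by never asserting that individual $f_{J,K}$ vanish: it shows only that the \emph{composite} terms $g_{K,J'}[\e\oneo]\circ f_{J,K}$ with $\alpha(J)=\alpha(J')=\delta$ vanish unless $\alpha(K)=\delta$. This is exactly what Lemma~\ref{lemma:nonzero_morphism_same_region} gives: if both $f_{J,K}$ and $g_{K,J'}$ are nonzero then $\alpha(J)\geq\alpha(K)\geq\alpha(J')$, forcing $\alpha(K)=\delta$; so any term with $\alpha(K)\neq\delta$ has at least one zero factor. One then verifies that, with $f^\delta,g^\delta$ defined as the diagonal blocks (which you define the same way), the composites $g^\delta[\e\oneo]\circ f^\delta$ and $f^\delta[\e\oneo]\circ g^\delta$ agree componentwise with $\varphi^{2\e\oneo}_{M^\delta}$ and $\varphi^{2\e\oneo}_{N^\delta}$ because the discarded terms were all zero. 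Your final step would follow just as easily from this weaker statement about composites, so the fix is to drop the off-diagonal-vanishing claim entirely and argue at the level of the sums appearing in the interleaving identities, as the paper does.
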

\begin{proof}
	The proof follows the same idea as that of \cite[Lemma~4.9]{bakke2021stability}.
	Since $M$ and $N$ are free persistence modules, we write $M\cong \bigoplus_{J\in \BB(M)}\kkk^{J}$ and $N\cong
	\bigoplus_{K\in \BB(N)}\kkk^{K}$.
	Considering $M$ and $N$ are vertically ${\varepsilon}$-interleaved, there exist two morphisms $f:M\rightarrow
	N[\e\oneo]$ and $g:N\rightarrow M[\e\oneo]$ such that
	\begin{align}
		g[\e\oneo]\circ f & = \varphi_{M}^{2\e\oneo},\label{equation:fg_phi} \\
		f[\e\oneo]\circ g & = \varphi_{N}^{2\e\oneo}.\label{equation:gf_phi}
	\end{align}
	For any $J\in \BB(M)$ and $K\in \BB(N)$, define morphisms $f_{J,K}:\kkk^J\to\kkk^K[\e\oneo]$ and
	$g_{K,J}:\kkk^K\to\kkk^J[\e\oneo]$ as the following compositions, respectively:
	\begin{align}
		f_{J,K} & :\kkk^{J} \xhookrightarrow{\iota_J} M \xrightarrow{f}N[\e\oneo]\stackrel{\pi_K[\e\oneo]}\twoheadrightarrow
		\kkk^{K}[\e\oneo],\label{equation:fxy_diagram}  \\
		g_{K,J} & :\kkk^{K}\xhookrightarrow{\iota_K} N \xrightarrow{g} M[\e\oneo]\stackrel{\pi_J[\e\oneo]}\twoheadrightarrow
		\kkk^{J}[\e\oneo], \label{equation:gyx_diagram}
	\end{align}
	where $\iota_\bullet$ is the canonical injection into the direct sum, and $\pi_\bullet$ is the canonical projection
	from the direct sum.

	Morphisms $f$ and $g$ are assembled from $\{f_{J,K}\}_{J\in\BB(M),
	K\in\BB(N)}$ and $\{g_{K,J}\}_{K\in\BB(N), J\in\BB(M)}$, respectively. Then
	define $f^{\delta}:M^{\delta}\to N^{\delta}[\e\oneo]$ assembled (in the same
	way as $f$) from $f^{\delta}_{J,K}:=f_{J,K}$ for all $J\in\BB(M)$ and $K\in\BB(N)$ such that
	$\alpha(J)=\alpha(K)=\delta$. Let $g^{\delta}: N^{\delta} \to M^{\delta}[\e\oneo]$ be defined analogously.
	If we can prove that the following equation holds for all $J,J'\in\BB(M)$ such that $\alpha(J)=\alpha(J')=\delta$:
	\begin{equation}\label{equ:interleaving_RJ}
		\sum\limits_{K\in\BB(N)}g_{K,J'}[\e\oneo]f_{J,K}
		= \sum\limits_{\substack{ K\in\BB(N) \\ \alpha(K)=\delta}}g^{\delta}_{K,J'}[\e\oneo]f^{\delta}_{J,K},
	\end{equation}
	and that the following equation holds  for all $K,K'\in\BB(N)$ such that $\alpha(K)=\alpha(K')=\delta$:
	\begin{equation}\label{equ:interleaving_RK}
		\sum\limits_{J\in\BB(M)}f_{J,K'}[\e\oneo]g_{K,J}
		= \sum\limits_{\substack{ J\in\BB(M) \\ \alpha(J)=\delta}}f^{\delta}_{J,K'}[\e\oneo]g^{\delta}_{K,J},
	\end{equation}
	then $f^{\delta}$ and $g^{\delta}$ are $\e\oneo$-interleaving morphisms
	between $M^\delta$ and $N^\delta$. The reason is because the left-hand side in~\eqref{equ:interleaving_RJ} is equal to
	the composition:
	\[ \kkk^{J}\xhookrightarrow{\iota_J} M
		\xrightarrow{\varphi^{2\varepsilon\oneo}_{M}}M[2\e\oneo]\stackrel{\pi_{J'}[2\e\oneo]}\twoheadrightarrow
	\kkk^{J'}[2\e\oneo], \]
	while the right-hand side is equal to the composition:
	\[ \kkk^{J}\xhookrightarrow{\iota_J} M^\delta \xrightarrow{f^\delta}
		N^\delta[\e\oneo]\xrightarrow{g^\delta[\e\oneo]} M^\delta[2\e\oneo] \stackrel{\pi_{J'}[2\e\oneo]}\twoheadrightarrow
	\kkk^{J'}[2\e\oneo]. \]
	And similarly for~\eqref{equ:interleaving_RK}.
	So, all that remains to do is to prove  \eqref{equ:interleaving_RJ} and
	\eqref{equ:interleaving_RK}. Here we only prove \eqref{equ:interleaving_RJ}, because the proof of
	\eqref{equ:interleaving_RK} is similar.

	Lemma~\ref{lemma:nonzero_morphism_same_region} says that, if
	$g_{K,J'}[\e\oneo]f_{J,K}\neq 0$ and $\alpha(J)=\alpha(J')=\delta$, then
	$\alpha(K)=\delta$. Thus, the left-hand side of \eqref{equ:interleaving_RJ} is equal to
	\[\sum\limits_{\substack{K\in\BB(N) \\ \alpha(K)=\delta}}g_{K,J'}[\e\oneo]f_{J,K}.\]
	By definition, $g_{K,J'}=g^{\delta}_{K,J'}$ and $f_{J,K}=f^{\delta}_{J,K}$ when
	$\alpha(J)=\alpha(K)=\alpha(J')=\delta$. This proves~\eqref{equ:interleaving_RJ}.
\end{proof}

\begin{lemma}\label{lem:stability_free_flow}
	Let $M,N:[a,b]\times\rn\to\vect$ be free persistence modules of finite rank. We have:
	\[
		\dBv(\BB(M),\BB(N))\leq
		\begin{cases}
			(n-1)\dIv(M,N) & \text{if}\ n>1, \\
			\dIv(M,N)      & \text{if}\ n=1.
		\end{cases}
	\]
\end{lemma}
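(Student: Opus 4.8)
The strategy is to reduce the vertical stability statement for free modules to the \emph{ordinary} stability statement for free modules from \cite{bakke2021stability}, applied slab-by-slab, by exploiting Proposition~\ref{prop:submodules_interleaving}. The key observation is that a free interval $J=[x_0,b]\times[x_1,+\infty)\times\cdots\times[x_n,+\infty)$ in $[a,b]\times\rn$ is pinned to a specific value $\alpha(J)=x_0$ of the first coordinate, and the first coordinate is exactly the direction along which $\oneo=(0,1,\dots,1)^\top$ does \emph{not} move. Thus, after splitting $M$ and $N$ according to the $\alpha$-value, each summand $M^\delta$, $N^\delta$ lives entirely in the hyperplane $\{\delta\}\times\rn$, where a vertical $\varepsilon$-interleaving is literally an ordinary $\varepsilon$-interleaving of $(n)$-parameter modules, and the bottleneck distances likewise coincide.

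First I would set $\e:=\dIv(M,N)$ (assuming it is finite; otherwise there is nothing to prove) and invoke Proposition~\ref{prop:submodules_interleaving} to conclude that for every $\delta\in[a,b]$ the submodules $M^\delta$ and $N^\delta$ are vertically $\e$-interleaved. Next I would observe that, since every interval in $\BB(M^\delta)$ and $\BB(N^\delta)$ has $\alpha$-value equal to $\delta$, these modules are supported in the hyperplane $H_\delta=\{\delta\}\times\rn$; restricting to $H_\delta\cong\rn$ (which does not change the barcodes, only reindexes them by dropping the constant first coordinate), a vertical $\e$-interleaving becomes an ordinary $\e$-interleaving of free $\rn$-modules, and $\dBv$ restricted to barcodes living in $H_\delta$ is just $\dBo$ on the reindexed barcodes. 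Applying the stability theorem for free modules from \cite{bakke2021stability}---which gives $\dBo(\BB(M^\delta),\BB(N^\delta))\le (n-1)\,\dIo(M^\delta,N^\delta)$ when $n>1$ and $\dBo(\BB(M^\delta),\BB(N^\delta))\le \dIo(M^\delta,N^\delta)$ when $n=1$---yields a vertical $(n-1)\e$-bottleneck matching (resp.\ $\e$-bottleneck matching) between $\BB(M^\delta)$ and $\BB(N^\delta)$ for each $\delta$.

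Finally I would assemble these matchings. Because $\BB(M)=\bigsqcup_{\delta}\BB(M^\delta)$ and $\BB(N)=\bigsqcup_{\delta}\BB(N^\delta)$ as a disjoint union over the (finitely many, by finite rank) values $\delta$ that actually occur as $\alpha$-values, the union of the per-slab bottleneck matchings is a single bottleneck matching between $\BB(M)$ and $\BB(N)$; its cost is the supremum of the per-slab costs, hence at most $(n-1)\e$ (resp.\ $\e$). Taking the infimum over $\e$ (i.e.\ using $\e=\dIv(M,N)$, or a sequence approaching it) gives the claimed inequality. The main obstacle—really a point requiring care rather than a genuine difficulty—is the bookkeeping that identifies vertical interleavings/matchings of hyperplane-supported modules with ordinary interleavings/matchings after restriction to $H_\delta$: one must check that the $\oneo$-shift functor restricts to the ordinary shift functor on $H_\delta$ (immediate, since $\oneo$ has zero first coordinate so $H_\delta$ is shift-invariant) and that no two intervals with distinct $\alpha$-values can be matched at finite vertical cost, which is precisely the content of Lemma~\ref{lemma:nonzero_morphism_same_region} and is already baked into Proposition~\ref{prop:submodules_interleaving}.
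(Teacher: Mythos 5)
Your proposal is correct and follows essentially the same route as the paper: decompose $M$ and $N$ into the summands $M^\delta$, $N^\delta$ indexed by $\alpha$-value via Proposition~\ref{prop:submodules_interleaving}, restrict to the hyperplane $\{\delta\}\times\rn$ to reduce vertical interleaving to ordinary interleaving, invoke the free-module stability of \cite{bakke2021stability}, and reassemble the per-$\delta$ matchings using $\BB(M)=\bigsqcup_\delta\BB(M^\delta)$. Two small points of precision: the modules $M^\delta$ are not literally \emph{supported} in $H_\delta$ (their intervals span $[\delta,b]$ in the first coordinate), though they are determined by their restriction to $H_\delta$, which is what your argument actually uses; and for $n=1$ the paper appeals to the classical one-parameter stability theorem rather than to \cite{bakke2021stability}, a cosmetic difference since the bound is the same.
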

\begin{proof}
	If $\dIv(M,N)=+\infty$, then the inequality holds trivially. Otherwise, let
	$\varepsilon=\dIv(M,N)<+\infty$. By
	Proposition~\ref{prop:submodules_interleaving}, for each $\delta\in [a,b]$ the
	submodules $M^{\delta}$ and $N^{\delta}$ are vertically
	${\varepsilon}$-interleaved which means that the restricted modules
	$M^{\delta}|_{\{\delta\}\times\rn}$ and $N^{\delta}|_{\{\delta\}\times\rn}$
	are ordinarily $\e$-interleaved. From \cite[Theorem~4.12]{bakke2021stability},
	when $n\geq 2$, there exists a $(n-1)\e\bm{1}$-bottleneck matching between
	$\BB(M^{\delta}|_{\{\delta\}\times\rn})$ and
	$\BB(N^{\delta}|_{\{\delta\}\times\rn})$, which induces a
	$(n-1)\e\oneo$-bottleneck matching between $\BB(M^{\delta})$ and
	$\BB(N^{\delta})$, since $\BB(M^{\delta})=\{ [\delta,b]\times I \mid I\in
	\BB(M^{\delta}|_{\{\delta\}\times\rn})\}$ and $\BB(N^{\delta})=\{ [\delta,b]\times I \mid I\in
	\BB(N^{\delta}|_{\{\delta\}\times\rn})\}$.
	Note that $M\cong \bigoplus _{\delta\in[a,b]}M^{\delta}$ and  $N\cong
	\bigoplus _{\delta\in[a,b]}N^{\delta}$, so $\BB(M)=\bigsqcup_{\delta\in[a,b]}
	\BB(M^{\delta})$ and  $\BB(N)=\bigsqcup_{\delta\in[a,b]} \BB(N^{\delta})$. By
	combining $(n-1)\e\oneo$-bottleneck matchings between $\BB(M^{\delta})$ and
	$\BB(N^{\delta})$ for all $\delta\in[a,b]$, we obtain a  ${(n-1)\e\oneo}$-bottleneck matching between $\BB(M)$ and
	$\BB(N)$.

	When $n=1$, the stability theorem for $1$-parameter persistence
	modules~\cite{bauer2015induced,chazal2009proximity,chazal2016structure} says
	that there exists a $\e\bm{1}$-bottleneck matching between
	$\BB(M^\delta|_{\{\delta\}\times\RRR})$ and $\BB(N^\delta|_{\{\delta\}\times\RRR})$. The conclusion in this case
	follows then by the same argument as above.
\end{proof}


\begin{lemma}\label{lem:cor9_betti_paper}
	Let $M,N:[a,b]\times\rn\to\vect$ be persistence modules. Let
	$P_\bullet\twoheadrightarrow M$ and $Q_\bullet \twoheadrightarrow N$ be
	projective resolutions of length at most $l$. If $M$ and $N$ are vertically
	${\varepsilon}$-interleaved, then $\bigoplus _{i\in\NNN}P_{2i}\oplus\bigoplus
	_{i\in\NNN}Q_{2i+1}$ and $\bigoplus _{i\in\NNN}P_{2i+1}\oplus\bigoplus _{i\in\NNN}Q_{2i}$ are vertically
	${(l+1)\varepsilon}$-interleaved.
\end{lemma}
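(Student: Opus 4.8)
The plan is to adapt the proof of the persistent Schanuel lemma from the ordinary-interleaving setting (the version used in~\cite{oudot2024stability}) to the vertical-interleaving setting, using the fact that vertical shifts commute with everything in sight and that projective (= free) modules over $[a,b]\times\rn$ behave well under restriction to vertical hyperplanes. The key structural input is that a vertical $\e$-interleaving between $M$ and $N$ can be lifted to a vertical $\e$-interleaving between any pair of projective resolutions, from which the statement follows by the same telescoping/mapping-cone bookkeeping as in the classical iterated Schanuel lemma.

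\textbf{Step 1: Lift the interleaving to the level of resolutions.} Let $f\colon M\to N[\e\oneo]$ and $g\colon N\to M[\e\oneo]$ be the vertical interleaving morphisms. Since each $P_i$ is projective and each differential in $Q_\bullet$ is an epimorphism onto the relevant kernel, one constructs by induction on $i$ chain maps $\widetilde f_\bullet\colon P_\bullet\to Q_\bullet[\e\oneo]$ lifting $f$ and $\widetilde g_\bullet\colon Q_\bullet\to P_\bullet[\e\oneo]$ lifting $g$; this is the standard comparison-theorem argument for projective resolutions, and it goes through verbatim because $-[\e\oneo]$ is exact and sends projectives to projectives. Moreover, $\widetilde g_\bullet[\e\oneo]\circ\widetilde f_\bullet$ and $\varphi^{2\e\oneo}_{P_\bullet}$ are two chain maps $P_\bullet\to P_\bullet[2\e\oneo]$ lifting the same map $\varphi^{2\e\oneo}_M$ (since $g[\e\oneo]\circ f=\varphi^{2\e\oneo}_M$), hence they are chain-homotopic; symmetrically on the $Q$ side.

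\textbf{Step 2: Build the interleaving between the two big sums via a mapping-cone / telescope argument.} Write $A:=\bigoplus_{i}P_{2i}\oplus\bigoplus_i Q_{2i+1}$ and $B:=\bigoplus_i P_{2i+1}\oplus\bigoplus_i Q_{2i}$. The point is that, because the resolutions have length at most $l$, one can splice the differentials of $P_\bullet$, of $Q_\bullet$, and the lifted comparison maps $\widetilde f_\bullet,\widetilde g_\bullet$ into two explicit morphisms $A\to B[l\e\oneo]$ and $B\to A[l\e\oneo]$ — or, more conceptually, one observes that the mapping cone of $\widetilde f_\bullet$ is an exact complex of projectives concentrated in degrees $0,\dots,l+1$, and an exact bounded complex of projectives is contractible; the contracting homotopy, together with the shifts introduced by the interleaving and by $\varphi^{2\e\oneo}$, assembles into the desired $(l+1)\e\oneo$-interleaving maps between $A$ and $B$. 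The shift budget is accounted for exactly as in the iterated application of Schanuel's lemma: passing from $P_0,Q_0$ up to $P_l,Q_l$ costs one $\e\oneo$-shift at each of the $l+1$ stages where $M$ or $N$ is ``consumed'' by a differential, hence the total $(l+1)\e\oneo$.

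\textbf{The main obstacle} I expect is Step 2: keeping careful track of which summand lands in which shifted copy, and verifying that the composites in both directions equal the canonical shift morphisms $\varphi^{2(l+1)\e\oneo}_A$ and $\varphi^{2(l+1)\e\oneo}_B$ rather than merely being chain-homotopic to them — since an interleaving is a strict equality of morphisms, not a homotopy. This is precisely where the corresponding argument in~\cite{oudot2024stability} (for ordinary interleavings) does the delicate work, and the adaptation should be ``more or less direct'' because every occurrence of the shift functor $-[\e\bm 1]$ there can be replaced by $-[\e\oneo]$ without changing any diagram: the vertical shift functor is still exact, still preserves projectives and epimorphisms, and still admits the canonical natural transformation $\varphi^{2\e\oneo}$, so all the diagrammatic identities are preserved. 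I would therefore write the proof by citing the relevant lemma of~\cite{oudot2024stability} and indicating the one substitution ($\bm 1\rightsquigarrow\oneo$) that makes it work in the present setting, rather than reproducing the full homological-algebra computation.
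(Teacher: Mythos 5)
Your final plan --- cite~\cite[Corollary~9]{oudot2024stability} and observe that every appearance of the shift direction $\bm{1}$ in that proof may be replaced by $\oneo$ --- is exactly the paper's proof, which consists of the single sentence that the cited argument is oblivious to the type of interleaving. One small correction to your sketched Step~2: the mapping cone of $\widetilde{f}_\bullet$ is in general \emph{not} exact (exactness would force $f$ to be an isomorphism), but since you already flag the homotopy-versus-strict-equality issue there and fall back on the citation anyway, this does not affect your conclusion.
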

The  proof of this lemma is literally the same as the one of~\cite[Corollary~9]{oudot2024stability}, which is oblivious
to the type of interleaving.

\begin{proof}[Proof of Theorem~\ref{thm:stability_betti_vertical}]
	By Hilbert's syzygy theorem, the minimal projective resolutions
	$P_\bullet\twoheadrightarrow M$ and $Q_\bullet\twoheadrightarrow N$ have
	length at most $n+1$. Then, Lemma~\ref{lem:cor9_betti_paper} implies that the
	free modules $\bigoplus _{i\in\NNN}P_{2i} \oplus \bigoplus
	_{i\in\NNN}Q_{2i+1}$ and $\bigoplus _{i\in\NNN}P_{2i+1} \oplus \bigoplus _{i\in\NNN}Q_{2i}$ are vertically
	${(n+2)\varepsilon}$-interleaved. Their respective barcodes are:
	\begin{align*}
		\BB\big( \bigoplus _{i\in\NNN}P_{2i}\oplus\bigoplus _{i\in\NNN}Q_{2i+1} \big) & =\beta_{\even}(M)\sqcup
		\beta_{\odd}(N),\ \text{and} \\
		\BB\big( \bigoplus _{i\in\NNN}P_{2i+1}\oplus\bigoplus _{i\in\NNN}Q_{2i} \big) & =\beta_{\odd}(M)\sqcup
		\beta_{\even}(N).
	\end{align*}
	Then, Lemma~\ref{lem:stability_free_flow} implies that there exists a vertical
	${(n-1)(n+2)\varepsilon}$-bottleneck matching between $\beta_{\even}(M)\sqcup
	\beta_{\odd}(N)$ and $\beta_{\odd}(M)\sqcup \beta_{\even}(N)$ when $n\geq 2$,
	and a vertical ${3\varepsilon}$-bottleneck between $\beta_{\even}(M)\sqcup \beta_{\odd}(N)$ and $\beta_{\odd}(M)\sqcup
	\beta_{\even}(N)$ when $n=1$.
\end{proof}

\section{Proof of~\cref{cor:thm_fixed_radius_implies_tame}}\label{sec:proof_of_tame}

\begin{proof}[Proof of~\cref{cor:thm_fixed_radius_implies_tame}]
		By \cref{thm:estimator_fixed_radius}~(i), we know that for any $ \e\in(0,\rhox)$, the persistence modules $\hsf$ and
		$\hs(\OO^{\e}(\FP))$ are ordinarily $\omega(\e)$-interleaved. Therefore, there exist two morphisms
		\[
			\kappa:\hsf\to \hs(\OeFP)[\omega(\e)\bm{1}] \quad \text{and} \quad \gamma:\hs(\OeFP)\to \hs(\ff)[\omega(\e)\bm{1}]
		\]
		such that, for any $\bmx\in\rn$, the following diagram is commutative.
		\[
			\begin{tikzcd}
				\hsf_{\bmx} \arrow[rr, "\hsf_{\bmx,\bmx+2\omega(\e)\bm{1}}"] \arrow[rd, "\kappa_{\bmx}"'] &
				& \hsf_{\bmx+2\omega(\e)\bm{1}} \\
				& \hs(\OeFP)_{\bmx+\omega(\e)\bm{1}} \arrow[ru, "\gamma_{\bmx+\omega(\e)\bm{1}}"'] &
			\end{tikzcd}
		\]
		It follows that
		\begin{equation}\label{equ:tame_of_hsf}
			\rk\left(\hsf_{\bmx,\bmx+2\omega(\e)\bm{1}}\right)=\rk\left(\gamma_{\bmx+\omega(\e)\bm{1}}\circ
			\kappa_{\bmx}\right)\leq
			\rk\left(\kappa_{\bmx}\right)\leq \dim\left(\hs(\OeFP)_{\bmx+\omega(\e)\bm{1}}\right)<+\infty,
		\end{equation}
		where the final inequality holds by the finiteness of $P$ and the construction of $\OeFP$.

		Since $\omega(\e)\to 0$ as $\e \to 0$, our statement follows from~\cref{equ:tame_of_hsf} by choosing $\e$ sufficiently
		small so that $\bmx+2\omega(\e)\bm{1}\leq\bmy$.
\end{proof}

\section{More details on the statistical aspects.}\label{appendix:stats}

\begin{lemma}\label{lemma:stats:needs_thresholding}
	Let $X$ be a topological space satisfying \cref{assumption:compact_support},
	and let $(\delta_k)_{k\in\NNN}$ be a sequence of positive numbers with
	$\delta_k\to 0$. Let $\mu$ be an $(a,b)$-standard probability measure
	supported on $X$, i.e., satisfying \cref{assumption:ab_std},
	and let $X_k=(Z_1,\cdots,Z_k)$ be a $k$-sample of $\mu$ for some index $k\in\NNN$.
	For any continuous function $\ff\colon X\to\rn$,
	let $\estgenericHi$ be chosen among the following $X_k$-measurable
	estimators:
	\begin{equation}\label{eq:stats:need_threshold:estimators}
		\estoffsetdHi, \quad \estcechdHi,\quad \textnormal{or} \quad \estripsdHi.
	\end{equation}
	\begin{enumerate}[label=(\roman*)]
		\item If $H_i(X)\neq 0$ for some integer $i>0$, then for any large enough $k\in\NNN$, we have
			\begin{equation*}
				\Ebb_{X_k\sim \mu^{\otimes k}} \left( \dIo \left( \estgenericHi, \oracleHi \right) \right) = +\infty.
			\end{equation*}
		\item If $X$ is path-connected and $X$ contains at least two distinct points, then for all sufficiently large $k$, we
			have
			\begin{equation*}
				\Ebb_{X_k\sim \mu^{\otimes k}} \left( \dIo \left( \estgenericHzero, \oracleHzero \right) \right) = +\infty.
			\end{equation*}
	\end{enumerate}
\end{lemma}
\begin{proof}
	Since $\ff$ is continuous and $X$ is compact, there exists  $\bar t\in
	\RRR$ such that $ \left\lVert \ff  \right\rVert_\infty\le
	\bar t$. Let $\bar\bmt=(\bar t,\cdots,\bar t)\in\rn$.
	In particular, for any degree $i\in\mathbb{N}$ we have $\hi(\ff)_{\bar\bmt}  =\Homology_i (X)$.
	Fix $x\in X$, and consider $k$ large enough so that $a (\delta_k)^b \le 1$.
	We then have:
	\begin{equation*}
		\Pbb{} \left( X_k\subseteq B_X(x,\delta_k) \right) = \left( \mu \left( B_X(x,\delta_k) \right) \right)^k \ge \left( a
		(\delta_k)^b\right)^k > 0.
	\end{equation*}

	For the first case when $\hi(X)\neq0$ for some integer $i>0$.
	Assume $k$ is large enough so that $\delta_k < \rhox$.
	On the event $ \left\{ X_k\subseteq B_X(x,\delta_k) \right\}$,
	we have $x\in\bigcap\limits_{j\in\{1,\dots,k\}}B_X(Z_j,\delta_k)\neq\emptyset$ and $d_X(Z_j,Z_{j'})<2\delta_k$ for
	any $j,j'\in\{1,\dots,k\}$.
	Thus, we have $\estgenericHi = 0$
	for each estimator $\estgenericHi$ given in
	\cref{eq:stats:need_threshold:estimators}.
	Now, for any $\boldsymbol \bnu = (\nu,\dots,\nu)$ with $\nu>0$, we have
	$\oracleHi_{\bar\bmt} \simeq \oracleHi_{\bar\bmt+2\bnu} \simeq \im (\oracleHi_{\bar\bmt} \to
	\oracleHi_{\bar\bmt+2\bnu}) \simeq \Homology_i(X) \neq
	0$, while $\estgenericHi = 0$. Hence no $\bnu$-interleaving exists
	between these two modules, and on this event we have $\dIo \left( \estgenericHi,
	\oracleHi \right) = +\infty$.
	We conclude with the inequality:
	\begin{equation*}
		\Ebb_{X_k\sim \mu^{\otimes k}} \left[ \dIo \left( \estgenericHi, \oracleHi \right) \right]
		\ge
		\Pbb{} \left( X_k\subseteq B_X(x,\delta_k) \right)  \Ebb \left[ \dIo \left( \estgenericHi,
		\oracleHi \right) \mid  X_k\subseteq B_X(x,\delta_k)\right]
		= +\infty.
	\end{equation*}

	The second statement in the lemma follows from a similar argument.
	Fix $x \neq y\in X$, and consider $k$ large enough so that $\delta_k < \frac 1 4 d_X(x,y)$.
	Now consider the event:
	\begin{equation*}
		E_k  = \left\{ (Z_1,\dots,Z_{\lfloor k/2\rfloor}) \subseteq B_X\left(x, {\delta_k/2} \right) \right\}
		\cap
		\left\{ (Z_{\lfloor k/2\rfloor + 1},\dots,Z_{k}) \subseteq B_X\left(y, {\delta_k/2} \right) \right\}.
	\end{equation*}
	By the $(a,b)$-standardness assumption, the event $E_k$ has a positive measure.
	Furthermore, for any $\boldsymbol \nu= (\nu,\dots,\nu)$ with $\nu>0$, the chosen ball radius {$\delta_k/2$} ensures
	that, on this event,
	the homology groups $(\estgenericHzero)_{\bar\bmt}\simeq
	(\estgenericHzero)_{\bar\bmt+2\bnu}$ are two-dimensional, while
	$\hzero(X) \simeq \oracleHzero_{\bar\bmt}\simeq \oracleHzero_{\bar\bmt+\bnu}$
	is one-dimensional. Hence, there is no
	$\boldsymbol\nu$-interleaving between the persistence modules $\estgenericHzero$
	and~$\oracleHzero$. The rest of the argument is the same as for the
	first statement.
\end{proof}

\begin{lemma}\label{lemma:b_ge_1}
	Let $X$ be a compact geodesic space with convexity radius $\rhox>0$
	(\cref{assumption:compact_support}) and let $\mu$ be an $(a,b)$-standard probability measure with support $X$
	(\cref{assumption:ab_std}). Then $b \ge 1$.
\end{lemma}
\begin{proof}
	Note that this result trivially holds if $X$ is a point.
	Fix two points $x\neq y \in X$ and
	let
        $\gamma$
        be a shortest path from $x$ to $y$.
	Under \cref{assumption:compact_support}, fix $k\in \mathbb{N}_{>0}$ and consider
	the sequence of points $ x^k_0,\dots,x^k_k$ along $\gamma$
	defined by:
	\begin{enumerate}
		\item $x_0^k = x$ and $x_k^k=y$, and
		\item for each $i\in \{0,\dots,k-1\}$, $d_X(x_i^k, x_{i+1}^k) = \frac{d_X(x,y)}{k}$.
	\end{enumerate}
	Note that,
        by the triangle inequality,
        the open balls $\Bigl\{ B_X\!\left(x_i^k, \tfrac{d_X(x,y)}{2k}\right)
	\Bigr\}_{i=0}^k$ are pairwise disjoint.
	Consequently, we have:
	\begin{equation*}
		\sum_{i=0}^k\mu\left[B_X\left(x_i^k, \frac {d_X(x,y)}{2k}\right)\right]
		= \mu \left[ \bigcup_{i=0}^k B_X \left( x_i^k,\frac {d_X(x,y)} {2k}  \right)\right]
		\le \mu(X)=1.
	\end{equation*}
	Suppose, for the sake of contradiction, that $b < 1$. Then, we have:
	\begin{equation*}
		1\ge
		\sum_{i=0}^k\mu\left[B_X\left(x_i^k, \frac {d_X(x,y)} {2k}\right)\right]
		\ge
		(k+1) \cdot \min\Bigl\{ a  \left( \frac {d_X(x,y)} {2k} \right)^b, 1\Bigr\}   \xrightarrow[k\to \infty]{}\infty.
	\end{equation*}
	Hence $b\geq 1$.
\end{proof}

\begin{lemma}\label{lemma:ours_vs_bertrand_assumption}
	If $\omega\colon \mathbb{R}_{\ge 0}\to \mathbb{R}_{\ge 0}$ is such that
	$\delta\mapsto\frac{\omega(\delta)}{\delta}$ is non-increasing, then either
	\cref{assumption:mod_cont_reg} holds or $\omega = 0$.
\end{lemma}
\begin{proof}
	Assume that $\delta\mapsto \frac{\omega(\delta)}\delta$ is non-increasing
	and that \cref{assumption:mod_cont_reg} is not satisfied.
	Then, for any constant $C>0$, there exists a small enough $\delta>0$ such that
	$\delta> C\omega(\delta)$, or equivalently, $\frac{\omega(\delta)}{\delta}
	\le \frac 1 C$.
	Now, given a $T>0$, since $\delta\mapsto \frac{\omega(\delta)}\delta$ is
	non-increasing, we get $ \left\lVert t\in [\delta,T] \mapsto
	\frac{\omega(t)}t \right\rVert_\infty \le \frac 1 C$, and consequently $
	\left\lVert t\in [\delta, T] \mapsto \omega(t) \right\rVert_\infty \le \frac
	T C$.
	Letting $C$ go to infinity (hence $\delta\to 0$),
	and then $T\to\infty$, we conclude that
	$\omega = 0$ on $\mathbb{R}_{\ge 0}$.
\end{proof}

\begin{lemma}\label{lemma:no_modcont_assumption}
	Assume that $\omega$ does not satisfy
	\cref{assumption:mod_cont_reg}, and that $X$ satisfies \cref{assumption:compact_support}.
	Then,
	$ \left\{ \ff \colon X \to \mathbb{R}^n \mid \ff  \textnormal{ is $\omega$-continuous}\right\} $
	is the set of constant functions on $X$.
\end{lemma}

\begin{proof}
	Note that the result trivially holds when $X$ a point.
	Otherwise, let $x\neq y\in X$ and let
        $\gamma$
	be a shortest path from $x$ to $y$.
	Fix $\varepsilon>0$, and define $C_\varepsilon:= \frac \varepsilon {d_X(x,y)}$.
	Since~\cref{assumption:mod_cont_reg} is not satisfied, there exists a small enough $
	\eta>0$ such that
	${
		\omega(\eta)
		\le
	\eta C_\varepsilon}$.

	Then, consider the sequence $x_1^\eta, \dots, x_m^\eta$ of points along~$\gamma$ 
	such that:
	\begin{enumerate}
		\item $x_1^\eta = x$ and $x_m^\eta = y$,
		\item $d_X(x_i^\eta, x_{i+1}^\eta) = \eta$ for each $i \in \{1,\dots,m-2\}$, and
			$d_X(x_{m-1}^\eta, x_m^\eta) \leq \eta$, and
		\item $d_X(x,y) = \sum_{i=1}^{m-1} d_X(x_i^\eta, x_{i+1}^\eta)$.
	\end{enumerate}

	Then, for any $\omega$-continuous function $\ff$, we have:
	\begin{align*}
		\Vert \ff(x)-\ff(y)\Vert
		\le & \sum_{i=1}^{m-1} \Vert \ff(x_i^\eta)-\ff(x_{i+1}^\eta)\Vert_\infty                               \\
		\le & \sum_{i=1}^{m-2} \omega(\eta) + \left\lVert \ff(x_{m-1}^\eta) -\ff(x_m^\eta) \right\rVert_\infty \\
		\le & \frac {\varepsilon}{d_X(x,y)}\sum_{i=1}^{m-2} d_X(x_i^\eta,x_{i+1}^\eta)
		+ \omega(\eta)                                                                                         \\
		\le &
		\varepsilon
		+
		\omega(\eta).
	\end{align*}
	This concludes that  $\ff(x)=\ff(y)$ by letting $\eta \to
	0$ and then $\varepsilon \to 0$.
\end{proof}

\begin{lemma}\label{lemma:stats:packing_bootstrap}
	Let $X$ be a compact $d$-manifold and $\mu$
	an $(a,b)$-standard probability measure with support $X$.
	Suppose $b=d$ (this is typically true under
	\cref{assumption:manifold_support}).
	Let $X_k$ be a $k$-sample of $\mu$.
	Then, for $k$ sufficiently large, with $\hat{\delta}_k$ defined
	as in~\Cref{eq:stats:hatdeltak},
	we have:
	\begin{equation}
		\mathbb{P}\left( {\dH}(X_k,X)> \frac{\hat
		\delta_k}{2}\right) \le \frac {2^b}{2k\log(k)}.
	\end{equation}
\end{lemma}
\begin{proof}
		We follow the proof ideas of \cite[Proposition 13]{carriere2018statistical}.
		Fix $t_k := 2 \left( \frac{2\log(k)}{ak} \right)^\frac 1 b$, and consider the following events:
		\begin{equation}\label{eq:stats:def_ak_bk}
			A_k : =
			\left\{ {\dH}(X_k,X) > \frac{\hat\delta_k} 2 \right\}
			=
			\left\{ 2\,{\dH}(X_k,X) > {\dH}(X_{s_k},X_k) \right\}
			\quad \textnormal{ and }\quad
			B_k : =
			\left\{ {\dH}(X_k,X) > t_k \right\}.
		\end{equation}
		By \cref{lemma:cuevas}, we have:
		\begin{equation}\label{eq:ak_to_0}
			\begin{array}{rcl}
				\Pbb(A_k) & =     & \mathbb{P}(A_k\cap B_k) +
				\Pbb(A_k \cap \statcompl{B_k}) \le \Pbb(B_k) +\Pbb(A_k\cap \statcompl{B_k})
				\\[1ex]
				& {\le} &
				{\frac{2^b}{a(\frac{2\log(k) }{ak})}\,e^{-ak(\frac{2\log(k)}{ak})} + \Pbb(A_k\cap \statcompl{B_k})}
				= \frac{2^b}{2k\log(k) } + \Pbb(A_k\cap \statcompl{B_k}).
			\end{array}
		\end{equation}

		The proof thus reduces to showing that $A_k \cap \statcompl{B_k}$ has probability 0 for sufficiently large~$k$. We
		establish this by proving that the event becomes empty once $k$ is large enough. Assume, for the sake of
		contradiction, that the event $A_k\cap \statcompl{B_k}$ is non-empty.
		On this event, we have:
		\begin{equation}\label{eq:dH_Xsk_X}
			\dH(X_{s_k},X)
			\le
			\dH(X_{s_k},X_k) + \dH(X_k,X)
			\overset{(A_k)}\lesssim
			\dH(X_k,X)
			\overset{(B_k^c)}\le
			t_k,
		\end{equation}
		which implies that $X_{s_k}$ is a $t_k$-covering set of $X$ up to a multiplicative constant.

		We now show, using a packing argument, that this is impossible for sufficiently large $k$.
		For any scale $t>0$, let $C_t$ be an optimal $t$-covering set, and $P_t$ be an optimal $t$-packing set,
		or more formally:
		\begin{equation}
			P_t \in \argmax_{Q \subseteq X} \left\{ |Q|\in \mathbb{N} \mid  \left\{B_X(x,t)\right\}_{x\in Q} \textnormal{ are
				pairwise disjoint}
			\right\},
		\end{equation}
		and
		\begin{equation}
			C_t \in \argmin_{Q\subseteq X}\left\{ |Q|\in \mathbb{N} \mid  X\subseteq \bigcup_{x\in Q} B_X(x,t)
			\right\}.
		\end{equation}
		By \cite[Lemma 17]{fasyConfidenceSetsPersistence2014} and
		\cite[Lemma~5.2]{niyogiFindingHomologySubmanifolds2008}, we have:
		\begin{equation}\label{eq:packing_covering_rates}
			t
			= \Theta\left(|P_t|^{-\frac 1 d}\right)
			= \Theta\left(|C_t|^{-\frac 1 d}\right).
		\end{equation}
		Consider now the scale
		$
		t^*:= \inf\left\{ t>0 \mid |C_t|\le s_k \right\} = \inf \left\{
		t>0 \mid \exists S\subseteq X,\, |S|\le s_k,\, \dH(X,S)\le t \right\}.
		$
		In particular, as $X_{s_k}$ is a $\dH(X_{s_k},X)$-covering of $X$ with
		$|X_{s_k}| = s_k \le s_k$, we have:
		\begin{equation}\label{eq:tstar_lowerbound}
			t^* \le \dH(X_{s_k},X)\lesssim t_k,
		\end{equation}
		where the first inequality follows from the definition of $t^*$ and the second from~\cref{eq:dH_Xsk_X}.
		Now, as $t\mapsto |C_t|$ is non-decreasing, the definition of $t^*$ implies that, for any $\e>0$,
		$|C_{t^*+\e}|\le s_k$.
		By~\cref{eq:packing_covering_rates}, we have
			$t^*+\e = \Theta(|C_{t^*+\e}|^{-\frac{1}{d}})$, and therefore
			$t^* +\e \gtrsim s_k^{-\frac 1 d}$. Since the multiplicative constant in \cref{eq:packing_covering_rates} is
			independent of $k$, taking the infimum over $\varepsilon>0$ and applying \cref{eq:tstar_lowerbound} yields:
			\begin{equation}\label{equ:tstar_sk}
				s_k^{-1/d} \lesssim t^* \leq \dH(X_{s_k}, X) \leq t_k.
			\end{equation}
		However, we have $t_k = \mathrm o \left( s_k^{-\frac 1 d} \right)$, since:
		\begin{equation}
			s_k = \left\lceil \frac k {(\log k)^{1+\beta}}\right\rceil \le  \frac {2k} {(\log k)^{1+\beta}} \lesssim \frac
			{t_k^{-d}}{(\log k)^\beta }.
		\end{equation}
		This yields the desired contradiction for sufficiently large $k$ and sufficiently small $\e$.

\end{proof}

\begin{lemma}[Quasi-minimax bound on $\hat\delta_k$]\label{lemma:minimax_distance_sample}
	Under the same assumptions and notations as \cref{lemma:stats:packing_bootstrap},
	for any bounded and non-decreasing measurable function
	$\omega\colon \mathbb{R}_{\ge 0}\to \mathbb{R}_{\ge 0}$
	satisfying $\delta \underset{\delta\to 0 } = O(\omega(\delta))$, we have:
	\begin{equation}
		\Ebb \left[ \omega(\hat\delta_k) \right]
		\lesssim \omega \left[  2\left( \frac{2(\log k)^{2+\beta}}{ak} \right)^{\frac 1 b}\right]
		\quad
		\textnormal{ and }
		\quad
		\sup_{\mu \in \abstd X}\mathbb E \left( \omega(\hat\delta_k) \right)
		\gtrsim \omega \left[ \left(\frac{C}{k}\right)^{\frac 1 b} \right],
	\end{equation}
	for some constant $C>0$ depending only on $X$.
\end{lemma}
\begin{proof}
	\textbf{Proof of the upper bound.}
	Define $u_k := 2\left( 2\frac{(\log k)^{2+\beta}}{ak} \right)^{\frac 1 b}$. We start from the following upper bound:
	\begin{equation}\label{eq:stats:fdeltakupperbound}
		\Ebb \left[ \omega(\hat\delta_k) \right]
		=
		\Ebb \left[ \ind{\hat\delta_k \le u_k}\omega(\hat\delta_k)  \right]
		+
		\Ebb \left[ \ind{\hat\delta_k > u_k}\omega(\hat\delta_k) \right]
		\le
		\omega(u) +
		\left\lVert \omega  \right\rVert_\infty\cdot\Pbb \left( \hat\delta_k > u_k \right).
	\end{equation}
	Now, since ${\dH(X_{s_k},X_k)} \le \dH(X_{s_k},X)$, we have the following upper bound:
	\begin{equation}
		\Pbb \left( \hat\delta_k > u_k  \right)  =
		\Pbb \left( \dH(X_{s_k},X_k) > u_k  \right)
		\le
		\Pbb \left( \dH(X_{s_k},X) >  u_k \right).
	\end{equation}
	\cref{lemma:cuevas} guarantees that:
	\begin{equation}\label{eq:stats:hatdeltakprobupperbound}
		\Pbb \left( \hat\delta_k >  u_k\right)
		\le
		\frac {2^b}{a \left( \frac {2(\log k)^{2+\beta}}{ak} \right)}
		e^{-a\left\lceil \frac {k}{(\log k)^{1+\beta}}\right\rceil  \left( \frac{2(\log k)^{2+\beta}}{ak} \right)}
		\le  \frac{2^b}{k(\log k)^{2+\beta}} = o(u_k) = o(\omega(u_k)),
	\end{equation}
	and hence, up to some constant, we conclude combining
	\Cref{eq:stats:hatdeltakprobupperbound,eq:stats:fdeltakupperbound}:
	\begin{equation}
		\Ebb \left[ \omega(\hat\delta_k) \right]
		\lesssim
		\omega(u_k) + o(\omega(u_k))
		\lesssim
		\omega(u_k) = \omega \left[  2\left( \frac{2(\log k)^{2+\beta}}{ak} \right)^{\frac 1 b}\right].
	\end{equation}
		\textbf{Proof of the lower bound.} Pick $A_k$ from~\cref{eq:stats:def_ak_bk}.
		We have:
		\begin{equation}\label{eq:stats:ak_bound}
			\Ebb \left( \omega(\hat\delta_k) \right)
			=
			\Ebb \left[ \ind{A_k} \omega(\hat\delta_k)\right]
			+
			\Ebb \left[ \ind{\statcompl{A_k}}\omega(\hat\delta_k) \right]
			\ge
			\Pbb \left( \statcompl{A_k} \right)\Ebb \left[  \omega(\hat\delta_k)\mid \statcompl{A_k} \right]
			\gtrsim \Ebb \left[ \omega(2\dH \left( X_k,X \right)) \right],
		\end{equation}

		where the last inequality follows from the fact that $\omega$ is non-decreasing.

		Under the assumption that $X$ is a compact $d$-dimensional smooth manifold,
		the lower bound can be shown using a packing argument, similar to the one of
		\cref{lemma:stats:packing_bootstrap}.
		Consider the scale $t^*:= \inf\left\{ t>0 \mid |C_t|\le k \right\} = \inf \left\{
		t>0 \mid \exists S\subseteq X,\, |S|\le k,\, \dH(X,S)\le t \right\} $,
		where $C_t$ denotes a $t$-optimal covering set at scale $t$.
		Using the same argument as for~\cref{eq:tstar_lowerbound,equ:tstar_sk}, we have:
			\begin{equation}\label{eq:stats:packing_sk_lowerbound}
				\dH(X_{k},X)
				\geq t^*
				\gtrsim k^{-\frac{1}{d}}.
			\end{equation}
		Now, since $\omega$ is non-decreasing,
		there exists a constant $C$ such
		that:
		\begin{equation}
			\omega(2\dH \left( X_{k},X \right))
			\ge \omega \left[ \left(\frac{C}{k}\right)^{\frac 1 b} \right],
		\end{equation}
		which concludes the proof when combined with \Cref{eq:stats:ak_bound}.
\end{proof}

\begin{lemma}[{Le Cam, version from \cite[Lemma 20]{carriere2018statistical}}]\label{lecams_lemma}
	Let $\mathcal{P}$ be a set of probability distributions, and consider a map
	$\theta \colon \mathcal{P } \to (M,\rho)$, where $(M,\rho)$ is a pseudo metric space.
	If $X_k\sim P^{\otimes k}$ is $k$-sample of some distribution $P\in \mathcal{P}$,
	then for any two pair of distributions $P_0,P_1\in \mathcal{P}$,
	and $X_k$-measurable estimator $\hat \theta$ of $\theta(P)$, we have:
	\begin{equation*}
		\sup_{P\in \mathcal{P}} \Ebb _{X_k\sim P^{\otimes k}} \left( \rho(\theta(P),
		\hat\theta) \right)
		\ge \frac 1 8
		\rho \left( \theta(P_0),\theta(P_1) \right) \left[ 1 - \mathrm{TV}(P_0,P_1) \right]^{2k},
	\end{equation*}
	where $\mathrm{TV}(P_0,P_1)$ is the total variation between $P_0$ and $P_1$,
	defined as:
	\begin{equation*}
		\mathrm{TV}(P_0,P_1) := \sup_{A \textnormal{ measurable}} |P_0(A)-P_1(A)|.
	\end{equation*}
\end{lemma}

\section{From Convexity to Contractibility}\label{sec:contractible_proof}
This section is devoted to proving the following result:
\begin{theorem}\label{thm:convex_to_contractible}
Let $(X,d_X)$ be a compact geodesic metric space. Suppose $A \subset X$ is geodesically convex, in the sense that for every pair of points $x,y \in A$, the shortest path in $X$ that connects $x$ to $y$ is unique and included in $A$. Then $A$ is contractible.
\end{theorem}

The proof relies on the following version of the Arzel\`a--Ascoli Compactness Theorem---see~\cite[Theorem~2.5.14]{burago2001course}.
\begin{theorem}[Arzel\`a--Ascoli Compactness Theorem]\label{thm:arzela-ascoli}
  In a compact metric space~$X$, any sequence of continuous paths $[0,1]\to X$ that are parametrized with constant speed and that have uniformly bounded lengths contains a subsequence that is
    convergent in the topology induced by the supremum distance
\(\displaystyle
d_\infty(\gamma,\gamma'):=\sup_{t\in[0,1]} d_X(\gamma(t),\gamma'(t)).
\)
\end{theorem}

\begin{proof}[Proof of~\cref{thm:convex_to_contractible}]
Fix a point $a_0\in A$. It suffices to construct a homotopy
\[
H:A\times[0,1]\to A
\]
such that $H(x,0)=x$ and $H(x,1)=a_0$ for all $x\in A$.

\smallskip
For every $x\in A$, let $\gamma_x:[0,1]\to X$ denote the unique shortest path from $x$ to $a_0$, parametrized with constant speed.  
Define
\[
H(x,t):=\gamma_x(t),\qquad (x,t)\in A\times[0,1].
\]
The identities $H(x,0)=x$ and $H(x,1)=a_0$ hold by construction.  
It remains to prove that $H$ is continuous.

Let $C([0,1],X)$ be the space of continuous maps $[0,1]\to X$ equipped with the supremum metric.
Take a sequence $(x_n)_{n\in\mathbb{N}}$ in $A$ with $x_n\to x \in A$, and set $\gamma_n:=\gamma_{x_n}$.  
%
Since $X$ is compact, each $\gamma_n$ satisfies $L(\gamma_n)\leq \diam(X)$, so,
by~\cref{thm:arzela-ascoli}, there exists a subsequence $(\gamma_{n_k})_{k\in\NNN}$ that converges
to some $\gamma_\infty\in C([0,1],X)$.
Since the endpoints converge: $\gamma_{n_k}(0)=x_{n_k}\to x$ and $\gamma_{n_k}(1)=a_0$, we have: 
\[
\gamma_\infty(0)=x,\qquad \gamma_\infty(1)=a_0.
\]
By lower-semicontinuity of the length structure induced by a metric~\cite[Prop.~2.3.4]{burago2001course}, we deduce: 
\[
L(\gamma_\infty)\le\liminf_{k\to\infty}L(\gamma_{n_k})
=\liminf_{k\to\infty} d_X(x_{n_k},a_0)
=d_X(x,a_0).
\]
Since $d_X(x,a_0)\le L(\gamma)$ for any path~$\gamma$ joining $x$ to $a_0$, we obtain:
\[
L(\gamma_\infty)=d_X(x,a_0),
\]
so $\gamma_\infty$ is a shortest path from $x$ to $a_0$.  
As $A$ is geodesically convex, such a path is unique, therefore
\[
\gamma_\infty=\gamma_x.
\]

Similarly, every
convergent subsequence of $(\gamma_{x_n})_{n\in\NNN}$ converges
to $\gamma_x$.

\medskip
Now, recall that, if every subsequence of a sequence in a metric space admits a further subsequence converging to the same limit, then the whole sequence itself converges to that limit.  
Thus,
\[
d_\infty(\gamma_{x_n},\gamma_x) \xrightarrow[n\to \infty]{} 0.
\]
In particular, the map
\[
G\colon A\to C([0,1],X),\qquad x\mapsto \gamma_x
\]
is continuous. Then, given any converging sequence $(x_n,t_n)\to(x,t)$ in $A\times[0,1]$, the triangle inequality yields:
\begin{align*}
d_X\bigl(H(x_n,t_n),H(x,t)\bigr)
&=d_X(\gamma_{x_n}(t_n),\gamma_x(t))\\
&\le d_X(\gamma_{x_n}(t_n),\gamma_x(t_n))
+ d_X(\gamma_x(t_n),\gamma_x(t))\\
&\le d_\infty(\gamma_{x_n},\gamma_x)
+ d_X(\gamma_x(t_n),\gamma_x(t)),
\end{align*}
where both terms go to~$0$ as $n\to\infty$ because, on the one hand,  $G$ is continuous, and on the other hand, $\gamma_x$ itself is continuous.  Therefore, $H(x_n,t_n)\to H(x,t)$, proving that $H$ is continuous.

In conclusion, $H$ is a homotopy from $\mathrm{id}_A$ to the constant map at $a_0$, therefore $A$ is contractible.
\end{proof}

\end{document}